\let\mathbb\mathds 
\numberwithin{equation}{section} 
\newtheorem*{theoremnn}{Theorem}   
\newtheorem{theorem}{Theorem}[section]
\newtheorem{definition}[theorem]{Definition}
\newtheorem{proposition}[theorem]{Proposition}
\newtheorem{lemma}[theorem]{Lemma}
\newtheorem{corollary}[theorem]{Corollary}
\newtheorem{remark}[theorem]{Remark}
\def\Card{\operatorname{Card}}
    \newlength{\myarrowsize} 
    \newlength{\myoldlinewidth}
\tikzstyle{vecArrow} = [thick, decoration={markings,mark=at position
\tikzstyle{innerWhite} = [semithick, white,line width=1.4pt, shorten >= 4.5pt]
	\newcommand\POSITION[3]{%
	\begingroup
	\@tempdim@x=0cm
	\@tempdim@y=\paperheight
	\advance\@tempdim@x#1
	\advance\@tempdim@y-#2
	\put(\LenToUnit{\@tempdim@x},\LenToUnit{\@tempdim@y}){#3}%
	\endgroup
	}
\begin{document}

\title[Canonical form in $\tilde{A}_n$, $\tilde{B}_n$, $\tilde{D}_n$]{Canonical reduced expression in  affine Coxeter groups of 
  type $\tilde{A}_n$, $\tilde{B}_n$, $\tilde{D}_n$}  

\author{Sadek Al Harbat}

\address{ORCID 0009-0006-2970-9795
\\ School of Mathematics, University of Leeds, Woodhouse Lane, Leeds 
LS2 9JT,   United Kingdom}
 
\email{sadekalharbat@gmail.com, S.Alharbat@leeds.ac.uk}

	\begin{abstract} We classify the elements of $W(\tilde{A}_n)$ by giving a  canonical reduced expression for each,  using basic tools among which affine length. We give some direct consequences for such a canonical form: a description of left multiplication by a simple reflection, a study  of the right descent set,  and a proof that the affine length is preserved along the tower of affine Coxeter groups of type $\tilde A$, which implies in particular that the corresponding tower of affine Hecke algebras is a faithful tower regardless of the ground ring. We give a similar canonical reduced expression for the elements of $W(\tilde{B}_n)$ and $W(\tilde{D}_n)$.
\end{abstract}

\date{\today}
\subjclass[2010]{Primary 20F55, Secondary 05E16, 20C08.}

 	\maketitle
	
	\keywords{Affine Coxeter groups; reduced expressions; right and left descent sets; towers of Hecke algebras.}

\section{Introduction}  

\subsection{ } Coxeter systems and related topics (such as Hecke algebras and their quotients, K-L polynomials and the new born: Light leaves) take a place in the heart of representation theory. Reduced expressions  are the salt of such systems: Almost every related object is defined starting from a reduced expression or reduced to a reduced expression explanation, especially and not surprisingly objects which are "independent" from reduced expressions! Such as: Hecke algebras bases and Bruhat order. One may bet that no work concerning/using Coxeter group theory is reduced-expression free. A {\em canonical reduced expression} for elements in the infinite families of finite Coxeter groups has been known  for a while, we refer to \cite{St} to see an easy explication of such canonical expressions. 

Our primary focus here is on the group $W(\tilde A_{n}) $,  a famous extension of the symmetric group $W(A_n)$, known to be the first "group".  Indeed   $W(A_n)$ is the $A$-type Coxeter group with $n\ge 1 $ generators $\{ \sigma_ 1 , \sigma_2,  \dots \sigma_n\}$ (AKA Sym$_{n+1}$). Let $\lfloor i,j \rfloor = \sigma_i \sigma_{i+1} \dots \sigma_j $ for  $1 \le i \le j \le n$. One of the very basic results is: 

\begin{theoremnn}
    $W(A_n)$ is the set of elements of the following canonical reduced form: 
 \begin{equation}\label{1.1}
 \lfloor i_1, j_1 \rfloor \lfloor i_2, j_2 \rfloor \dots  \lfloor i_s, j_s \rfloor  
 \end{equation}
with $n \ge j_1 > \dots > j_s  \ge 1$ and 
$ j_t \ge i_t \ge 1$ for $s \ge t \ge 1$. Identity is to be considered the case where $s=0$.
\end{theoremnn}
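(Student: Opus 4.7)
The plan is to argue by induction on $n \ge 1$, using the parabolic embedding $W(A_{n-1}) = \langle \sigma_1, \ldots, \sigma_{n-1}\rangle \hookrightarrow W(A_n)$. Under the identification $W(A_n) \cong \mathrm{Sym}_{n+1}$ in which $\sigma_k$ is the transposition $(k, k+1)$, this subgroup is precisely the stabilizer of $n+1$. The base case $n = 1$ is immediate: $W(A_1) = \{e, \sigma_1\}$ matches the two admissible data $s = 0$ and $s = 1,\, i_1 = j_1 = 1$.

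For the inductive step, the central geometric fact is the left coset decomposition
\[
W(A_n) \;=\; W(A_{n-1}) \;\sqcup\; \bigsqcup_{i=1}^{n} \lfloor i, n\rfloor \cdot W(A_{n-1}),
\]
which I would verify by noting that $\lfloor i, n\rfloor$ acts as the cycle $i \mapsto i+1 \mapsto \cdots \mapsto n+1 \mapsto i$, so in particular $\lfloor i, n\rfloor(n+1) = i$. Since a left coset $wW(A_{n-1})$ is determined by $w(n+1) \in \{1, \ldots, n+1\}$, the $n+1$ listed representatives exhaust all cosets.

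The real technical content is the length additivity
\[
\ell\bigl(\lfloor i, n\rfloor \cdot w'\bigr) = (n - i + 1) + \ell(w') \qquad \text{for every } w' \in W(A_{n-1}),
\]
which I would prove by a direct inversion count: inversions of $w := \lfloor i, n\rfloor \cdot w'$ with both indices in $\{1, \ldots, n\}$ biject naturally with the inversions of $w'$, whereas inversions of the form $(k, n+1)$ correspond precisely to the indices $k \le n$ with $w'(k) \ge i$, of which there are exactly $n - i + 1$. This is where the bulk of the case-checking lives and is the main technical obstacle; alternatively one can deduce it from the general theory of minimal length coset representatives in Coxeter parabolic quotients.

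Given this additivity, existence and uniqueness drop out cleanly. If $w(n+1) = n+1$ then $w \in W(A_{n-1})$ and induction supplies a canonical form with $j_1 \le n-1$; otherwise set $i_1 := w(n+1) \in \{1, \ldots, n\}$ and $w' := \lfloor i_1, n\rfloor^{-1} w \in W(A_{n-1})$, then prepend $\lfloor i_1, n\rfloor$ to the canonical form of $w'$ produced by induction to obtain the desired expression with $j_1 = n > j_2$. Uniqueness follows because $w(n+1)$ pins down both whether $j_1 = n$ and, in that case, the value of $i_1$; then $w'$ and its canonical form are forced by induction. The length additivity guarantees the concatenation is reduced, and a reassuring count of admissible data gives $\prod_{j=1}^{n}(1 + j) = (n+1)! = |W(A_n)|$, confirming the bijection between canonical forms and group elements.
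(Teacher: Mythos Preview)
Your proof is correct and follows essentially the same approach as the paper: induct along the parabolic chain $W(A_{n-1}) \subset W(A_n)$, using that $\{\lfloor r, n\rfloor : 1 \le r \le n+1\}$ is the set of distinguished coset representatives with length additivity. The paper simply cites this as well-known (via Theorem~\ref{para} and the canonical factorization~\eqref{StemFactorization}), whereas you supply a hands-on inversion-count argument in the symmetric-group realization; both routes are standard.
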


This is equivalent to saying that the distinguished representatives of the cosets in $W(A_n)/W(A_{n-1})$
are the  elements $1$ and $\lfloor r, n \rfloor$ for $1\le r \le n$.  \\

In this work we give an analogue of this assertion for the infinite affine Coxeter group  $W(\tilde A_{n}) $. 
More precisely: we give a  canonical reduced expression  for the elements of  this group, with  a full set of the distinguished coset representatives 
of $W(\tilde A_{n}) /W(A_{n}) $. Then we give some examples of direct consequences of this classification by canonical forms.

We also provide below a canonical reduced expression for elements of $W(\tilde{B}_n)$ and $W(\tilde{D}_n)$. We will give elsewhere    a similar canonical reduced expression for elements of $W(\tilde{C}_n)$,   together with an important application to Markov trace.
\\ 

\subsection{ }  The key word (and almost everywhere used creature in this work) is {\em affine length} (Definitions \ref{ALA}, \ref{ALB}, \ref{ALD}): for  $n\ge 2 $ we let  $S_n=\{ \sigma_ 1 , \sigma_2,  \dots \sigma_n, a_{n+1}\}$ be the set of Coxeter generators of $W(\tilde A_{n}) $, then the {\it affine length} of an element $w \in W(\tilde A_{n})$ is the minimal number of occurrences of $a_{n+1}$ in all expressions of $w$,  which we denote by $L(w)$.    
 We emphasize the unusualness of our notation, which may be disturbing at first: among the generators of the affine Coxeter group $W(\tilde A_{n})$ we choose once and for all an ``affinizing'' element that we denote by $a_{n+1}$. 
We are aware of the traditional notation, that would be a sigma indexed by  ${n+1}$,  but our present notation is better suited to our 
goals, in particular  to the tower point of view of section \ref{Arr} (see also the computations of traces on the tower of Temperley-Lieb algebras in \cite{al2015markov}).

\medskip

  We let  $$h(r,i)   =   \sigma_r \sigma_{r+1} \dots \sigma_n \sigma_i \sigma_{i-1} \dots \sigma_1$$ for $ 1\le i \le n-1$, $1 \le  r \le n$, with obvious extension to $r=n+1$ or $i=0$, see \S \ref{affA}.  The set of distinguished representatives of the right $W(A_n)$-cosets  of affine length $1$ is the set of elements  given by the reduced expressions 
$$\mathcal B(r,i)=  h(r, i) a_{n+1}, \quad  0\le i \le n-1,   \   1 \le  r \le n+1$$    (Lemma \ref{Rofw}). We call such expressions 
{\it affine bricks}.
 The main result of this work is Theorem \ref{AA}, of which we give a shortened version as follows: 

\begin{theorem}\label{11Intro}
     Any distinguished representative   $w$ of    $\, W(\tilde A_{n}) /  W(A_n) $ has a     unique canonical reduced expression:   
\begin{equation}\label{forreferencebelow}
\mathbf{w_a}= \mathcal B(j_1, i_1) \mathcal B(j_2, i_2)   \dots  \mathcal B(j_m, i_m)   \end{equation}
where $m$ is the affine length of $w$ and $(j_s,i_s)_{1\le s \le m}$ is a family of integers satisfying the following {\em pairwise inequalities}:  
\begin{itemize}
\item $ 1\le  j_1 \le n+1$ and  $ 0\le i_1 \le n-1$;     for $ 2\le  s \le m $,   either $i_s=0$ and $j_s=1$, or $ 1\le  i_s \le n-1$ and
$ 1\le  j_s \le n$; 
\item  the sequence $(j_k)$ (resp. $i_k$) is non-increasing (resp.  non-decreasing); 
\item for $ 2\le  s \le m $, if $j_{s-1} > i_{s-1}+1 $, then   $j_s < j_{s-1}   $;  if  
$j_{s} > i_{s}+1 $ then $i_{s} > i_{s-1} $.  
\end{itemize}

Vice versa, any such  family $(j_s,i_s)_{1\le s \le m}$     determines
by (\ref{forreferencebelow})  a distinguished representative 
 $w$    of    $W(\tilde A_{n})  /  W(A_n) $, in  reduced form, of affine length $m$. We call the very  expression 
$ \mathbf{w_a}:= \mathcal B(j_1, i_1) \mathcal B(j_2, i_2)   \dots  \mathcal B(j_m, i_m) $ {\em    the   affine block}  of  any element in  $wW(A_n)$.  

\end{theorem}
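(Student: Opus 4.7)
The plan is to prove both directions by strong induction on the affine length $m$. The base case $m=0$ is the identity, and $m=1$ is Lemma \ref{Rofw}, which identifies the distinguished representatives of affine length one with the affine bricks. The central structural step is that any distinguished coset representative $w$ of affine length $m \ge 1$ admits a factorisation $w = v \cdot \mathcal{B}(j_m, i_m)$ in which $v$ is itself a distinguished representative of affine length $m-1$. To see this, take any reduced expression for $w$ and isolate the rightmost occurrence of $a_{n+1}$: everything to its right lies in $W(A_n)$, and because $w$ is distinguished within $w W(A_n)$, this tail is forced to be a descent chain $\sigma_i \sigma_{i-1} \cdots \sigma_1$ with the complementary $\sigma_r \cdots \sigma_n$ piece sitting immediately to the left of $a_{n+1}$. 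This peels off the last affine brick and isolates $v$, to which the inductive hypothesis applies.

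With this factorisation in hand, the theorem reduces to a local analysis: for which pairs of consecutive bricks $\mathcal{B}(j,i) \mathcal{B}(j',i')$ is the concatenation a reduced expression of affine length exactly two, still representing a distinguished coset? By direct manipulation inside $W(\tilde A_n)$, using the braid relations among the $\sigma_k$'s and the commutations of $a_{n+1}$ with the $\sigma_k$ for $k \notin \{1,n\}$, I would show that the pairs where reducedness fails, where affine length drops, or where distinguishedness fails are precisely those violating one of the three bulleted inequalities. The conditions on the full sequence are equivalent to every adjacent pair satisfying these local conditions. Existence of a canonical form then follows by an iterative rewriting: whenever an inequality is violated, the corresponding local move either strictly decreases a lexicographic invariant on the pairs $(j_s, i_s)$ or lowers the affine length, contradicting $L(w)=m$ and hence excluded. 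Uniqueness comes from applying the inductive hypothesis to $v$, combined with the fact that two different valid last bricks produce $W(A_n)$-cosets with distinct distinguished representatives.

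For the converse, assuming the pairwise inequalities I would verify that the ordinary Coxeter length of $\mathcal{B}(j_1,i_1) \cdots \mathcal{B}(j_m,i_m)$ equals the sum of the individual brick lengths, certifying reducedness; the affine length equals $m$ by construction, and distinguishedness of $w$ within $w W(A_n)$ follows from the shape of the last brick, whose tail $\sigma_{i_m} \cdots \sigma_1$ prevents any further right descent into $W(A_n)$. The main obstacle I expect is the combinatorial case analysis at the brick-brick junction. The asymmetric boundary cases $i=0$ and $j=n+1$ must be treated separately from the generic ones, and the distinction between \emph{long} bricks ($j \le i+1$) and \emph{short} bricks ($j > i+1$) triggers different simplification patterns, which is exactly why the inequalities split into a generic clause and a conditional clause guarded by $j_{s-1} > i_{s-1}+1$. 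Crucially, a naive commutation at such a junction can preserve the ordinary length while dropping the affine length $L(w)$, so bookkeeping with affine length, not merely with Coxeter length, is essential throughout the argument.
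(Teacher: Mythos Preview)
Your overall induction-on-$m$ scaffold matches the paper's, and your rewriting argument for existence (whenever a pairwise inequality fails, apply a local move) is precisely Lemma~\ref{exchangeformulas} and Proposition~\ref{moreconditions}. But there are two concrete problems.

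First, the factorisation step is misdescribed. If $w$ is a distinguished representative of $W(\tilde A_n)/W(A_n)$ with $L(w)\ge 1$, then $\mathscr{R}(w)=\{a_{n+1}\}$, so \emph{every} reduced expression of $w$ ends in $a_{n+1}$ and the tail to the right of the last $a_{n+1}$ is empty, not a descent chain $\sigma_i\cdots\sigma_1$. The piece $h(j_m,i_m)$ you want sits to the \emph{left} of that $a_{n+1}$ and must be extracted via Lemma~\ref{extremal1}, not read off from a tail.

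Second, and more seriously, the converse direction is where the real work lies, and your plan does not cover it. Given a family $(j_s,i_s)$ satisfying the pairwise inequalities, you assert that reducedness follows by ``verifying the Coxeter length equals the sum'' and that distinguishedness ``follows from the shape of the last brick.'' Neither holds without substantial argument. The paper's own examples at~(\ref{braidsleftright}) show that a two-brick product $\mathcal{B}(j_1,i_1)\mathcal{B}(j_2,i_2)$ in perfectly good canonical shape can have $\sigma_1$ or $\sigma_n$ in its right descent set; the shape of the last brick alone prevents nothing. Moreover, a purely local brick-junction analysis is not enough: in the paper's inductive step (\S\ref{maincomputation}), when one assumes $w_m a_{n+1}$ is not reduced, the hat partner of the rightmost $a_{n+1}$ is forced by induction to lie in the \emph{leftmost} brick $h(j_1,i_1)a_{n+1}$, and ruling this out requires propagating a reflection computation through the whole block. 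The tools that make this work are Bourbaki's reflection criterion (Lemma~\ref{Bourbaki}), the Rigidity Lemma~\ref{rigidity}, and the computational Lemmas~\ref{casem2reduced} and~\ref{SubcaseA1}. Your proposal contains no analogue of these, and without them the step ``$v$ distinguished and $(j_{m-1},i_{m-1}),(j_m,i_m)$ satisfy the inequalities $\Rightarrow$ $v\cdot\mathcal{B}(j_m,i_m)$ distinguished'' is exactly the statement to be proved, not a consequence of a local check.
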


  The proof  establishes in an explicit, algorithmic and independent way the existence of such representatives of minimal length,  given in    canonical form. Appending on the right of an affine block a canonical reduced expression for an element of 
$W(A_n)$ provides a canonical reduced expression for any element in $W(\tilde A_{n})$.   We note that   the lengths of the successive affine bricks in a given affine block form a non-decreasing sequence with first terms  increasing  strictly  up to $n$, and that two of those bricks have the same length if and only if they are identical.    \\

Occasionally in this work, as we just did in Theorem \ref{11Intro}, 
we use a boldface letter to denote an expression: by definition, the affine block $\mathbf{w_a}$ is an expression, whereas $w_a$ designates the corresponding element of $W(\tilde A_{n})$. Most of the time though, we use the same notation for an expression and the corresponding element, for the sake of simplicity. We believe that this will cause no ambiguity.

\subsection{ }   We pause here  to  thank the referee of the {first} version of this paper who pointed out similarities with section 3.4 in the book \cite{BB} by Bj\"orner and Brenti  on the  one hand, and with the paper \cite{Yilmaz} by  Yilmaz,  \"Ozel, and Ustao\u{g}lu on the other hand. Therefore we studied those references. 

After getting into   the context and language of Gr\"obner-Shirshov bases in   \cite{Yilmaz}, it turns out that the canonical form in Theorem \ref{AA} below is indeed the one given in {\it loc.cit.}  up to taking inverses. Yet, in our work, the single set of parameters is simpler (to read and to use) than the artificially separated parameters  $u$, $v$ and $uv$ in {\it loc.cit.};  the proofs give more insight into  the Coxeter group structure of  
$W(\tilde A_n)$    ({\it loc.cit.} relies on a counting argument); some intermediate calculations are also efficient when working  on consequences. In addition, the present paper also provides canonical forms in types $\tilde B$ and $\tilde D$,  and  type $\tilde C$ will quickly follow.

 We turn to the normal form whose existence and uniqueness are established in  \cite[\S 3.4]{BB}, after du Cloux's monograph \cite{duCloux_X}, for any Coxeter group : it is   the {\it lexicographically first reduced word}, in short the {\it left lex-min form}, for a given order on the set $S$ of generators, hence written 
$S=\{ s_1, \cdots, s_{n+1}\}$ (implicitly and conventionally the lexicographic comparison starts on the left of the word and proceeds from left to right). As observed by Stembridge in \cite[p.1288]{St} (citing Edelman), the normal form 
(\ref{1.1})  for elements of $W(A_n)$   is the reverse, 
  i.e. from right to left,  lexicographically first reduced word, in short the {\it right lex-min form}.   It is  easy to check that  {\em our canonical form is the right lex-min form  for any numbering   $\{ s_1, \cdots, s_{n+1}\}$ of $\{ \sigma_1, \cdots, \sigma_n, a_{n+1}\}$ such that   $s_{n+1}=a_{n+1}$, 
$s_n=\sigma_n$ and $s_{n-1}= \sigma_1$.} 

{Our form depends on the choice of   the "affinizing" generator $a_{n+1}$: we force occurrences of $a_{n+1}$ to be minimal and leftmost. }By the previous statement, this implies right-lexicographic minimality (we also  order  the two neighbours of $a_{n+1}$ in the Dynkin diagram -- the effect of this choice is mild, changing it amounts to applying  rules (\ref{productsof2Legobricks})).

Now we make an important remark.  In \cite{BB} existence and uniqueness 
of the normal form are a direct consequence of the existence and uniqueness of a minimal element for the lexicographic order. In the present paper, the existence of a form (\ref{forreferencebelow})
  for a distinguished representative of   $W(\tilde A_n)/ W(A_n)$  is easy, 
{
but more work has to be done to show that the pairwise inequalities are sufficient   conditions for such a form to be of minimal length and reduced. } Getting the general form (\ref{forreferencebelow}),   a product of affine bricks,  from \cite{BB} 
{is} easy, 
but the pairwise inequalities cannot be deduced from  there.

To end this interlude, we thank Bill Casselman for providing us with a copy of \cite{duCloux_X} (see \S \ref{basics} below), for drawing a path for us in the story of normal forms, which developed in the nineties with works of Fokko du Cloux and Bill Casselman, in particular   \cite{duCloux_X,Casselman,duCloux_transducer},     and for  pointing out the importance of the result of Brink and Howlett     that Coxeter groups are automatic \cite{BrinkHowlett}. \\ 

\subsection{ } We give three direct consequences of the canonical form. As a  {\bf first consequence},  we show that  through left multiplication by a simple reflection in $S_n$, the canonical form behaves exactly as wished! In other terms: the change made by  left multiplication by a simple reflection is very localized, it happens in at most one affine brick  of the affine  block in such a way that we get a canonical form directly, without passing by the algorithm. This is    Theorem~\ref{lefttimes}, to which we refer  for more detailed statements : 
  
  \begin{theoremnn}[Theorem~\ref{lefttimes}]\label{lefttimesintro}   
Let $ \ \mathbf{w_a}=\mathcal B(j_1, i_1) \mathcal B(j_2, i_2)   \dots  \mathcal B(j_m, i_m)   \  $ be an affine block  of affine length $m\ge 1$, let $w_a$ be the corresponding element of  $W(\tilde A_{n}) $ and let 
$s$ be in $S_n$. Then: 
\begin{enumerate}
\item either $s  w_a$  cannot be expressed by  an affine block, and we have actually 
$l(s w_a)= l(   w_a)+1$  and  $s   w_a=   w_a\sigma_v$ for some $v$, $1\le v \le n$; 

\item  or $s w_a$ has a reduced expression that is an affine block  $ \mathbf{w'_a}$  and,  other than the obvious two cases when  $s=a_{n+1}$ with $ h(j_1, i_1) $ trivial or extremal, the two affine blocks    $  \mathbf{w'_a}$ and $ \mathbf{w_a}$ differ  in one and only one $h(j_s, i_s)$ and one and only one entry there, say   
 $j'_s\ne j_s$ or $i'_s\ne i_s$.   
If $ l(s  w_a)= l( w_a) +1$ we have  $j'_s =j_s-1$ or $i'_s=i_s+1$, while if 
$ l(sw_a)= l( w_a) -1$ we have $j'_s =j_s+1$ or $i'_s=i_s-1$. 
\end{enumerate}
\end{theoremnn}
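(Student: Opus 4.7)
The plan is to compute $s \cdot \mathbf{w_a}$ by propagating $s$ from the left through the sequence of bricks using only Coxeter relations, until it is absorbed into exactly one brick or emerges on the right as some $\sigma_v$.

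First, I compute the interaction of each generator $s \in S_n$ with a single brick $\mathcal B(r,i) = \sigma_r \sigma_{r+1} \cdots \sigma_n \sigma_i \sigma_{i-1} \cdots \sigma_1 a_{n+1}$. For $s = \sigma_k$ there are four mutation patterns visible directly from the Coxeter relations: $k = r-1$ gives $\sigma_{r-1}\mathcal B(r,i) = \mathcal B(r-1,i)$ by simple prepending (length $+1$); $k = r$ gives $\sigma_r\mathcal B(r,i) = \mathcal B(r+1,i)$ by cancellation (length $-1$); $k = i+1 < r-1$ gives $\sigma_{i+1}\mathcal B(r,i) = \mathcal B(r,i+1)$ via a braid move at the junction (length $+1$); and $k = i$ gives $\sigma_i\mathcal B(r,i) = \mathcal B(r,i-1)$ by cancellation in the increasing half (length $-1$). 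In the remaining positions, the commutation relations $\sigma_k\sigma_\ell = \sigma_\ell\sigma_k$ for $|k-\ell|\ge 2$ let $\sigma_k$ commute past the brick, possibly reappearing as a different generator $\sigma_{k'}$ after a braid move with $a_{n+1}$ when $k \in \{1,n\}$. For $s = a_{n+1}$, the only non-trivial relations are with $\sigma_1$ and $\sigma_n$, so $a_{n+1}$ either prepends a new trivial brick $\mathcal B(1,0)$ when the first brick is non-extremal, or triggers one of the obvious exceptional cases of the statement.

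Next, I propagate $s$ through the whole block: if $s$ commutes past $\mathcal B(j_1,i_1)$, it lands on the left of $\mathcal B(j_2,i_2)$ (possibly as some $\sigma_{k'}$), where the same case analysis is repeated. By induction on the brick index, either $s$ is absorbed by exactly one brick $\mathcal B(j_t,i_t)$ producing a single-entry mutation, or $s$ commutes past every brick and emerges on the right as $\sigma_v$, giving case (1) with $sw_a = w_a\sigma_v$ and $\ell(sw_a) = \ell(w_a)+1$ since no brick has been shortened.

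Finally, I verify that the mutated sequence $(j_1,i_1),\ldots,(j'_t,i'_t),\ldots,(j_m,i_m)$ still satisfies the pairwise inequalities of Theorem \ref{AA}, so that the result is a genuine affine block; the claimed length change is then read off directly from which of the four mutation patterns applied. The main obstacle will be twofold: the case $s = a_{n+1}$, whose braid relations with both $\sigma_1$ and $\sigma_n$ make its passage across the two ends of a brick genuinely delicate; and the combinatorial verification that the mutated block still respects the strict implication \emph{if $j_t > i_t+1$ then $j_{t+1} < j_t$} (and its $i$-analogue) from Theorem \ref{AA}, in particular when the mutation puts $j_t$ or $i_t$ exactly at the borderline of that inequality, forcing a potential cascade that the statement nonetheless insists remains local.
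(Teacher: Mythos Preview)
Your approach is essentially the same as the paper's: a single-brick automaton (your first paragraph is the content of Lemma~\ref{automaton}) followed by an induction along the block. But there are two concrete problems.

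\textbf{The single-brick analysis is incomplete and partly wrong.} You list four mutation patterns for $\sigma_k$, but you miss the cases where $k>r$ and $k-1\in\{i,i+1\}$ (items (h) and (i) of Lemma~\ref{automaton}): after $\sigma_k$ slides through $\lfloor r,n\rfloor$ it becomes $\sigma_{k-1}$ and can then hit the $\lceil i,1\rceil$ segment, producing the same $i$-mutations via a different entry point. More seriously, your description of $s=a_{n+1}$ is backwards. The brick $\mathcal B(1,0)=\sigma_1\cdots\sigma_n a_{n+1}$ is not trivial; the length-one brick is $\mathcal B(n+1,0)=a_{n+1}$. And it is when $h(j_1,i_1)$ is \emph{extremal} that $a_{n+1}$ prepends, not when it is non-extremal. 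In the non-extremal non-trivial case (Lemma~\ref{automaton} items (l),(m)) one has $a_{n+1}\mathcal B(j_1,i_1)=\mathcal B(j_1,i_1)\sigma_n$ or $\mathcal B(j_1,i_1)\sigma_1$, i.e.\ $a_{n+1}$ \emph{passes through} as $\sigma_n$ or $\sigma_1$ and the argument reduces to the $\sigma_k$ case already treated.

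\textbf{The pairwise-inequality check is the real gap.} You correctly flag it as the main obstacle, but you do not resolve it, and a direct combinatorial verification will not work cleanly: when a length-decreasing mutation produces, say, $j'_t=j_t+1$, there is no a~priori reason the pair $(j_{t-1},i_{t-1}),(j'_t,i_t)$ still satisfies Definition~\ref{pairwise}. The paper does \emph{not} verify this directly. Instead it uses Deodhar's lemma (Lemma~\ref{Soergel}): if $\ell(sw_a)<\ell(w_a)$ then $sw_a$ is automatically still a distinguished representative of $W(\tilde A_n)/W(A_n)$, so $\mathscr R(sw_a)=\{a_{n+1}\}$; were the pairwise inequalities violated, Lemma~\ref{exchangeformulas} would rewrite the last two bricks with a trailing $\sigma_1$ or $\sigma_n$ in a same-length (hence reduced) expression, contradicting $\mathscr R(sw_a)=\{a_{n+1}\}$. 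Symmetrically, in the length-increasing case where the inequalities would fail, Lemma~\ref{exchangeformulas} exhibits a $\sigma_t\in\mathscr R(sw_a)$, and then Deodhar forces $sw_a=w_a\sigma_t$, which is exactly case~(1). So there is no cascade: the potential failure of the inequalities is precisely what characterises case~(1). This is the missing idea in your sketch.
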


 This theorem is telling that the canonical form is somehow "stable" by left multiplication by an $s\in S_n$ up to a change in at most one $i_s$ or one $j_s$, but words are but finite sequences of generators! So the canonicity is not bothered by the left multiplications!   
Actually, after getting acquainted with Fokko du Cloux's work as explained above, we saw the similarity of this statement with Theorem 2.6 in \cite{duCloux_transducer}, changing left to right  (see Theorem \ref{lefts} below). We chose to leave our statement unchanged with its direct proof, instead of deducing it, however easily, from {\it loc.cit}, because our   proof includes in fact an  automaton to deal with left multiplication of an affine brick, see Lemma  \ref{automaton}.  Even more important, our proof  controls  the path, i.e. the sequence of braid relations,  leading from $ \mathbf{sw_a}$ to  $  \mathbf{w'_a}$, which is essential in an application to light leaves under way.

  While for the {\bf  second consequence}:
in section \ref{Rds} devoted to right multiplication,  we compare the descent set $\mathscr{R} (w)$ of $w$ with the descent set $\mathscr{R} (x)$ of $x$, where $w=w_ax$, $x$ in $W(A_n)$,  and $w_a$ has   the affine block $ \mathbf{w_a}$  of $w$ as a reduced expression. We   have either  $\mathscr{R} (w)= \mathscr{R} (x)$ or   $\mathscr{R} (w) = \mathscr{R} (x) \cup  \{ a_{n+1}  \}  $.  
We give sufficient conditions on $w$ for $a_{n+1}$ to belong to  $\mathscr{R} (w)$, together with  the {\it hat partner} (see \ref{Bourbaki}) of $a_{n+1}$ multiplied from the right when the multiplication decreases the length.  The cases
of affine length   $ 1$ and $ 2$  are fully described.\\

A {\bf  third consequence} is to  show that the affine length is preserved in the tower of affine groups defined in \cite{al2016tower}, that is: When seeing $W(\tilde A_{n-1})$ as a reflection subgroup of $W(\tilde A_{n})$ via the monomorphism:
\begin{eqnarray}
				R_{n}: W(\tilde A_{n-1} ) &\longrightarrow& W(\tilde A_{n} ),\nonumber
							\end{eqnarray}
that sends $\sigma_i$ to $\sigma_i$ for $1\le i \le n-1$ and $a_n$ to $\sigma_n a_{n+1}\sigma_n$. Indeed   a canonical reduced expression of $(n-1)$-rank is sent to an explicit canonical reduced expression of $(n)$-rank, preserving the affine length: 

\begin{theoremnn}[Theorem \ref{towerandcanonical}]\label{towerandcanonicalintro} Let $w $  be an element in $ W(\tilde A_{n-1} )$  and let $$
w= h_{n-1}(j_1, i_1) a_{n} h_{n-1}(j_2, i_2) a_{n} \dots  h_{n-1}(j_m, i_m)  a_{n} x, $$ 
 with $x \in W(A_{n-1} )
$,
be the canonical reduced form of   $w $. Then the canonical reduced expression of $R_n(w)$ is:    
\begin{equation}\label{imageAnminusoneintro}
R_n(w)= h_n(j_1, i_1) a_{n+1} h_n(j_2, i'_2) a_{n+1} \dots  h_n(j_m, i'_m)  a_{n+1}  \lfloor t, n  \rfloor x,  
\end{equation}
where, letting 
$   s=  \max \{k \   /  \    1 \le k \le m  \text{ and }  n-k  - i_k >0 \},$
we have:  
$$
i'_k=i_k  \text{ for } k \le s, \quad i'_k=i_k+1  \text{ for } k > s, \quad   t= n-s+1.  
$$

This implies $L(R_n(w))= L(w)$ and $  l(R_n(w))= l(w)+ 2 L(w),$
hence replacing $a_n$ by $ \sigma_{n} a_{n+1}\sigma_{n} $ in a  reduced expression for $w$ 
  produces a reduced expression for $R_n(w)$ if and only if the expression for $w$ is affine length reduced. 
\end{theoremnn}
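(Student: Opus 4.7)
The plan is to obtain $R_n(w)$ by the direct substitution $a_n \mapsto \sigma_n a_{n+1}\sigma_n$ in the canonical form of $w$, and then to rewrite the resulting word, using only the Coxeter relations of $W(\tilde A_n)$, into the expression displayed in (\ref{imageAnminusoneintro}). Once the pairwise inequalities of Theorem \ref{AA} are verified for the right-hand side, uniqueness of the canonical form will identify it as the canonical reduced expression of $R_n(w)$; the identity $L(R_n(w)) = m = L(w)$ is then read off from the number of $a_{n+1}$'s, and $l(R_n(w)) = l(w) + 2m$ follows from counting letters in the substituted expression (each $a_n$ becomes three letters) together with reducedness.

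The first rewriting step absorbs the trailing $\sigma_n$ of the first substitution into the first brick: $h_{n-1}(j_1,i_1)\,\sigma_n = h_n(j_1,i_1)$, since $\sigma_n$ commutes past $\sigma_{i_1}\cdots\sigma_1$ (all of index $\le n-2$). For each subsequent brick with index $k \ge 2$, the canonical inequalities of Theorem \ref{AA} combined with $i_k \le n-2$ rule out $j_k = n$: otherwise non-increase of $(j_k)$ would force $j_1 = \cdots = j_k = n$, and then $j_{k-1} = n > i_{k-1}+1$ would force $j_k < j_{k-1}$, a contradiction. Hence $j_k \le n-1$, and a direct braid/commutation calculation gives
\begin{equation*}
\sigma_n\,h_{n-1}(j_k,i_k)\,\sigma_n =
\begin{cases}
h_n(j_k, i_k)\,\sigma_{n-1} & \text{if } i_k \le n-3,\\
h_n(j_k, i_k+1) & \text{if } i_k = n-2.
\end{cases}
\end{equation*}
The $\sigma_{n-1}$ residue commutes with $a_{n+1}$ (as $n \ge 3$), so it propagates rightwards.

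I would then process the bricks inductively, maintaining the invariant that, after brick $k$, the accumulated residue is a descending word of the form $\sigma_{n-\ell_k+1}\cdots\sigma_{n-1}$ of length $\ell_k \ge 0$, which commutes with every subsequent $a_{n+1}$. A careful braid calculation shows that when such a residue meets $\sigma_n h_{n-1}(j_{k+1},i_{k+1})\sigma_n$ it either yields $h_n(j_{k+1},i_{k+1})$ followed by a residue of length $\ell_k + 1$, or yields $h_n(j_{k+1},i_{k+1}+1)$ followed by a residue of length $\ell_k$ --- the second case (``absorption'') being forced exactly when the quantity $n - k - i_k$ drops to $0$. Since $(n - k - i_k)_k$ is non-increasing in $k$ and first vanishes at $k = s+1$, one finds $i'_k = i_k$ for $k \le s$ and $i'_k = i_k + 1$ for $k > s$, matching (\ref{imageAnminusoneintro}). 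After the last brick, the accumulated residue together with the final $\sigma_n$ coming from $\sigma_n x$ combines into $\sigma_{n-s+1}\cdots\sigma_n = \lfloor t, n\rfloor$ with $t = n-s+1$, and commutes past the remaining $a_{n+1}$'s to land right before $x$.

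It remains to check that the sequence $(j_k, i'_k)$ together with the tail $\lfloor t, n\rfloor$ satisfies the pairwise inequalities of Theorem \ref{AA}: non-increase of $j_k$ is inherited from $w$; non-decrease of $i'_k$ follows from the uniform shift rule; the conditional inequalities survive because the $j_k$ are unchanged and the $i_k$ shift by $0$ or $1$ coherently; and $\lfloor t, n\rfloor x$ remains in canonical $W(A_n)$-form. Uniqueness of the canonical form then finishes the proof, and the identities $L(R_n(w)) = L(w)$ and $l(R_n(w)) = l(w) + 2 L(w)$ follow as explained in paragraph one. The main obstacle will be the inductive cascade: precisely controlling how the descending residue interacts with a brick requires a careful braid-word analysis and a clean statement of the invariant tying $\ell_k$ to $n - k - i_k$, with particular attention to the boundary cases $j_k = 1$ and $i_k = 0$ (the special form allowed for $k \ge 2$), where the brick structure degenerates and the residue interacts with the left end.
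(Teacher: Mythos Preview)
Your plan coincides with the paper's: substitute $a_n\mapsto\sigma_n a_{n+1}\sigma_n$, absorb the first trailing $\sigma_n$ into $h_{n-1}(j_1,i_1)$ to get $h_n(j_1,i_1)$, and then push an accumulating ascending segment through successive bricks. The paper organises this as induction on $m$ (applying the theorem itself to the block of affine length $m-1$, whose image ends in $\lfloor t_{m-1},n\rfloor$, and then treating the last brick) rather than as a running cascade, but the two bookkeepings are equivalent: your residue $\sigma_{n-\ell_k+1}\cdots\sigma_{n-1}$ is precisely $\lfloor t_k,n\rfloor$ with the rightmost $\sigma_n$ stripped off. (Minor slip: that word is \emph{ascending} in the indices, not descending.)

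The one concrete ingredient you have not isolated --- and which is the actual content of what you flag as ``the main obstacle'' --- is the inequality $t_{m-1}>j_m$: the left end of the residue (after appending the incoming $\sigma_n$) must sit strictly above $j_{k+1}$. This is what makes the rule $\lfloor t,n\rfloor\lfloor j,n\rfloor=\lfloor j,n\rfloor\lfloor t-1,n-1\rfloor$ applicable, so that the residue can slide past $\lfloor j_{k+1},n\rfloor$ and reach $\lceil i_{k+1},1\rceil$; without it your cascade simply does not go through. The paper proves it from the pairwise inequalities: one has $t_{m-1}=n-s+1>i_s+1$, and then either $j_s\le i_s+1$ already, or the $j$-sequence is strictly decreasing up to index $s+1$, forcing $j_{s+1}\le n-s<t_{m-1}$. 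With this in hand, your two outcomes are exactly the paper's two cases $t_{m-1}-1>i_m+1$ (residue lengthens by one) and $t_{m-1}-1\le i_m+1$ (absorption, $i'_m=i_m+1$, residue length unchanged), and the claim that the residue commutes with $a_{n+1}$ follows because $t_{m-1}\ge 2$, itself a consequence of $s\le n-1$.
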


The latter theorem gives a necessary and sufficient condition for an element in $ W(\tilde A_{n} )$ to belong to the  image of $ W(\tilde A_{n-1} )$, that is Corollary \ref{con}. \\ 

A worthwhile consequence is that the corresponding Hecke algebras embed one in the other regardless of the ground ring, that is Corollary \ref{HeckeA}. In other words the morphism of Hecke algebras $$  HR_n: H\tilde{A}_{n-1} (q)   \longrightarrow   H\tilde{A}_{n} (q)$$ associated to $R_n$ in \eqref{defRn}   is {\bf injective}. 
Important in itself, this injectivity 
has a beautiful direct effect of topological nature. Indeed, as we will explain shortly below, the  canonical form will allow us to classify Markov traces over the tower of affine Hecke algebras \eqref{defRn} -- such a trace contains the Markov-Jones trace in   \cite{Jones-annals}. And since we use to 
call  ``Markov trace'' any trace that defines an invariant of links, the   Markov traces considered here are those  that   define  an invariant of  ``oriented affine links'' as defined in \cite{Sadek_2015_2}:  this is a class of links that is contained in the class of links in a torus and contains the class of usual links in $S_3$. Now  the injectivity guarantees   a better invariant! In other words an invariant that distinguishes  more links than it would if the tower was not faithful, and this is to be explained topologically when the time of traces comes.

\subsection{ } The last two paragraphs are devoted to type $\tilde B$ and type $\tilde D$ respectively. We provide canonical forms (Theorem \ref{AB} and Theorem
\ref{AD} respectively) and describe the effect of left multiplication in type $\tilde B$, 
eventually noticing that we do not have an analogue of Theorem \ref{towerandcanonicalintro} for type $\tilde D$.  \\

\subsection{ } We mention briefly farther goals in what follows. \\

  In general the canonical form gives us precious data on the space of traces, in particular the embedding of the canonical forms would help a great deal in classifying traces of type Jones on the tower of affine Hecke algebras. Indeed the canonical form given here is easily seen to  coincide (up to a notation), on fully commutative elements, with the normal form (actually, a canonical form) established in \cite{al2016tower}, which is a crucial ingredient in classifying Markov traces on the tower of affine Temperley-Lieb algebras of type $\tilde A$ in \cite{al2015markov}. {The author in a forthcoming work shows how this canonical form would force all Markov traces on the (fortunately injective) tower of affine Hecke algebras \ref{defRn} to be determined by a trace on the smallest algebra amongst them: $H\tilde{A}_{2} (q) $, which leads to a classification of all Markov traces on this tower!} This work uses the fact that the canonical form determines elegantly  a full set of minimal representatives of  $W(\tilde A_{n-1})\backslash W(\tilde A_{n}) $ in the sense of Dyer (see \cite{Dyer_1991}).\\

 {Moreover}, the rigidity of the blocks is a natural field for "cancelling", otherwise called "applying the star operation", to comment this point we need a more advanced calculus, to be done in a forthcoming work centering around the famous Kazhdan-Lusztig cells, and around $W(A_n)$-double cosets since some additional work on the material obtained above (having very strong relations with the second direct consequence) leads to a complete (long) list of canonical reduced expressions of representatives of $W(A_n)$-double classes.

In yet another direction, namely an algorithmic way to go towards and come back from the Bernstein presentation, the canonical form indeed gives  long ones easily, definitely the third consequence is a tricky way to shorten the two algorithms. It gives as well a way to enumerate elements by affine length for example.  

Experts of the theory of light leaves (born in \cite{Lib08}) would be interested in such a canonical form, since their computation starts usually with a reduced expression, thus it is even better to have it canonical. For instance, in an ongoing work starting from the canonical form, David Plaza and the author are providing an explicit and simple way to produce  "canonical"  light  leaves bases for the group $W(\tilde A_{n} )$, where usually the construction depends on many non-canonical choices. It is worth to mention that the algorithm to arrive to our canonical form can start from any reduced expression and not only from affine length reduced ones.  \\

The work is self contained and accessible for any who is familiar with Coxeter systems or otherwise want-to-be, we count only on the simplicity of the canonical form, which shows that $ W(\tilde A_{n} )$ is way more "tamed" than Coxeter theory amateurs tend to think, or at least than the author used to think.

\section{Normal form in Coxeter groups}\label{basics}

\subsection{Parabolic subgroups of Coxeter groups} 

Let $(W(\Gamma),S)$ be a Coxeter system with associated Coxeter graph $\Gamma$. Let $w\in W(\Gamma)$ or simply $W$. We denote by $l(w)$ the  length of   $w$ (with respect to $S$).   We define $\mathscr{L} (w) $ to be the set of $s\in S$ such that $l(sw)<l(w)$, in other terms  $s$ appears at the left edge of some reduced expression of $w$.  We define $\mathscr{R}(w)$ similarly, on the right.    The following basic result is to be frequented in this work, as it should (see for details \cite[Lemma 9.7]{Lusztig}):
\begin{theorem}\label{para}

  { Suppose $I$ is a subset of $S$ and $W_I$ is the subgroup of $ W$ generated by $I$  (to be called parabolic). Then $(W_I, I)$ is a Coxeter system,  and each right coset $w W_I$ has a unique element of minimal length, say $a$,  characterized by the condition: For any $x \in W_I$ we have $l(ax)=l(a)+l(x)$. We call $a$ the  {\em distinguished representative} of its coset $a W_I$. We denote by $W ^I $ the set of all distinguished representatives of $ W / W_I$.}  
 
 \end{theorem}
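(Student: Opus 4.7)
The plan is to split the statement into two parts: first the subsystem assertion, then the coset representatives. For the subsystem, I would rely on the strong exchange condition for $(W,S)$. Given $w \in W_I$, take a reduced expression $w = s_1\cdots s_r$ in $(W,S)$; I would show by induction on $r$ that every $s_i$ lies in $I$. The inductive step uses that if $w$ is expressible as a product of elements of $I$ of total length $\le r$, then either one of the factors admits a cancellation by exchange, contradicting reducedness of $w$, or a letter outside $I$ is forced to appear in a non-reducible position. Once reduced expressions for elements of $W_I$ are confined to $I$, Matsumoto's theorem (any two reduced expressions are connected by braid moves) is inherited inside $W_I$, and $(W_I, I)$ satisfies the exchange condition, hence is a Coxeter system.

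The backbone of the second part is the well-known consequence of the strong exchange condition: if $l(uv) < l(u) + l(v)$ for any $u,v \in W$, then there exists $s \in S$ with $s \in \mathscr{R}(u) \cap \mathscr{L}(v)$. Let $a$ be of minimal length in $wW_I$. For any $s \in I$, $as \in aW_I$, so $l(as) \ge l(a)$, forcing $l(as)=l(a)+1$; hence $\mathscr{R}(a)\cap I = \emptyset$. Conversely, whenever $\mathscr{R}(a)\cap I = \emptyset$, I claim $l(ax)=l(a)+l(x)$ for every $x\in W_I$: indeed otherwise the exchange consequence produces $s\in \mathscr{R}(a)\cap \mathscr{L}(x)$, and $\mathscr{L}(x)\subseteq I$ by Step~1, contradicting $\mathscr{R}(a)\cap I=\emptyset$. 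This gives the characterizing property.

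For uniqueness, if $a,a'\in wW_I$ both satisfy the additive property, write $a'=ax$ with $x\in W_I$; then $l(a')=l(a)+l(x)$ using the property for $a$, while $a=a'x^{-1}$ gives $l(a)=l(a')+l(x^{-1})=l(a')+l(x)$ using the property for $a'$. Summing these forces $l(x)=0$, so $x=1$ and $a=a'$. Existence of a minimum is automatic since lengths are non-negative integers, and the minimal-length element is forced to have the additive property by the argument above.

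The main obstacle is Step~1 (the subsystem assertion): it is a genuine theorem, classically attributed to Tits, and depends essentially on the strong exchange/deletion condition inside $(W,S)$. Once this length-preserving embedding is secured, Steps~2 and 3 are purely length-theoretic and reduce to repeated applications of the exchange consequence $\mathscr{R}(u)\cap \mathscr{L}(v)\neq \emptyset$ whenever $l(uv)<l(u)+l(v)$. I would therefore refer the reader to \cite{Lusztig} for Step~1 and give the short length-computation proofs of Steps~2 and 3 in detail.
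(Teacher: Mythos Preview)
The paper does not give a proof of this statement; it cites \cite[Lemma 9.7]{Lusztig} and moves on. So there is nothing to compare your argument against in the paper itself.

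That said, your Step~2 rests on a false lemma. The ``exchange consequence'' you invoke --- that $l(uv) < l(u)+l(v)$ forces $\mathscr{R}(u)\cap\mathscr{L}(v)\neq\emptyset$ --- is not true. In $W(A_2)=\langle s_1,s_2\rangle$ take $u=v=s_1s_2$: then $uv=s_1s_2s_1s_2=s_2s_1$ has length $2<4$, yet $\mathscr{R}(u)=\{s_2\}$ and $\mathscr{L}(v)=\{s_1\}$ are disjoint. What the exchange condition actually gives you is a \emph{reflection} $t$ (not necessarily simple) with $l(ut)<l(u)$ and $l(tv)<l(v)$; that is not enough to conclude $t\in I$.

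The standard fix is a direct induction on $l(x)$. Write $x=x's$ with $s\in I$ and $l(x')=l(x)-1$; by induction $l(ax')=l(a)+l(x')$, and the concatenation of reduced words for $a$ and $x'$ is reduced. If $l(ax's)=l(ax')-1$, the exchange condition deletes a single letter from this concatenated word. If the deleted letter lies in the $x'$ part, you get an expression of $x=x's$ of length $l(x)-2$, impossible. If it lies in the $a$ part, you produce $a'\in aW_I$ with $l(a')<l(a)$, contradicting minimality. This is the argument in Lusztig's book that you already cite for Step~1. Your Steps~1 and~3 are fine as written.
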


 The assertion has an obvious left version.

\subsection{Fokko du Cloux's normal form}\label{FdC} 
We record here the main idea and results in \cite{duCloux_X}, changing the lexicographic order from left (i.e. left-to-right) to right (i.e. right-to-left or starting on the right, for instance $(1,2,3)> (3,2,1)$). 
Some phrasings   come from    \cite{duCloux_transducer}
and \cite[3.4]{BB}. We will mostly use them later on, for types $\tilde B $ and $\tilde D$. 

To begin with, let 
$(W,S)$ be a Coxeter system with $S$ finite.  
We write a descending chain of subsets $S_k$ of $S$ by removing one generator at a time
 (if $n=\Card S$, we have $S=S_n$ and $S_0=\emptyset$) and get a descending chain of Coxeter subgroups $(W_k, S_k)$. Let $W^k$ be the set  of distinguished  representatives of $W_k / W_{k-1}$. One gets what Stembridge calls, in 1997,  a {\em canonical factorization} of any $w$ \cite[1.3]{St} as 
\begin{equation}\label{StemFactorization}
w = w_n w_{n-1} \cdots w_1,  \quad w_i\in W^i, \quad l(w)=l(w_n) + \cdots +l(w_1).
\end{equation} 
 Stembridge adds that in types $A_n$, $B_n$ and $D_n$ (with a simple convention),  one can arrange the chain $S_k$ so that each distinguished representative has a unique reduced expression, thus he gets a {\em canonical reduced word},  which we used largely   in  our previous works. He also mentions that his canonical reduced word for type  $A_n$ is the right lex-min word described by Edelman in  1995 \cite{Edelman}.  

\smallskip 

Now  in 1990, in a manuscript at Ecole Polytechnique, Fokko du Cloux   describes    what he calls the {\it normal form} of an element in a Coxeter group. 
He starts with fixing an order on $S$: $S=\{s_1, s_{2}, \cdots, s_n\}$ (increasing),
and defines:  
\begin{definition}
The  {\em normal form} of an element $w \in W$ is 
the {\em unique} reduced expression of $w$ that is {\em minimal with respect to the lexicographic order from right to left}.  
 This normal form is what we call the {\em right lex-min form} in what follows.  
\end{definition}

\medskip 

With this order on $S$ we get a chain $(W_k, S_k)$ as above, with    
$S_k=\{s_1, \cdots, s_k\}$, and the canonical factorization  
\eqref{StemFactorization}    above  actually expresses that the normal form of $w$ is obtained by appending the normal forms of the $w_i$. This relies on  an   
  observation that  has to be kept constantly in mind, however simple: 
\begin{lemma}\label{minlength}
 Let $W^n$ be the set of distinguished representatives of 
$W/W_{n-1}$. 
An element $x$ of $W$ belongs to $W^n$ if and only if $x=1$ or the right lex-min   form of $x$ ends with $s_n$ on the right. 
\end{lemma}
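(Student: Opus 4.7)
The plan is to translate both sides of the equivalence into conditions on the right descent set $\mathscr{R}(x)$, and then match them up.

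First, I would apply Theorem~\ref{para} with $I=S_{n-1}$. The theorem characterizes the distinguished representative $a$ of its right coset $aW_{n-1}$ by the property that $l(ay)=l(a)+l(y)$ for every $y\in W_{n-1}$. Specializing to $y=s_i$ with $1\le i\le n-1$, this is equivalent to $l(as_i)>l(a)$ for all such $i$, i.e.\ to $\mathscr{R}(a)\cap S_{n-1}=\emptyset$. Since $\mathscr{R}(a)\subseteq S$ and $\mathscr{R}(a)=\emptyset$ exactly when $a=1$, this yields the equivalent formulation: $a\in W^n$ iff $a=1$ or $\mathscr{R}(a)\subseteq\{s_n\}$.

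Second, I would identify the rightmost letter of the right lex-min form of $x$ as the smallest-index element of $\mathscr{R}(x)$. The ingredient is the standard consequence of the Exchange Condition that for every $s\in\mathscr{R}(x)$ the element $x$ admits a reduced expression ending in $s$, and conversely the last letter of any reduced expression of $x$ lies in $\mathscr{R}(x)$. Because the right-to-left lexicographic order puts maximum weight on the rightmost letter, minimization forces that letter to be $s_k$ where $k=\min\{j:s_j\in\mathscr{R}(x)\}$.

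Combining the two observations finishes the argument: for $x\ne 1$, membership in $W^n$ reads $\mathscr{R}(x)\subseteq\{s_n\}$, which (since $\mathscr{R}(x)$ is then nonempty) is equivalent to the minimum element of $\mathscr{R}(x)$ being $s_n$, hence by the previous paragraph to the right lex-min form of $x$ ending in $s_n$; the case $x=1$ is trivial. The only mildly delicate point is the identification of the rightmost letter as the smallest-index descent, but this is a classical consequence of the Exchange Condition and poses no real obstacle.
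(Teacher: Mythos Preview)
Your proof is correct and follows essentially the same approach as the paper's. Both arguments hinge on the observation that the rightmost letter of the right lex-min form is the least element of $\mathscr{R}(x)$; the paper phrases this as a contrapositive (any reduced expression ending in a smaller generator would beat the lex-min form), while you state it directly, but the content is the same.
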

Indeed if $x\ne 1$ belongs to $W^n$, all  reduced expressions of $x$ end with $s_n$ on the right. And if the right lex-min  form of $x$ ends with $s_n$ on the right, then so does any other reduced expression, otherwise it would be smaller in lexicographic order. 

\smallskip  

Then  Fokko du Cloux goes on with an important Lemma leading up to a strong Theorem. 

\begin{lemma}\cite{Deodhar}\label{Soergel}
 Let $(W,S)$ be a Coxeter group and let $I$ be a  subset of $S$, let  $W_I$ be the subgroup generated by $I$  and  $W^I$ be the set of distinguished representatives of $W/W_I$. Then for $s \in S$ and 
$w \in  W^I$: 
\begin{itemize}
\item if $\ell(sw)<\ell(w)$, then $sw \in W^I$; 
\item if  $\ell(sw)>\ell(w)$ and  $sw \notin W^I$,  there is $r \in I$  such that $sw=wr$.  
\end{itemize}
\end{lemma}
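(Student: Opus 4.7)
The plan is to use the characterization of $W^I$ as the set of $w\in W$ such that $\ell(wr)>\ell(w)$ for every $r\in I$, together with the Exchange Condition applied on the right. I would separate the two assertions and treat them independently; both are short length-function arguments, the first purely by the triangle inequality $|\ell(x)-\ell(y)|\le \ell(xy^{-1})$, the second by comparing what the Exchange Condition delivers against the minimality hypothesis on $w$.

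For the first bullet, I assume $w\in W^I$ and $\ell(sw)<\ell(w)$, and I want to verify that $\ell((sw)r)>\ell(sw)$ for every $r\in I$. Fix such an $r$. Since $w\in W^I$ we have $\ell(wr)=\ell(w)+1$. Now $swr$ differs from $wr$ by left multiplication by the simple reflection $s$, so $\ell(swr)\ge \ell(wr)-1=\ell(w)$. On the other hand $\ell(sw)=\ell(w)-1$, hence $\ell(swr)\ge \ell(w)=\ell(sw)+1>\ell(sw)$, as required. This step is essentially bookkeeping on the length function, with no obstacle.

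For the second bullet, suppose $\ell(sw)>\ell(w)$ and $sw\notin W^I$. By the characterization of $W^I$, there exists $r\in I$ with $\ell(swr)<\ell(sw)$. Fix a reduced expression $w=s_{i_1}\cdots s_{i_k}$; then $sw=s\,s_{i_1}\cdots s_{i_k}$ is a reduced expression of length $k+1$. Apply the Exchange Condition to this reduced expression of $sw$ and to the simple reflection $r$ in its right descent set: $swr$ is obtained from $s\,s_{i_1}\cdots s_{i_k}$ by deleting exactly one letter. Either the deleted letter is the leftmost $s$, in which case $swr=s_{i_1}\cdots s_{i_k}=w$, i.e.\ $sw=wr$ with $r\in I$, which is exactly the conclusion; or the deleted letter is some $s_{i_j}$, in which case multiplying by $s$ on the left gives $wr=s_{i_1}\cdots\widehat{s_{i_j}}\cdots s_{i_k}$, an expression of length $k-1$, so $\ell(wr)\le \ell(w)-1<\ell(w)$, contradicting $w\in W^I$.

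The only slightly subtle point---which I would flag as the main thing to be careful about---is the correct invocation of the Exchange Condition on the right (applied to the reduced word for $sw$, not for $w$), and the fact that the two possible deletions translate into the two alternatives in the proof, one of which is ruled out precisely by the defining minimality property of elements of $W^I$. Everything else is routine, so the whole proof fits comfortably in a few lines.
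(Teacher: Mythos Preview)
Your proof is correct. Both bullets are handled cleanly: the first by the one-step length inequality $|\ell(sx)-\ell(x)|\le 1$, and the second by applying the Exchange Condition on the right to the reduced word $s\,s_{i_1}\cdots s_{i_k}$ for $sw$, then ruling out deletion of an interior letter via the minimality of $w$ in its coset.

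Note that the paper does not actually supply a proof of this lemma; it is quoted from the literature (attributed to Deodhar) and used as a black box. Your argument is the standard one and would be entirely appropriate here.
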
 

\begin{theorem}\label{lefts}\cite[Theorem 2.6]{duCloux_transducer}
Let $w \in W$ with right lex-min  form $w=s_{i_1} \cdots s_{i_k}$  and let $s $ in $S$. 
 \begin{enumerate}
\item 
 If $\ell(sw)<\ell(w)$, there exists a unique $j$, $1 \le j \le k$, such that 
the right lex-min  form of $sw$ is
$ \   s_{i_1} \cdots \hat s_{i_j}  \cdots  s_{i_k}$.
\item  If $\ell(sw)>\ell(w)$,   there exists a unique $j$, $0 \le j \le k$, and a unique 
$t \in S$   such that the right lex-min  form of $sw$ is 
$\ s_{i_1} \cdots   s_{i_j} t s_{i_{j+1}}   \cdots  s_{i_k}$    
(in particular we have $t <  s_{i_j}$).
\end{enumerate} 
In other words, on left multiplication by a generator, the right lex-min form 
is modified by
either erasing or inserting a single term.   
\end{theorem}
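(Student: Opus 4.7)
The plan is to prove the theorem by a double induction: an outer induction on the rank $n = |S|$ and an inner induction on $\ell(w)$ within each rank. Uniqueness is immediate throughout: in part (1) the strong exchange condition applied to the reduced word $s_{i_1}\cdots s_{i_k}$ and the reflection $s$ produces a unique $j$ with $sw=s_{i_1}\cdots\widehat{s}_{i_j}\cdots s_{i_k}$; in part (2) uniqueness of $(j,t)$ follows from uniqueness of the right lex-min form of $sw$. The real content is the existence of the displayed single-letter change together with the assertion that the resulting word is again lex-minimal.

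For the structural step I would decompose $w$ via the canonical factorization \eqref{StemFactorization} as $w=w_n u$ with $w_n\in W^n$ and $u\in W_{n-1}$, so that, by Lemma \ref{minlength} together with length-additivity, the right lex-min form of $w$ is the concatenation of those of $w_n$ and $u$. I would then apply Lemma \ref{Soergel} to $(s,w_n)$. If $sw_n\notin W^n$, the lemma forces $sw_n=w_n r$ for some $r\in S_{n-1}$, so $sw=w_n(ru)$ is a canonical factorization of $sw$ whose top factor is unchanged and whose bottom factor is obtained from $u\in W_{n-1}$ by left-multiplication by $r\in S_{n-1}$ with length rising by one. The outer (rank) inductive hypothesis applied to $(r,u)$ in the rank $(n-1)$ group $W_{n-1}$ supplies the required single-letter insertion in the $u$-part of the right lex-min form, giving part (2). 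The case $w_n=1$ splits similarly: $s\in S_{n-1}$ is handled by the outer rank induction, and $s=s_n$ yields the trivial insertion of $s_n$ at position~$0$.

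In the complementary case $sw_n\in W^n$ with $w_n\neq 1$, the change is confined to the top factor. Here I would write the right lex-min form of $w_n$ as $\sigma s_n$ (legitimate because every reduced expression of a nontrivial element of $W^n$ ends in $s_n$; a direct lex-comparison then shows that $\sigma$ is the right lex-min form of $w_n s_n$), and similarly $\sigma' s_n$ for $sw_n$. Since $\ell(\sigma)<\ell(w)$, the inner (length) inductive hypothesis applied to $(s,\sigma)$ produces a single-letter change between the right lex-min forms of $\sigma$ and $\sigma'$, and reappending $s_n$ transports this change to the right lex-min form of $w$. The side condition $t<s_{i_j}$ in part~(2) then follows from lex-minimality: were $t\geq s_{i_j}$, a braid or commutation move at positions $j,j+1$ would produce a strictly lex-smaller reduced expression for $sw$, contradicting the minimality built into the construction.

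The main obstacle, beyond the notational overhead of running two interlocked inductions, is verifying that the reduced expressions produced by the inductive arguments are genuinely the right lex-min forms of $sw$, and not merely reduced expressions that happen to differ from the right lex-min form of $w$ by a single letter. This reduces to ruling out alternative lex-smaller candidates, and uses both uniqueness in Lemma \ref{Soergel} (to pin down $r$ when $sw_n=w_n r$) and the uniqueness provided by the strong exchange condition (to pin down $j$ in the erasure case). With these uniqueness assertions in hand, the remainder is careful bookkeeping around the canonical factorization.
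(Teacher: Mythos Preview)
The paper does not prove this theorem; it is quoted from du Cloux \cite{duCloux_transducer}, with Lemma~\ref{Soergel} recorded immediately before as the key input. Your approach---outer induction on $|S|$ via the canonical factorization $w=w_n u$ and Deodhar's Lemma~\ref{Soergel}, inner induction on $\ell(w)$ by stripping the rightmost $s_n$ from $w_n$---is exactly the architecture du Cloux uses, so there is no real divergence to discuss.

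Your outline is sound, and the two verifications you flag as the ``main obstacle'' do go through: the passage from the right lex-min form of $w_n$ to that of $w_n s_n$ by deleting the trailing $s_n$ (and conversely) is justified because every reduced expression of a nontrivial element of $W^n$ ends in $s_n$, so appending $s_n$ is an order-preserving bijection between the reduced expressions of $w_n s_n$ and those of $w_n$.

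The one place where your sketch is genuinely incomplete is the side condition $t<s_{i_j}$. Your proposed argument (``a braid or commutation move at positions $j,j{+}1$'') only works when $t$ and $s_{i_j}$ commute; if they do not, there is no local move available at those two positions. The correct way to get the inequality is to carry it through the induction: in Case~1 the insertion lands either at the seam (where $s_{i_j}=s_n$ and $t\in S_{n-1}$, so $t<s_n$) or strictly inside $u$ (where the rank-$(n{-}1)$ hypothesis gives $t<s_{i_j}$); in Case~2 the inner inductive hypothesis on $\sigma$ gives the inequality directly, and appending $s_n$ and then $u$ does not touch position~$j$. With that fix, your proof is complete.
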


\section{Canonical form in 	$W(\tilde A_{n})$}	

\subsection{Canonical form in 	$W(A_{n})$}\label{affA}

	Let $n\ge 2$. 		Consider the $A$-type Coxeter group 

\pagebreak
\noindent 
with $n$ generators $W(A_{n})$, with the following Coxeter diagram:
			
			\begin{figure}[ht]
				\centering
				\begin{tikzpicture}[scale=0.6]

  \filldraw (0,0) circle (2pt);
  \node at (0,-0.5) {$\sigma_{1}$}; 
   
  \draw (0,0) -- (1.5, 0);

  \filldraw (1.5,0) circle (2pt);
  \node at (1.5,-0.5) {$\sigma_{2}$};

  \draw (1.5,0) -- (3, 0);

  \node at (3.5,0) {$\dots$};

  \draw (4,0) -- (5.5, 0);
  
  \filldraw (5.5,0) circle (2pt);
  \node at (5.5,-0.5) {$\sigma_{n-1}$};
 
  \draw (5.5,0) -- (7, 0);
  
  \filldraw (7,0) circle (2pt);
  \node at (7,-0.5) {$\sigma_{n}$};

               \end{tikzpicture}
			\end{figure}

			Now let $W(\tilde{A_{n}}) $ be the affine Coxeter group of $\tilde{A}$-type with  set of  $n+1$ generators $ S_n= \left\{ \sigma_{1}, \sigma_{2}, \dots,   \sigma_{n}, a_{n+1}   \right\}$,  perfectly determined   by the  following Coxeter graph: 
			\begin{figure}[h!]
				\centering
				\begin{tikzpicture}[scale=0.6]

 \node at (0,0.5) {$\sigma_{1}$}; 
  \filldraw (0,0) circle (2pt);
   
  \draw (0,0) -- (1.5, 0);
  
  \node at (1.5,0.5) {$\sigma_{2}$};
  \filldraw (1.5,0) circle (2pt);

  \draw (1.5,0) -- (5.5, 0);

  \node at (5.5,0.5) {$\sigma_{n-1}$};
  \filldraw (5.5,0) circle (2pt);
 
  \draw (5.5,0) -- (7, 0);
  
  \node at (7,0.5) {$\sigma_{n}$};
  \filldraw (7,0) circle (2pt);

  \draw (7,0) -- (3, -3);
  
  \filldraw (3, -3) circle (2pt);
  \node at (3, -3.5) {$a_{n+1}$};

  \draw (3, -3) -- (0, 0);
               \end{tikzpicture}
			\end{figure}

  Since $W(A_n)$ is a parabolic subgroup of $W(\tilde A_n)$, we have  for any $v \in W(\tilde A_n)$, $v\ne 1$: 
\begin{equation}\label{parabolic}
\mathscr{R} (v) =  \{ a_{n+1}  \}  \iff  \forall x \in W(A_n) \quad  l(vx)= l(v)+l(x). 
\end{equation}  
			
In the group $W(A_{n})$  we let:  
$$
\begin{aligned}
\lfloor i,j \rfloor &= \sigma_i \sigma_{i+1} \dots \sigma_j   \   \text{ for } n\ge j\ge i \ge 1    \  \text{ and } \    \lfloor n+1,n \rfloor = 1, 
\\
\lceil  i,j \rceil  &= \sigma_i \sigma_{i-1} \dots \sigma_j   \    \text{ for } 1\le j\le i \le n \    \text{ and }  \   \lceil  0,1 \rceil  = 1, 
\\
\qquad  \quad    h(r,i) & =   \lfloor r,n \rfloor \lceil  i,1 \rceil 
  \quad   \text{ for }  0\le i \le n-1, 1\le r \le n+1.  
  \end{aligned}
$$

 \medskip
It is well-known that  the set of distinguished representatives of  $W(A_n)/W(A_{n-1})$
is   $ \{ \lfloor r, n \rfloor ; 1\le r \le n+1   \} $, which leads with 
\eqref{StemFactorization} to the following well-known theorem.

\begin{theorem}\label{1_2}
    $W(A_n)$ is the set of elements of the following canonical reduced form: 
 \begin{equation}\label{Stembridge}
 \lfloor i_1, j_1 \rfloor \lfloor i_2, j_2 \rfloor \dots  \lfloor i_s, j_s \rfloor  
 \end{equation} 
 with $n \ge j_1 > \dots > j_s  \ge 1$ and 
$ j_t \ge i_t \ge 1$ for $s \ge t \ge 1$. Identity is to be considered the case where $s=0$.
\end{theorem}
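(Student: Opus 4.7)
The plan is to prove Theorem \ref{1_2} by induction on $n$, structured around the coset decomposition $W(A_n)/W(A_{n-1})$ that is the basic building block of the Stembridge--du Cloux canonical factorization \eqref{StemFactorization}.

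First I would pin down the distinguished representatives of $W(A_n)/W(A_{n-1})$. Since $[W(A_n):W(A_{n-1})]=n+1$, it suffices to exhibit $n+1$ distinct elements of minimal length in their cosets. The identity handles one coset, and I claim the elements $\lfloor r,n\rfloor=\sigma_r\sigma_{r+1}\cdots\sigma_n$ for $1\le r\le n$ handle the rest: the word is manifestly reduced of length $n-r+1$, and a short check using only braid and commutation relations among the $\sigma_i$ shows that $\lfloor r,n\rfloor\sigma_i$ has length $n-r+2$ for every $i<n$ (equivalently, $\sigma_n$ is the unique right descent), so by Theorem~\ref{para} each $\lfloor r,n\rfloor$ is a distinguished representative. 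These $n$ elements are pairwise distinct (since $l(\lfloor r,n\rfloor)=n-r+1$) and all nontrivial modulo $W(A_{n-1})$, so together with $1$ they exhaust the cosets.

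Next I would set up the induction on $n$. The base case $n=1$ gives $W(A_1)=\{1,\sigma_1\}$, matching the statement with $s=0$ and $s=1,(i_1,j_1)=(1,1)$ respectively. For the inductive step, Theorem~\ref{para} factors any $w\in W(A_n)$ uniquely as $w=w'x$ with $w'\in\{1\}\cup\{\lfloor r,n\rfloor:1\le r\le n\}$, $x\in W(A_{n-1})$ and $l(w)=l(w')+l(x)$. The induction hypothesis, applied to $x$, produces a canonical expression $\lfloor i_2,j_2\rfloor\cdots\lfloor i_s,j_s\rfloor$ with $n-1\ge j_2>\cdots>j_s\ge 1$ and $j_t\ge i_t\ge 1$. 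If $w'=1$ we are done; otherwise we prepend $\lfloor i_1,j_1\rfloor:=\lfloor r,n\rfloor$, which satisfies $j_1=n>n-1\ge j_2$ and $j_1\ge i_1=r\ge 1$, so the strict-decrease and bounds conditions are preserved. Reducedness is immediate: the word length equals $(n-r+1)+l(x)=l(w')+l(x)=l(w)$.

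For uniqueness, I would exploit that the condition $j_1=n$ exactly detects whether $w\notin W(A_{n-1})$: if $j_1<n$, every factor lies in $W(A_{n-1})$, hence so does $w$; conversely if $w\in W(A_{n-1})$ then two canonical forms with $j_1=n$ would contradict the length equality $l(w)=(n-i_1+1)+l(x)$. When $j_1=n$, the pair $(i_1,j_1)=(r,n)$ is forced as the unique distinguished representative of $wW(A_{n-1})$, and the remaining factors are the unique canonical form of $x$ by induction. The main (and really only) subtle point is the verification in the first paragraph that each $\lfloor r,n\rfloor$ is a distinguished representative; everything after that is a clean induction riding on Theorem~\ref{para}.
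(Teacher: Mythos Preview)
Your proposal is correct and follows essentially the same approach as the paper: the paper simply states that the set of distinguished representatives of $W(A_n)/W(A_{n-1})$ is $\{\lfloor r,n\rfloor : 1\le r\le n+1\}$ as well-known, and then invokes the canonical factorization \eqref{StemFactorization} to conclude. You have filled in the details of both steps (verifying the distinguished representatives and running the induction explicitly, including uniqueness), which the paper omits as standard.
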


 Notice that if $ \sigma_{n} $ appears in form 
(\ref{Stembridge}), then  $ \sigma_{n} $  will certainly appear only once, and it is to be equal to $\sigma_{j_{1}}$.

		\begin{definition}

	An element $u$ in $W(A_{n})$ is called {\rm extremal}  if  both $ \sigma_{n} $ and $ \sigma_{1} $ appear in a (any) reduced expression of $u$. 
     \end{definition}

\begin{lemma}\label{extremal1}   
Let   $P$ be the parabolic subgroup  of $W(A_{n})$  
generated by   $
 \sigma_2, \dots , \sigma_{n-1} .$  
 An   element in $W(A_{n})$   can uniquely be written in the following reduced form:
$$
 h(r,i) \; x,  \quad 0 \le i \le n-1,  \   1 \le r \le n+1,   \   x \in P. 
$$
The element is extremal if and only if either $r=1$ and $i=0$, or   $i \ge 1$ and $   r \le n  $.
\end{lemma}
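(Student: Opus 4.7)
The plan is to prove both assertions by working with the tower of parabolic subgroups $P\subset W(A_{n-1})\subset W(A_n)$, where $W(A_{n-1})$ denotes the subgroup generated by $\sigma_1,\dots,\sigma_{n-1}$. Recall from the discussion preceding Theorem~\ref{1_2} that the distinguished representatives of $W(A_n)/W(A_{n-1})$ are precisely the $\lfloor r,n\rfloor$ for $1\le r\le n+1$, each in reduced form. The key intermediate step is to identify the distinguished representatives of $W(A_{n-1})/P$: I claim $W(A_{n-1})^P=\{\lceil i,1\rceil:0\le i\le n-1\}$. A counting check gives $|W(A_{n-1})/P|=n!/(n-1)!=n$, which matches the number of candidates, and the $\lceil i,1\rceil$ are manifestly distinct (they have distinct lengths). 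That each $\lceil i,1\rceil$ lies in $W(A_{n-1})^P$ reduces to verifying $\mathscr{R}(\lceil i,1\rceil)\subseteq\{\sigma_1\}$: either $i=0$ and the element is trivial, or $i\ge 1$ and a direct descent computation on the underlying cycle $(1,i+1,i,\dots,2)$ shows that $\sigma_1$ is the only right descent.

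Next I invoke the standard tower property of distinguished representatives: for parabolic subgroups $H\subset K\subset W$, one has $W^H=W^K\cdot K^H$, the products being length-additive and the factorisation unique. Applied to $P\subset W(A_{n-1})\subset W(A_n)$, this identifies $W(A_n)^P$ as the set of products $\lfloor r,n\rfloor\lceil i,1\rceil=h(r,i)$, each in reduced form, with the stated ranges $1\le r\le n+1$ and $0\le i\le n-1$. Combined with Theorem~\ref{para}, every $w\in W(A_n)$ then has a unique reduced factorisation $h(r,i)\,x$ with $x\in P$, which settles the first assertion.

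For extremality, I use the standard consequence of Matsumoto's theorem that the support of an element---the set of generators appearing in a reduced expression---is independent of the chosen reduced expression, since any two reduced expressions are connected by braid relations. Since $x\in P$ contributes only generators in $\{\sigma_2,\dots,\sigma_{n-1}\}$, the generator $\sigma_1$ (resp.\ $\sigma_n$) belongs to the support of $w=h(r,i)x$ if and only if it appears in the given reduced expression $\sigma_r\cdots\sigma_n\sigma_i\cdots\sigma_1$ of $h(r,i)$. Inspection shows that $\sigma_n$ appears precisely when $r\le n$, whereas $\sigma_1$ appears precisely when $r=1$ (contributed by $\lfloor 1,n\rfloor$) or $i\ge 1$ (contributed by $\lceil i,1\rceil$). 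Consequently $w$ is extremal exactly when $r\le n$ and ($r=1$ or $i\ge 1$), which sorts cleanly into the two announced cases $(r=1,i=0)$ and $(i\ge 1,r\le n)$. The only slightly delicate step in the whole proof is the descent computation identifying $W(A_{n-1})^P$; the tower property and the invariance of support are standard inputs, and everything else is boundary-value bookkeeping.
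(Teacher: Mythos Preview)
Your proof is correct and follows essentially the same approach as the paper: the paper's proof consists of the single observation that $\{\lceil i,1\rceil:0\le i\le n-1\}$ is the set of distinguished representatives of $W(A_{n-1})/P$, leaving the tower factorisation and the extremality characterisation implicit. You have simply spelled out the details the paper omits, including the support argument for extremality, so there is nothing to correct.
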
       			

\begin{proof}   The set of elements   $\lceil  i,1 \rceil $ 
for $0 \le i \le n-1$  is the set of distinguished representatives for $ W(A_{n-1})/P$, hence  the statement.
\end{proof}

  As a consequence, we can define what we call  the {\em extremal canonical form}  of any $w\in W(A_n)$: 
	\begin{equation}\label{extremal}
 h(r,i) \lfloor i_1, j_1 \rfloor \lfloor i_2, j_2 \rfloor \dots  \lfloor i_s, j_s \rfloor  
 \end{equation}
with $1\le r \le n+1 $, $0\le i \le n-1 $, $n-1 \ge j_1 > \dots > j_s  \ge 2$ and 
$ j_t \ge i_t \ge 2$ for $s \ge t \ge 1$.  This form could be used everywhere below 
 instead of  the usual canonical form (\ref{Stembridge}).

\subsection{Affine length}  

\begin{definition}\label{ALA}
				We call {\rm	 affine length reduced expression} of a given $u$ in $W(\tilde A_{n})$ any reduced expression with minimal number of occurrences of $a_{n+1}$, and we {\rm call affine length} of $u$ this minimal number, we denote  it by $L(u)$. 
			
			\end{definition}

\begin{remark} The definition of affine length for fully commutative elements was given  in \cite{al2016tower}: for such elements the number of occurrences of $a_{n+1}$ in a reduced expression does not depend  on the reduced expression. 

\end{remark} 

\begin{remark}\label{affinelength} The affine length is constant on the double classes of $W(A_{n})$ in $W(\tilde A_{n})$.  It satisfies, for any $v, w \in W(\tilde A_{n})$: 
$$  | L(v)- L(w) | \le L(vw) \le L(v) + L(w).$$ 
\end{remark}

\begin{lemma}\label{lemmafullA}
Let $w$ be in $W(\tilde A_{n})$ with $L(w) =m \ge 2$. 
Fix  an affine length reduced expression of $w$ as follows: 
$$
w =  u_1 a_{n+1} u_2 a_{n+1} \dots u_m a_{n+1} u_{m+1}  
\   \text{ with }  u_i \in W(A_{n})  \text{ for } 1\le i \le m+1 .  
$$ 
Then $u_2, \cdots, u_m$ are extremal and there is a reduced writing of $w$ of the  form: 
 \begin{equation}\label{forme1A}
w = h(j_1, i_1) a_{n+1} h(j_2, i_2) a_{n+1} \dots  h(j_m, i_m)  a_{n+1} v_{m+1},
 \end{equation} 

\noindent
where   $  v_{m+1}$ is  an element in $ W(A_{n})$,  $ 1\le  j_1 \le n+1$, $ 0\le i_1 \le n-1$, and for $ 2\le  s \le m $, either $i_s=0$ and $j_s=1$, or $ 1\le  i_s \le n-1$ and
$ 1\le  j_s \le n$.     
 \end{lemma}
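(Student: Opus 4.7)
My plan is to first show that each $u_s$ with $2\le s\le m$ must be extremal, by a contradiction exploiting the commutation of $a_{n+1}$ with the inner parabolic $Q:=\langle\sigma_2,\dots,\sigma_{n-1}\rangle$, and then to sweep from left to right, extracting the $h(j_s,i_s)$ factors via Lemma \ref{extremal1} and propagating the residual $Q$-pieces through the $a_{n+1}$'s by commutation.

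For the extremality step, I would suppose some $u_s$, $2\le s\le m$, is not extremal. By Lemma \ref{extremal1} write $u_s=h(r,i)x$ with $x\in Q$; non-extremality forces either $r=n+1$ or ($2\le r\le n$ and $i=0$). Since $x$ commutes with $a_{n+1}$, we have $a_{n+1}u_s a_{n+1}=a_{n+1}h(r,i)a_{n+1}x$. When $r=n+1$ and $i=0$ we have $h(r,i)=1$, so $a_{n+1}u_s a_{n+1}=x$: this strictly shortens the original expression of $w$, contradicting its reducedness. When $r=n+1$ and $i\ge 1$, I would commute $a_{n+1}$ through $\sigma_i,\dots,\sigma_2$ (each in $Q$) until it meets $\sigma_1$ and then apply the braid relation $a_{n+1}\sigma_1 a_{n+1}=\sigma_1 a_{n+1}\sigma_1$, turning $a_{n+1}u_s a_{n+1}$ into $\lceil i,1\rceil a_{n+1}\sigma_1 x$: same length, but only one $a_{n+1}$. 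The symmetric sub-case $2\le r\le n$, $i=0$ uses commutation through $\sigma_r,\dots,\sigma_{n-1}$ and the braid $a_{n+1}\sigma_n a_{n+1}=\sigma_n a_{n+1}\sigma_n$ to give $\lfloor r,n\rfloor a_{n+1}\sigma_n x$. In these last two sub-cases the total length of $w$ is unchanged, but the number of $a_{n+1}$'s drops to $m-1$, contradicting $L(w)=m$.

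For the rewriting step, I would induct from the left. First, apply Lemma \ref{extremal1} to $u_1=h(j_1,i_1)x_1$ with $x_1\in Q$ (no extremality restriction, so $(j_1,i_1)$ ranges freely over $1\le j_1\le n+1$, $0\le i_1\le n-1$), then commute $x_1$ past the first $a_{n+1}$ and absorb it into the next slot as $\tilde u_2:=x_1 u_2$. Because the global expression has not changed length, the subword $x_1 u_2$ is itself reduced, hence $l(\tilde u_2)=l(x_1)+l(u_2)$; concatenating a reduced word for $x_1$ with one for $u_2$ then exhibits a reduced expression for $\tilde u_2$ containing both $\sigma_1$ and $\sigma_n$, and invariance of the generator multiset across reduced expressions forces $\tilde u_2$ to be extremal. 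Lemma \ref{extremal1} then gives $\tilde u_2=h(j_2,i_2)x_2$ with $(j_2,i_2)$ in the extremal range stated in the conclusion. Iterate with $\tilde u_s:=x_{s-1}u_s$ for $s=3,\dots,m$, and collect the final residue as $v_{m+1}:=x_m u_{m+1}\in W(A_n)$ to obtain the expression \eqref{forme1A}.

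I expect the main obstacle to be the case analysis in the extremality step: one must confirm that in every non-extremal configuration, the commutations with $Q$ together with the two boundary braid relations at $\sigma_1$ and $\sigma_n$ really do suffice to strictly cut down the number of $a_{n+1}$'s. This is where the Coxeter diagram of $\tilde A_n$ enters essentially, since $a_{n+1}$ has exactly two neighbours, $\sigma_1$ and $\sigma_n$, and the middle generators $\sigma_2,\dots,\sigma_{n-1}$ absorb harmlessly into $Q$.
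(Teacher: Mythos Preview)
Your proof is correct and follows essentially the same approach as the paper: extremality of the inner $u_s$ via commutation of $a_{n+1}$ with $P$ and the boundary braids at $\sigma_1,\sigma_n$, then a left-to-right sweep using Lemma~\ref{extremal1} and pushing the $P$-residues through successive $a_{n+1}$'s. One small slip: you invoke ``invariance of the generator multiset across reduced expressions'', which is false in general (e.g.\ $\sigma_1\sigma_2\sigma_1=\sigma_2\sigma_1\sigma_2$); what you need, and what suffices, is invariance of the \emph{support set}, which does hold and gives the extremality of $\tilde u_s$ exactly as you argue.
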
    

\begin{proof}
Let $y \in  W(A_{n})$ such that $ a_{n+1} y a_{n+1}$ is  an affine length reduced  expression. 
We use Lemma \ref{extremal1} to write  $y=h(r,i) \; x $  with $ x \in P$. Since $x$ and $a_{n+1}$ 
commute, the element  $ a_{n+1}   h(r,i)   a_{n+1}$ must be affine length reduced. 
Since the braids  
$ a_{n+1} \sigma_1 a_{n+1}$ and $a_{n+1} \sigma_n a_{n+1}$  are to be excluded, both $\sigma_1$ and $\sigma_n$ must appear in $h(r,i)$ so $y$ is extremal.

 Now we proceed from left to right, using Lemma \ref{extremal1}  at each step. We write 
 $u_1= h(j_1, i_1) x_1$ with $x_1 \in P$, so that $u_1 a_{n+1} u_2 = h(j_1, i_1)a_{n+1}  x_1u_2$. We repeat  with $ x_1u_2 a_{n+1}=  h(j_2, i_2)a_{n+1} x_2$   with $x_2 \in P$ and so on, getting (\ref{forme1A}). 	We started with a reduced expression of $w$ so we obtain a reduced expression.
\end{proof}

Yet,  an expression as (\ref{forme1A}) may be reduced without being affine length reduced, as  in the following example: 
$$
a_{n+1} \sigma_n \cdots \sigma_1 a_{n+1} \sigma_1 \cdots \sigma_n a_{n+1} 
=   \sigma_n a_{n+1} \sigma_n \cdots \sigma_1 \cdots \sigma_n a_{n+1} \sigma_n. 
$$

\begin{lemma}\label{Rofw}
An element of affine length $1$ can be written in a unique way as 
$$
h(r,i) a_{n+1} x, \qquad  0 \le  i  \le  n-1, \   1\le r \le n+1, \    x \in W(A_n), 
$$
and such an expression is always reduced. The commutant of $a_{n+1}$ in $W(A_n)$ is $P$. 
 \end{lemma}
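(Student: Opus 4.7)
My plan is to prove, in order, existence of the form, reducedness, uniqueness, and the commutant statement, with the descent-set identity $\mathscr{R}(h(r,i) a_{n+1}) = \{a_{n+1}\}$ as the technical pivot supplying the last two.

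For existence and reducedness, I would begin with any affine length reduced expression $w = u_1 a_{n+1} u_2$ of an element of affine length one. Lemma \ref{extremal1} factors $u_1 = h(r,i) y$ with $y \in P$ and $l(u_1) = l(h(r,i)) + l(y)$; since $P$ commutes with $a_{n+1}$, sliding $y$ across gives $w = h(r,i) a_{n+1} x$ with $x := yu_2 \in W(A_n)$. The chain $l(w) = l(h(r,i)) + l(y) + 1 + l(u_2) \ge l(h(r,i)) + 1 + l(x) \ge l(w)$, using subadditivity twice (first $l(y) + l(u_2) \ge l(yu_2) = l(x)$, then $l(h(r,i) a_{n+1} x) \le l(h(r,i)) + 1 + l(x)$), collapses to equalities, so $h(r,i) a_{n+1} x$ is reduced.

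The main technical step is to show $\mathscr{R}(h(r,i) a_{n+1}) = \{a_{n+1}\}$. The inclusion $\supseteq$ is immediate since $\mathscr{R}(h(r,i)) \subseteq W(A_n) \not\ni a_{n+1}$. If some $\sigma_j$ were also a right descent, the strong exchange condition applied to the reduced word $\sigma_r \cdots \sigma_n \sigma_i \cdots \sigma_1 a_{n+1}$ would express $h(r,i) a_{n+1} \sigma_j$ by deleting one letter; deleting $a_{n+1}$ yields the absurdity $a_{n+1} \sigma_j = 1$, so a $\sigma_k$ is removed and the shortened word still ends with $a_{n+1}$, giving $v := h(r,i) a_{n+1} \sigma_j a_{n+1} \in W(A_n)$. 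For $j \notin \{1, n\}$, the commutation $\sigma_j a_{n+1} = a_{n+1} \sigma_j$ simplifies this to $v = h(r,i) \sigma_j$, forcing $\sigma_j \in \mathscr{R}(h(r,i))$; but Lemma \ref{extremal1} identifies $h(r,i)$ as the distinguished representative mod $P$, so $\mathscr{R}(h(r,i)) \subseteq \{\sigma_1, \sigma_n\}$, contradicting $j \in \{2, \ldots, n-1\}$. For $j \in \{1, n\}$, the order-three braid $a_{n+1} \sigma_j a_{n+1} = \sigma_j a_{n+1} \sigma_j$ rewrites the identity as $v = h(r,i) \sigma_j a_{n+1} \sigma_j$, whence $a_{n+1} = \sigma_j^{-1} h(r,i)^{-1} v \sigma_j^{-1} \in W(A_n)$, contradicting $a_{n+1} \notin W(A_n)$. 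This boundary case is the main obstacle.

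With the descent identity in hand, uniqueness is immediate: two presentations $h(r,i) a_{n+1} x = h(r',i') a_{n+1} x'$ make $h(r,i) a_{n+1}$ and $h(r',i') a_{n+1}$ the unique shortest elements of the same left $W(A_n)$-coset, so they coincide; Lemma \ref{extremal1} then forces $(r,i) = (r',i')$ and finally $x = x'$. The commutant statement follows: the inclusion of $P$ in the commutant is read off the Coxeter diagram, and conversely, given $x \in W(A_n)$ with $xa_{n+1} = a_{n+1} x$, writing $x = h(r,i) y$ with $y \in P$ puts the left side $xa_{n+1} = h(r,i) a_{n+1} y$ in canonical form, while the right side $a_{n+1} x = 1 \cdot a_{n+1} \cdot x$ is already canonical with trivial $h$-part; uniqueness forces $h(r,i) = 1$, so $r = n+1$, $i = 0$, giving $x = y \in P$.
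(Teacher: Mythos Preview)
Your proof is correct and follows essentially the same approach as the paper's: existence via Lemma~\ref{extremal1}, the descent-set identity $\mathscr{R}(h(r,i)a_{n+1})=\{a_{n+1}\}$ via the exchange condition with a case split on $j\in\{1,n\}$ versus $2\le j\le n-1$, and uniqueness together with the commutant statement as immediate consequences. The only cosmetic differences are that the paper handles the middle case by directly commuting $\sigma_j$ past $a_{n+1}$ (rather than routing through the element $v$), and phrases the boundary contradiction as ``impossible considering supports'' rather than $a_{n+1}\in W(A_n)$.
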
 

\begin{proof} 	 The existence of such an expression comes from Lemma \ref{extremal1}. 
Showing that the expression is reduced amounts,  by (\ref{parabolic}), to showing that 
$\mathscr{R} (h(r, i) \  a_{n+1} ) =  \{ a_{n+1}  \}  $. Indeed, if   $2\le k \le n-1$, then  $w \sigma_k= h(r, i)\sigma_k a_{n+1}$ has length $l(w)+1$. Now assume  $k=1$ or $k=n$, and $l(w \sigma_k ) < l(w)$. By the exchange condition there is a 
$\sigma_u$   
appearing in $ h(r,i)  $ 
such that $h(r, i) a_{n+1} \sigma_k=  \hat h(r,i) a_{n+1}$ where   $ \hat h(r,i)$ is what becomes $h(r,i)$ after omitting $\sigma_u$. We multiply by $ a_{n+1}$  on the right and get 
$h(r, i) \sigma_ka_{n+1} \sigma_k=  \hat h(r,i) $,  
impossible  considering supports.

Uniqueness amounts to proving that  $h(j,i) a_{n+1} =  h(j',i') a_{n+1} x$ (with obvious notation) implies 
 $x=1$, immediate from   $\mathscr{R} (h(j,i) a_{n+1}) = \{ a_{n+1}\}$ and (\ref{parabolic}).   The last assertion is a consequence of uniqueness. 
\end{proof}

\begin{definition}
We call {\em affine brick} and denote by $ \mathcal B(r, i) $, or  $ \mathcal B_n(r, i) $ when we need to emphasize the dependency in $n$, the expression 
$$
\mathcal B(r, i) =  h(r,i) a_{n+1} , \qquad  0 \le  i  \le  n-1, \   1\le r \le n+1.   
$$
The length of an affine brick $\mathcal B(r, i)$ is $n+1 + i+1 -r$. 
We call an affine brick {\em short} if its length is at most $n$, i.e. $r>i+1$. 
Otherwise we call it {\em long}. 
\end{definition}  

We will keep in mind that the two segments of a {\em short} affine brick commute: 
$$
\mathcal B(r, i) =  \lfloor r,n \rfloor \lceil  i,1 \rceil a_{n+1} = 
 \lceil  i,1 \rceil\lfloor r,n \rfloor a_{n+1}  \quad  \text{ for } r>i+1.   
$$
Other cases are listed in (\ref{productsof2Legobricks}) below.

\subsection{Affine length reduced expressions}  
 
The property  $\mathscr{R} (h(r, i) \  a_{n+1} ) =  \{ a_{n+1}  \}  $  does not extend to elements in form   (\ref{forme1A}) with $v_{m+1}=1$. For instance, 
the relations :
\begin{equation}\label{braidsleftright}
\begin{aligned} 
\sigma_na_{n+1} \sigma_n  \sigma_1a_{n+1} &=      
a_{n+1} \sigma_n   \sigma_1  a_{n+1}   \sigma_1   \\
\sigma_1 a_{n+1} \sigma_n  \sigma_1a_{n+1} &=    
a_{n+1} \sigma_n   \sigma_1  a_{n+1}   \sigma_n 
\end{aligned}
\end{equation}
imply: 
$\sigma_1 \in  \mathscr{R} ( \sigma_na_{n+1} \sigma_n  \sigma_1a_{n+1} )$ and  $\sigma_n \in \mathscr{R} ( \sigma_1a_{n+1} \sigma_n  \sigma_1a_{n+1} )$. 
So  the general form   (\ref{forme1A}) need not be reduced, we must impose more conditions. As in  Lemma \ref{lemmafullA}, we want  to push to the right the simple reflections 
$\sigma_k$, 
$1\le k \le n$, whenever possible.  To do this we bring out the following formulas:

 \begin{lemma}\label{exchangeformulas}
 Let $1\le r \le n+1$, $0 \le u  \le n-1$, $1\le s \le n$ and $1\le v \le n-1$. We have the following rules.

\begin{enumerate}
\item 
If  $r > u+1$ and   $s \ge r$: 
$ \quad \mathcal B(r,u)  \mathcal B(s, v)    =  \mathcal B(s+1,u)  \mathcal B(r, v)    \sigma_1. $  \\
\item  If  $s > u+1 \ge v+1$: 
$\quad \mathcal B(r,u)  \mathcal B(s, v)    =  \mathcal B(r,v-1)  \mathcal B(s,u)    \sigma_n. $  \\  
\item   If  $v+1 < s \le u+1$  : 
$\quad  \mathcal B(r,u)  \mathcal B(s, v)    =  \mathcal B(r,v-1)  \mathcal B(s-1,u-1)    \sigma_n. $ \\  
\item  If  $s \le v+1$ and   $v <  u$: 
$\quad  \mathcal B(r,u)  \mathcal B(s, v)    =  \mathcal B(r,v)  \mathcal B(s,u-1)    \sigma_n. $ \\   
\item  If  $r \le u+1<s $: 
$ \quad  \mathcal B(r,u)  \mathcal B(s, v)    =  \mathcal B(s+1,u+1)  \mathcal B(r+1,v)    \sigma_1. $ \\  
\item  If  $r < s \le u+1$: 
$ \quad  \mathcal B(r,u)  \mathcal B(s, v)   =  \mathcal B(s,u)  \mathcal B(r+1,v)    \sigma_1. $  

\end{enumerate}   
 \end{lemma}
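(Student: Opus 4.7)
These are six identities in $W(\tilde A_n)$, each of the shape $\mathcal{B}(r,u)\mathcal{B}(s,v) = \mathcal{B}(r',u')\mathcal{B}(s',v')\,\varepsilon$ with $\varepsilon \in \{\sigma_1,\sigma_n\}$. My plan is to verify each one by direct manipulation using only the Coxeter relations available in the group: the commutations $a_{n+1}\sigma_k = \sigma_k a_{n+1}$ for $2\le k \le n-1$; the two braid relations $a_{n+1}\sigma_1 a_{n+1} = \sigma_1 a_{n+1}\sigma_1$ and $a_{n+1}\sigma_n a_{n+1} = \sigma_n a_{n+1}\sigma_n$; the usual $A$-type braid and commutation relations among the $\sigma_i$; and the alternative factorisation $\mathcal{B}(r,u) = \lceil u,1\rceil\lfloor r,n\rfloor a_{n+1}$ valid for a short brick ($r > u+1$), which is recorded just above the lemma.

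The underlying mechanism is uniform. A product $\mathcal{B}(r,u)\mathcal{B}(s,v)$ contains exactly two occurrences of $a_{n+1}$, separated by a block of $\sigma_i$'s. Transport via commutation of $a_{n+1}$ with $\sigma_2,\ldots,\sigma_{n-1}$ brings the two $a_{n+1}$'s into a local configuration of the form $\cdots a_{n+1}\sigma_1 a_{n+1}\cdots$ (for cases (1), (5), (6)) or $\cdots a_{n+1}\sigma_n a_{n+1}\cdots$ (for cases (2), (3), (4)). The relevant braid relation then extracts the solitary $\sigma_1$ or $\sigma_n$ and pushes it to the right; what remains reassembles, after at most one further $A$-type braid at the junction of the two legs $\lfloor\cdot,n\rfloor$ and $\lceil\cdot,1\rceil$, into the claimed product of affine bricks. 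The index shifts $u\mapsto u-1$ and $v\mapsto v-1$ appearing in (3) and (4) are precisely the record of this junction braid.

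I would present the argument as six case-by-case computations. Case (1), where the first brick is short and $s\ge r$, serves as the prototype: one swaps the two legs of $\mathcal{B}(r,u)$ by shortness, pushes the middle $a_{n+1}$ across $\sigma_s\cdots\sigma_{n-1}$ by commutation, applies one $a_{n+1}$-braid, and then performs a symmetric move on the right pair $\lceil v,1\rceil a_{n+1}$. The other five cases follow the same recipe, and the six hypotheses are calibrated so that the pattern of legal commutations and braids is precisely what is needed for one (and only one) solitary simple reflection to emerge on the right.

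The principal obstacle is bookkeeping rather than depth. Each hypothesis (e.g.\ $s\le v+1$ versus $s > v+1$, or $v<u$ versus $v\ge u$) is the minimal condition ensuring that the intermediate rearrangement is legal, that no forbidden $a_{n+1}\sigma_k$ braid is implicitly invoked for $k\notin\{1,n\}$, and that the output indices land in the admissible range ($0\le u'\le n-1$, $1\le r'\le n+1$, and similarly for $s',v'$). Once these indexing issues are pinned down, each identity reduces to a short chain of applications of the relations listed above, which I would exhibit explicitly so that the final $\sigma_1$ or $\sigma_n$ appears visibly isolated on the right.
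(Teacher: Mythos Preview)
Your proposal is correct and matches the paper's approach: both treat the lemma as a direct, case-by-case computation using only the Coxeter relations. The paper packages the $W(A_n)$ part of the calculation into the list of identities \eqref{productsof2Legobricks} together with the shift rules $\lfloor r,s\rfloor\sigma_k=\sigma_{k+1}\lfloor r,s\rfloor$ and $\lceil r,s\rceil\sigma_k=\sigma_{k-1}\lceil r,s\rceil$, whereas you emphasise the $a_{n+1}$-braid step that produces the trailing $\sigma_1$ or $\sigma_n$; these are two descriptions of the same manipulation.
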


\begin{proof} These are straightforward computations based on (\ref{productsof2Legobricks}),    relying on the rules: 

$  \lfloor r,s \rfloor \ \sigma_k  \ = \  \sigma_{k+1} \   \lfloor r,s \rfloor$ if 
$ r \le k < s$  ; 
$\lceil  r,s \rceil   \ \sigma_k  \ = \  \sigma_{k-1} \    \lceil  r,s \rceil  $ if 
$ r \ge k > s$. 
 
\begin{equation}\label{productsof2Legobricks} 
\begin{aligned}
   \lceil  a,1 \rceil   \lfloor b,n \rfloor &=   \lfloor b-1,n \rfloor    \lceil  a-1,1 \rceil      &\text{ if }    1 < b \le a+1 \le n+1 ;  
\\
   \lceil  a,1 \rceil   \lfloor b,n \rfloor &=   \lfloor b,n \rfloor    \lceil  a,1 \rceil      &\text{ if }   n+1 \ge   b > a+1 ;  
\\
   \lceil  a,1 \rceil   \lfloor 1,n \rfloor &=           \lfloor  a+1,n \rfloor     &\text{ if } 0 \le a \le n ;
\\
   \lfloor a,n \rfloor     \lfloor b,n \rfloor   &=     \lfloor b,n \rfloor   \lfloor a-1,n-1 \rfloor&\text{ if } n+1 \ge  a>b\ge 1  ;  
\\  
 \lfloor a,n \rfloor     \lfloor b,n \rfloor   &=     \lfloor b+1,n \rfloor   \lfloor a,n-1 \rfloor&\text{ if } 1\le  a \le b \le n  ;  
\\
  \lceil  a,1 \rceil   \lceil  b,1 \rceil   &=  \lceil  b,1 \rceil  \lceil  a+1, 2 \rceil  &\text{ if }  1 \le a < b; &
\\
  \lceil  a,1 \rceil   \lceil  b,1 \rceil   &=  \lceil  b-1,1 \rceil  \lceil  a, 2 \rceil    &\text{ if }    a \ge b  .   
\end{aligned}
\end{equation}  
We remark that  equalities (1) to (6)  involve expressions of the same length. They are actually all reduced 
(Lemma \ref{casem2reduced}).  
\end{proof}

With this Lemma we can obtain more information about affine length reduced expressions with the leftmost occurrences of $a_{n+1}$. We need a definition.

\begin{definition}\label{pairwise}
 Let $m\ge 1$.  A family of integers $(j_s,i_s)_{1\le s \le m}$ is said to satisfy the {\em pairwise inequalities} if the following conditions hold: 

\begin{enumerate}\label{PI}
\item   $ 1\le  j_1 \le n+1$ and  $ 0\le i_1 \le n-1$; 
 
\item   for $ 2\le  s \le m $,   either $i_s=0$ and $j_s=1$, or $ 1\le  i_s \le n-1$ and
$ 1\le  j_s \le n$;

\item   for $ 2\le  s \le m $, we have  $j_s \le  j_{s-1}   $  and  $i_{s} \ge  i_{s-1} $;

\item   If $j_{s-1} > i_{s-1}+1 $, then   $j_s < j_{s-1}   $;

\item   If  
$j_{s} > i_{s}+1 $ then $i_{s} > i_{s-1} $.  
  
\end{enumerate}  

\end{definition} 

We observe that with these conditions $j_{s} > i_{s}+1 $  implies $j_{s-1} > i_{s-1}+1 $.

\begin{proposition}\label{moreconditions} 
Let $w$ be in $W(\tilde A_{n})$ with $L(w) =m \ge 1$. Among the affine length reduced expressions of $w$: 
$$
w =  u_1 a_{n+1} u_2 a_{n+1} \dots u_m a_{n+1} u_{m+1}  
\   \text{ with }  u_i \in W(A_{n})  \text{ for } 1\le i \le m+1  
$$ 
we fix one with leftmost  occurrences of $a_{n+1}$.  Then, for $1\le s \le m$, there exist integers $j_s$, $i_s$ such that $u_s= h(j_s, i_s)$, 
and the  family of integers $(j_s,i_s)_{1\le s \le m}$  satisfies the {\em pairwise inequalities}.   
 \end{proposition}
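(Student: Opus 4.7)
The plan is to combine Lemma \ref{lemmafullA} (giving the form $u_s = h(j_s,i_s)$ together with conditions (1), (2) of Definition \ref{pairwise}) with the rewriting rules of Lemma \ref{exchangeformulas} (to force (3), (4), (5)). The guiding principle is that each of the six rules in Lemma \ref{exchangeformulas} produces an equality $\mathcal{B}(r,u)\mathcal{B}(s,v) = \mathcal{B}(r',u')\mathcal{B}(s',v')\sigma$ whose new first brick is strictly shorter than $\mathcal{B}(r,u)$. Hence, whenever a rule applies, it moves an occurrence of $a_{n+1}$ strictly to the left while preserving both the total length and the affine length, and this is in direct contradiction with the leftmost assumption on the chosen expression.

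First I would show $u_s = h(j_s, i_s)$ for each $s \le m$. As in the proof of Lemma \ref{lemmafullA}, the braids $a_{n+1}\sigma_1 a_{n+1}$ and $a_{n+1}\sigma_n a_{n+1}$ cannot occur in a reduced expression, so each $u_s$ with $s \ge 2$ is extremal; Lemma \ref{extremal1} then writes $u_s = h(j_s, i_s) x_s$ uniquely with $x_s \in P$, and similarly for $u_1$. If some $x_s$ were nontrivial, I would push it past the $s$-th $a_{n+1}$ using the commutation of $P$ with $a_{n+1}$, producing an affine length reduced expression in which the $s$-th $a_{n+1}$ sits at a strictly smaller position, contradicting leftmost. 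Conditions (1) and (2) of Definition \ref{pairwise} then follow from the parameter ranges of Lemma \ref{extremal1} combined with extremality.

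For (3), (4), (5) I would argue by contradiction: assume a violation at some index $k$ with $2\le k \le m$, and focus on the pair $\mathcal{B}(j_{k-1}, i_{k-1}) \mathcal{B}(j_k, i_k)$. A short case analysis matches each type of violation with exactly one of the six rules of Lemma \ref{exchangeformulas}. The strict inequality $j_k > j_{k-1}$ falls under rule (1), (5) or (6) according to whether the first brick is short and whether $j_k > i_{k-1}+1$; the strict inequality $i_k < i_{k-1}$ falls under rule (2), (3) or (4) according to the position of $j_k$ relative to $i_k+1$ and $i_{k-1}+1$; the remaining failure of (4), namely $j_k = j_{k-1}$ with first brick short, is handled by rule (1) in the boundary case $s = r$; and the remaining failure of (5), namely $i_k = i_{k-1}$ with second brick short, by rule (2) in the boundary case $u = v$. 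In each case, a direct length count on $\mathcal{B}(r,u)$ (of length $n+2+u-r$) confirms that the new first brick is strictly shorter; since the rewrite preserves both total length and the number of $a_{n+1}$'s and leaves the first $k-2$ bricks untouched, the $(k-1)$-th $a_{n+1}$ moves strictly to the left, contradicting leftmost. The main obstacle is the bookkeeping: ensuring that every way in which (3), (4), (5) can fail really matches one of the six rules with the required strict length decrease, with special attention to the two boundary equalities where (4) and (5), rather than (3), are what is violated.
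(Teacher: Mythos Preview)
Your proposal is correct and follows essentially the same strategy as the paper: derive $u_s=h(j_s,i_s)$ and conditions (1),(2) from Lemma~\ref{extremal1} plus the leftmost assumption, then refute any violation of (3),(4),(5) by applying one of the six rules in Lemma~\ref{exchangeformulas} to shift an $a_{n+1}$ leftward. Your organization differs only cosmetically---you sort by type of violation, the paper sorts by whether the first brick is short ($j_{s-1}>i_{s-1}+1$) or long---and your unifying observation that each rule strictly shortens the first brick is exactly the mechanism the paper uses implicitly when it says ``the two $a_{n+1}$ have moved left.''
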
 

\begin{proof}  
 All numbered references below refer to Lemma \ref{exchangeformulas}, used to produce  contradictions  to the assumption  that  occurrences of $a_{n+1}$ are leftmost.

The assertion  $u_s= h(j_s, i_s)$ and the basic conditions on 
$i_s, j_s$   follow directly from Lemma \ref{extremal1}  and Lemma \ref{lemmafullA}. 
  
   We assume $j_{s-1} > i_{s-1}+1 $. If $j_{s-1}=n+1$ (so $s-1=1$), then $j_s< j_{s-1} $.   If
$j_{s-1} \le n  $ and  $j_s \ge j_{s-1}  $, then (1) gives a contradiction since   the two $a_{n+1}  $ have moved left. Hence $j_s < j_{s-1}   $. 

If also $j_{s} > i_{s}+1 $, then $i_s $ cannot be $ 0$ (since $h(j_s, i_s)$ is extremal), so if $i_{s-1}=0$ we have indeed  $i_s >   i_{s-1}   $.  Now   if   $i_{s-1}>0$ and 
$i_s \le  i_{s-1}   $,  (2) gives a contradiction,  whatever the value of   $j_{s-1}$.

We turn to   $j_{s} \le  i_{s}+1 $.  
If $i_{s-1}=0$ we do have   $i_s \ge i_{s-1}$. If   $i_{s-1}>0$ and   $i_s<  i_{s-1}$, (4) gives a contradiction, 
hence $i_s \ge i_{s-1}$.

   We now assume $j_{s-1} \le  i_{s-1}+1 $. If  $j_s > j_{s-1}   $, (5) or (6) gives a contradiction.  
 We conclude that $j_s \le j_{s-1}   $.  Now if  $i_{s} <  i_{s-1} $ we are either in case (3) or in case (4), 
and both give a contradiction, so   $i_{s} \ge  i_{s-1} $. 
\end{proof}

\begin{theorem}\label{AA} 
   Let $m\ge 1$ and let  $(j_s,i_s)_{1\le s \le m}$  be any family of integers satisfying the  pairwise inequalities. 
The expression 
 $$w= \mathcal B(j_1, i_1) \mathcal B(j_2, i_2)   \dots  \mathcal B(j_m, i_m) 
$$ 
 is reduced and  affine length reduced,  and satisfies
$ \  \mathscr{R} (w) =  \{ a_{n+1}  \} . $     

Any   $w$  in $W(\tilde A_{n})$ with $L(w) =m $ can be written uniquely as 
$$w= \mathcal B(j_1, i_1) \mathcal B(j_2, i_2)   \dots  \mathcal B(j_m, i_m)   x
$$
where $(j_s,i_s)_{1\le s \le m}$ satisfies the pairwise inequalities and   $x$  is 
  the canonical reduced expression of an element in  $W(A_n)$. 
Such a form  is reduced:
 	$$
l(w)= l(x) + \sum_{s=1}^m ( n+1+i_s +1-j_s).
$$
We call the expression $ \mathcal B(j_1, i_1) \mathcal B(j_2, i_2)   \dots  \mathcal B(j_m, i_m) $  
 {\em    the   affine block of   $w$.} For any $r$ and $s$ between $1$ and $m$ the pairwise inequalities assure that :
 
$$
l( \mathcal B(j_s, i_s)) = l( \mathcal  B(j_r, i_r))   \iff   \mathcal B(j_s, i_s)=  \mathcal B(j_r, i_r).
$$

Specifically, a  {\em canonical reduced expression} for $w$ is given by: 
\begin{equation}\label{cancan} w= \mathcal B(j_1, i_1) \mathcal B(j_2, i_2)   \dots  \mathcal B(j_m, i_m)    \lfloor k_1, l_1 \rfloor \lfloor k_2, l_2 \rfloor \dots  \lfloor k_t, l_t \rfloor \end{equation}
with $t \ge 0$,  $n \ge l_1 > \dots > l_t  \ge 1$ and 
$ l_h \ge k_h \ge 1$ for $t \ge h \ge 1$.
\end{theorem}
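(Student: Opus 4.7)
The plan is to establish four statements in sequence: (A) that any family satisfying the pairwise inequalities produces a reduced, affine-length-reduced expression with $\mathscr{R}(w)=\{a_{n+1}\}$; (B) existence of such a canonical form for every $w$ with $L(w)=m$, together with the length formula; (C) uniqueness of the canonical form; and (D) the brick length characterization.

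For (B), I would begin with a reduced expression of $w$ having exactly $m$ occurrences of $a_{n+1}$; such an expression exists because reducing any expression achieving $L(w)$ can only decrease the $a_{n+1}$-count, while $L(w)$ is a global lower bound. Lemma \ref{lemmafullA} rearranges this into the shape $h(j_1,i_1) a_{n+1}\cdots h(j_m,i_m) a_{n+1} v_{m+1}$ via length-preserving moves internal to $W(A_n)$. If the pairwise inequalities fail on some consecutive pair, one of the six rules of Lemma \ref{exchangeformulas} applies, yielding an expression with the same length but with $a_{n+1}$'s shifted lexicographically further to the left; iteration terminates by well-foundedness of the lex order, and the final expression satisfies all pairwise inequalities (Proposition \ref{moreconditions}). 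Replacing $v_{m+1}$ by its canonical reduced form in $W(A_n)$ (Theorem \ref{1_2}) and observing that every move preserves length, one obtains the canonical form (\ref{cancan}) as a reduced expression, giving the length formula.

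For (A), I would induct on $m$, with base $m=1$ supplied by Lemma \ref{Rofw}. In the step, let $w' = \mathcal{B}(j_1,i_1)\cdots \mathcal{B}(j_{m-1},i_{m-1})$, reduced with $\mathscr{R}(w') = \{a_{n+1}\}$ and $L(w') = m-1$ by the inductive hypothesis. Property (\ref{parabolic}) gives $l(w' h(j_m,i_m)) = l(w') + l(h(j_m,i_m))$, so it suffices to establish $a_{n+1} \notin \mathscr{R}(w' h(j_m,i_m))$ — this I expect to be the main obstacle. I would argue by contradiction via the strong exchange condition: if $a_{n+1} \in \mathscr{R}(w' h(j_m,i_m))$, some letter of the reduced expression $\mathcal{B}(j_1,i_1)\cdots \mathcal{B}(j_{m-1},i_{m-1}) h(j_m,i_m)$ can be cancelled to yield a reduced expression of $w = w' h(j_m,i_m) a_{n+1}$; tracking this cancellation — either an $a_{n+1}$ from some earlier brick or a $\sigma_k$ — one can invoke Lemma \ref{exchangeformulas} on the last pair of consecutive bricks to derive an equality that forces the pair $(j_{m-1},i_{m-1}),(j_m,i_m)$ to violate one of the pairwise inequalities. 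Once $a_{n+1} \notin \mathscr{R}(w' h(j_m,i_m))$ is proved, reducedness and the length formula follow, $L(w) = m$ is forced (since $L(w) < m$ combined with (B) applied to $w$ would produce a canonical form whose total length contradicts the length formula just derived), and $\mathscr{R}(w) = \{a_{n+1}\}$ follows by checking that $w \sigma_k$ has length $l(w) + 1$ for each $k$, via commutation for $2 \le k \le n-1$ and the braid $a_{n+1} \sigma_k a_{n+1} = \sigma_k a_{n+1} \sigma_k$ for $k \in \{1, n\}$ together with the pairwise inequalities.

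For (C), the condition $\mathscr{R}(w_a) = \{a_{n+1}\}$, where $w_a$ denotes the brick part, combined with (\ref{parabolic}) and Theorem \ref{para} forces the decomposition $w = w_a x$ with $x \in W(A_n)$ to be unique at the level of elements, whence $x$ and its canonical form are unique (Theorem \ref{1_2}). To recover the tuple $(j_s, i_s)$ uniquely from $w_a$, I would observe that a pairwise-inequality expression is the lexicographically leftmost placement of $a_{n+1}$'s among all reduced affine-length-reduced expressions of $w_a$: each rule of Lemma \ref{exchangeformulas} that shifts an $a_{n+1}$ leftward requires violating a pairwise inequality, so no alternative family of $(j_s, i_s)$ can produce the same $w_a$. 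Finally (D) follows from monotonicity: the pairwise inequalities make $(i_s)$ non-decreasing and $(j_s)$ non-increasing, so $s \mapsto i_s - j_s$ is non-decreasing; equal brick lengths $l(\mathcal{B}(j_r, i_r)) = l(\mathcal{B}(j_s, i_s))$ imply $i_r - j_r = i_s - j_s$, which forces both sequences to be constant on $[r,s]$, hence $\mathcal{B}(j_r, i_r) = \mathcal{B}(j_s, i_s)$.
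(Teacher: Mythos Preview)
Your architecture matches the paper's and parts (B) and (D) are fine, but the inductive step in (A) has a genuine gap. You assert that if $a_{n+1}\in\mathscr{R}(w'h(j_m,i_m))$ then the exchange-condition cancellation can be ``tracked'' via Lemma~\ref{exchangeformulas} on the last two bricks to force a violation of the pairwise inequalities for $(j_{m-1},i_{m-1}),(j_m,i_m)$. This localization is unjustified. In fact the paper's induction runs from the \emph{other} end: since the truncated-from-the-left family $(j_2,i_2),\dots,(j_m,i_m)$ also satisfies the pairwise inequalities, the expression $h(j_2,i_2)a_{n+1}\cdots h(j_m,i_m)a_{n+1}$ is reduced by induction, so the hat partner of the rightmost $a_{n+1}$ (in the sense of Lemma~\ref{Bourbaki}) must lie in the \emph{leftmost} segment $h(j_1,i_1)a_{n+1}$, far from the last two bricks. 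Lemma~\ref{exchangeformulas} is a rewriting tool used to show that leftmost-$a_{n+1}$ expressions satisfy the pairwise inequalities (Proposition~\ref{moreconditions}); it does not deliver the converse direction you need. The paper's actual mechanism is Bourbaki's reflection criterion: one computes the reflection $[w_m]\,a_{n+1}\,[w_m]^{-1}$, uses Lemma~\ref{SubcaseA1} to absorb $\lfloor j_m,n\rfloor$ into $h(j_{m-1},i_{m-1})$ while preserving the pairwise inequalities, and thereby rewrites it as a reflection attached to a $\sigma_1$ in a length-$(m-1)$ affine block times an element of $W(A_n)$, reduced by induction --- contradiction. The base $m=2$ needs the explicit deficient-case list of Lemma~\ref{casem2reduced}, and the boundary cases $j_m=1$, $i_m\in\{0,n-1\}$ are handled separately by the Rigidity Lemma~\ref{rigidity}.

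The same gap recurs in your treatment of $\sigma_1,\sigma_n\notin\mathscr{R}(w)$: the braid does convert $t_{w_ma_{n+1}\sigma_1}(\sigma_1)$ into $t_{w_m\sigma_1 a_{n+1}}(a_{n+1})$, but one must then prove that (a variant of) $w_m\sigma_1 a_{n+1}$ is reduced, which again requires Lemma~\ref{SubcaseA1} and the reflection machinery of \S\ref{casesigma1}, not merely ``the pairwise inequalities.'' Your uniqueness argument (C) is also incomplete: Proposition~\ref{moreconditions} says that leftmost-$a_{n+1}$ expressions satisfy the pairwise inequalities, not that every pairwise-inequality expression is the leftmost one; two distinct pairwise-inequality families could a~priori represent the same element. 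The paper instead uses the already-established fact $\mathscr{R}(w)=\{a_{n+1}\}$ to force $x=1$ in any second representation, strip the rightmost $a_{n+1}$, and apply the inductive uniqueness for $w_m$ (\S\ref{ALandunique}).
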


\begin{proof} The existence of such an expression for $w \in W(\tilde A_{n})$  is given by 
Proposition \ref{moreconditions} and Theorem \ref{1_2}. The other assertions require some work, to be done in the next section.  \end{proof}

\begin{corollary}\label{cos}
The set  $\mathscr{B}_n$ of affine blocks  is a full set of reduced expressions for the {distinguished representatives of 
$W(\tilde A_n)/W(A_n)$.}
\end{corollary}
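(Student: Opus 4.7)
The plan is to derive the corollary directly from Theorem \ref{AA}, whose three conclusions already contain all the content needed. The only work left is bookkeeping, so I would not expect any real obstacle: the combinatorial heavy lifting has already been done in establishing the pairwise inequalities and the uniqueness of the factorisation.

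First I would show that every affine block is the distinguished representative of its right coset modulo $W(A_n)$. By Theorem \ref{AA}, for any $\mathbf{w_a}\in\mathscr{B}_n$ (including the empty block corresponding to $m=0$, i.e.\ the identity) the expression $\mathbf{w_a}$ is reduced and satisfies $\mathscr{R}(\mathbf{w_a})=\{a_{n+1}\}$. Using the parabolic equivalence \eqref{parabolic}, this is exactly the statement that $l(\mathbf{w_a}\,x)=l(\mathbf{w_a})+l(x)$ for every $x\in W(A_n)$, which by Theorem \ref{para} characterises $\mathbf{w_a}$ as the (unique) distinguished representative of the coset $\mathbf{w_a}W(A_n)$.

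Next I would establish that the induced map $\mathscr{B}_n\to W(\tilde A_n)/W(A_n)$, sending an affine block to its coset, is a bijection. Surjectivity is immediate from the existence part of Theorem \ref{AA}: any $w\in W(\tilde A_n)$ has the form $w=\mathbf{w_a}\,x$ with $\mathbf{w_a}\in\mathscr{B}_n$ and $x\in W(A_n)$, so $wW(A_n)=\mathbf{w_a}W(A_n)$. For injectivity, suppose two affine blocks $\mathbf{w_a}$ and $\mathbf{w'_a}$ satisfy $\mathbf{w_a}W(A_n)=\mathbf{w'_a}W(A_n)$; then $\mathbf{w_a}=\mathbf{w'_a}\,y$ for some $y\in W(A_n)$. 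Viewing this as a decomposition of the element $\mathbf{w_a}$ as (affine block)$\cdot$(element of $W(A_n)$), and comparing with the trivial decomposition $\mathbf{w_a}=\mathbf{w_a}\cdot 1$, the uniqueness clause of Theorem \ref{AA} forces $\mathbf{w'_a}=\mathbf{w_a}$ and $y=1$.

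Combining these two points yields the corollary: each coset of $W(\tilde A_n)/W(A_n)$ contains exactly one affine block, and that block, read as a word, is a reduced expression of the distinguished representative of the coset.
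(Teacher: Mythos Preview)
Your proposal is correct and matches the paper's treatment: the corollary is stated without a separate proof, as an immediate consequence of Theorem~\ref{AA}, and your argument simply spells out the routine deduction from the three conclusions of that theorem (reducedness, $\mathscr{R}(w_a)=\{a_{n+1}\}$, and uniqueness of the decomposition). One tiny quibble: for the empty block ($m=0$) you have $\mathscr{R}(1)=\emptyset$, not $\{a_{n+1}\}$, but the identity is trivially the distinguished representative of the trivial coset, so this does not affect the argument.
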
 

  We remark that in an affine block, the affine brick on the left (resp. on the right) of a short affine brick of length $t$ has length at most $t-2$ (resp. at least $t+1$), while the lengths of long affine bricks form a non-decreasing sequence from  left to right.
{
\begin{remark}{\rm 
We produced   a canonical reduced expression for   fully commutative elements  of $W(\tilde A_n)$ in \cite{al2016tower}. It is indeed the same as the expression above up to 
a slight difference in notation: in \cite{al2016tower} we put 
$h(i,r)= \sigma_i \cdots \sigma_1 \sigma_r \cdots \sigma_n$ (which we write extensively because the notations  $\lfloor -,  -\rfloor $ and $\lceil -,-\rceil$ are also used differently in both papers). With \eqref{productsof2Legobricks} it is easy to 
go from one notation to the other.   
}
\end{remark}
}

\section{Proof of Theorem \ref{AA}}

\subsection{Skeleton of the proof}\label{3.1}

Let    $j_s$, $i_s$, $1\le s \le m$, be 
 any family of integers satisfying the pairwise inequalities in Definition \ref{pairwise}.  It suffices to prove what we call for short the {\it key statement}:  \\   

{\it    The expression  $  \  w= h(j_1, i_1) a_{n+1} h(j_2, i_2) a_{n+1} \dots  h(j_m, i_m)  a_{n+1} \  $  is reduced and  affine length reduced, and satisfies
$ \  \mathscr{R} (w) =  \{ a_{n+1}  \} $. Furthermore it is the unique such expression of $w$ satisfying the conditions in Theorem \ref{AA}. } \\   

By (\ref{parabolic})  our key statement is equivalent to  the following set of six statements, letting
 $$w_m= h(j_1, i_1) a_{n+1} h(j_2, i_2) a_{n+1} \dots  h(j_m, i_m)  :$$  
\begin{enumerate}
\item The expression $w_m a_{n+1} $ is reduced. 
\item The expression $w_m a_{n+1} \sigma_k $ is reduced for $2\le k \le n-1$.
\item The expression $w_m a_{n+1} \sigma_1 $ is reduced. 
\item The expression $w_m a_{n+1} \sigma_n$ is reduced. 
\item  The element expressed by $w_m a_{n+1} $ has affine length $m$. 
\item The expression $w_m a_{n+1} $ is unique with the given conditions. 
\end{enumerate} 

\medskip  

Our main tool is the criterion  given in Bourbaki   \cite[Ch. IV, \S 1.4]{Bourbaki_1981}. Given a Coxeter system $(W,S)$, we attach to any finite sequence 
$\mathbf s = (s_1, \cdots, s_r)$ of elements in $S$,
 the  sequence $t_\mathbf s= (t_\mathbf s(s_1), \cdots, t_\mathbf s(s_r))$ of elements in $W$ defined by: 
$$ 
t_\mathbf s(s_j)= (s_1\cdots s_{j-1}) \ s_j \   (s_1\cdots s_{j-1})^{-1}   \qquad \text{ for } 1 \le j \le r .   
$$
We call  $t_\mathbf s(s_j)$ the {\it reflection attached to $s_j$} (in the expression $\mathbf s$).
We  shorten the notation sometimes  by writing the expression on the left  into brackets and writing $ [  \dots    ]^{-1}$ for its inverse, namely we write:    
$$ 
t_\mathbf s(s_j)= [s_1\cdots s_{j-1}] \ s_j \   [ \dots  ]^{-1}  .   
$$

 We know  from   \cite[Ch. IV, \S 1, Lemma 2]{Bourbaki_1981}   that  the product  $s_1 \cdots s_r$ is a reduced expression (of the element  $s_1 \cdots s_r$ in $W$)   if and only if   all terms in the sequence  $t_\mathbf s$  are distinct. We will  use this in  the following form: 

\begin{lemma}\label{Bourbaki}
  Let $\mathbf s = (s_1, \cdots, s_r)$ be a sequence of elements in $S$. Assume  that   $s_1 \cdots s_{r-1}$ is a reduced expression. The expression  $s_1 \cdots s_{r}$ is not reduced   if and only if 
there exists $j$,  
$1\le j \le r-1$, such that   $t_\mathbf s(s_j)=t_\mathbf s(s_r)$. Such an integer $j$, if it exists,  is  unique.
\end{lemma}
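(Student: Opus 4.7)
The plan is to deduce this lemma essentially directly from the Bourbaki criterion recalled just above it: a word $s_1 \cdots s_r$ is reduced if and only if the associated reflections $t_{\mathbf s}(s_1),\dots,t_{\mathbf s}(s_r)$ are pairwise distinct. The hypothesis that $s_1 \cdots s_{r-1}$ is reduced then gives, via this criterion, that the first $r-1$ reflections $t_{\mathbf s}(s_1),\dots,t_{\mathbf s}(s_{r-1})$ are already pairwise distinct, which is the single observation doing all the work.

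For the forward direction, I would assume that $s_1 \cdots s_r$ is not reduced. By the Bourbaki criterion there must exist indices $1 \le i < j \le r$ with $t_{\mathbf s}(s_i) = t_{\mathbf s}(s_j)$. Since the reflections indexed by $1,\dots,r-1$ are pairwise distinct, the only way to produce a coincidence is $j=r$; thus there exists $j \in \{1,\dots,r-1\}$ with $t_{\mathbf s}(s_j) = t_{\mathbf s}(s_r)$, as required. For the converse direction, the existence of such a $j$ immediately produces two equal reflections among $t_{\mathbf s}(s_1),\dots,t_{\mathbf s}(s_r)$, so the same criterion yields that $s_1 \cdots s_r$ is not reduced.

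For uniqueness, suppose two distinct indices $j, j' \in \{1,\dots,r-1\}$ both satisfy $t_{\mathbf s}(s_j) = t_{\mathbf s}(s_r) = t_{\mathbf s}(s_{j'})$. Then $t_{\mathbf s}(s_j) = t_{\mathbf s}(s_{j'})$ is a coincidence within the first $r-1$ reflections, contradicting the fact (again from the Bourbaki criterion applied to $s_1 \cdots s_{r-1}$) that those reflections are pairwise distinct. Hence at most one such $j$ exists.

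There is no real obstacle here: the whole argument is a one-line bookkeeping reduction of \cite[Ch.~IV, \S 1, Lemma 2]{Bourbaki_1981} to the situation where the word is already reduced up to the penultimate letter. The only care needed is to quote the Bourbaki statement in the appropriate form and to keep the indexing conventions (the fact that $t_{\mathbf s}(s_j)$ depends on all of $s_1,\dots,s_{j-1}$, not just on $s_j$) consistent throughout.
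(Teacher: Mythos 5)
Your proof is correct and matches the paper's intent exactly: the paper states this lemma as an immediate reformulation of the Bourbaki criterion (that a word is reduced iff the attached reflections are pairwise distinct), and your bookkeeping argument — the first $r-1$ reflections are already distinct, so any coincidence must involve index $r$, and uniqueness follows for the same reason — is precisely the reasoning the paper leaves implicit. No gaps.
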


We remark from the proof in  \cite{Bourbaki_1981} that having  $t_\mathbf s(s_j) =t_\mathbf s(s_r)$ for some $j  \le r-1$ is equivalent to the equality 
$s_1 \cdots s_{j} \cdots s_{r}= s_1 \cdots \hat s_{j}  \cdots \hat s_{r}$ in $W$, where the hat $\hat s_{j} $ over $s_j$ means that $s_j$ is removed from the expression.   We call  for short  the $j$-th element 
$s_j$ of the sequence  the   {\it {\em hat partner}} of $s_r$.

\medskip 
We illustrate the use of this Lemma with the following statement:

\begin{lemma}\label{wplemma}
Let $w \in  W(\tilde A_n) $ and $p \in P$ such that $wp$ is reduced. Then 
$w p a_{n+1}$ is reduced if and only if $w a_{n+1}$ is reduced. 
\end{lemma}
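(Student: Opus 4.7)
The plan is to combine two observations: (i) every $p\in P$ commutes with $a_{n+1}$, since $P$ is generated by $\sigma_2,\dots,\sigma_{n-1}$, none of which is adjacent to $a_{n+1}$ in the Dynkin diagram of $\tilde A_n$; and (ii) the Bourbaki criterion (Lemma \ref{Bourbaki}) characterises reducedness of a word through its sequence of attached reflections. So $wpa_{n+1}=wa_{n+1}p$, and I intend to compare the sequences of attached reflections in $wa_{n+1}$ and $wpa_{n+1}$ term by term.

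Concretely, I would fix reduced expressions $w=s_1\cdots s_q$ and $p=t_1\cdots t_k$. Since $wp$ is reduced, $s_1\cdots s_q t_1\cdots t_k$ is a reduced expression for $wp$, and I would apply Lemma \ref{Bourbaki} to the word $s_1\cdots s_q t_1\cdots t_k\,a_{n+1}$. The reflection attached to the final $a_{n+1}$ is
\[
R := (wp)\,a_{n+1}\,(wp)^{-1} \;=\; w\,a_{n+1}\,w^{-1},
\]
using that $p$ commutes with $a_{n+1}$. This is exactly the reflection attached to $a_{n+1}$ in the shorter expression $s_1\cdots s_q a_{n+1}$.

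One direction is then immediate: the reflection attached to $s_j$ depends only on the prefix $s_1\cdots s_{j-1}$, so it coincides in both expressions; if $wpa_{n+1}$ is reduced (no earlier attached reflection equals $R$), then in particular no $t_\mathbf s(s_j)$ equals $R$, which says $wa_{n+1}$ is reduced. For the converse, assume $wa_{n+1}$ is reduced; I must rule out that some $t_j$ is a hat partner of $a_{n+1}$. If it were, the attached reflection of $t_j$ in $wp\,a_{n+1}$ would equal $R$, that is
\[
(w t_1\cdots t_{j-1})\,t_j\,(w t_1\cdots t_{j-1})^{-1} \;=\; w\,a_{n+1}\,w^{-1}.
\]
Conjugating by $w^{-1}$ gives $(t_1\cdots t_{j-1})\,t_j\,(t_1\cdots t_{j-1})^{-1} = a_{n+1}$, whose left-hand side lies in $P\subset W(A_n)$ while $a_{n+1}\notin W(A_n)$ (for instance because $L(a_{n+1})=1$ whereas $L$ vanishes on $W(A_n)$). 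This contradiction completes the proof.

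There is no genuine obstacle here; the only thing to be careful about is to recognise that the equality $R = w a_{n+1} w^{-1}$ persists under inserting $p$ (this is the commutation) and that the bad clash confined to the $t_j$'s is automatically forbidden by the parabolic obstruction $a_{n+1}\notin W(A_n)$.
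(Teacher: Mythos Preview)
Your proof is correct and follows essentially the same approach as the paper: both use the Bourbaki criterion (Lemma \ref{Bourbaki}) together with the commutation of $p$ with $a_{n+1}$ to identify the reflection attached to $a_{n+1}$ as $w a_{n+1} w^{-1}$ in both expressions. The paper proceeds by induction on $l(p)$, reducing to $p=\sigma_k$, and then asserts that the hat partner must lie ``actually in $\mathbf w$'' without spelling out why; you instead treat general $p$ at once and make explicit the parabolic obstruction $(t_1\cdots t_{j-1})t_j(t_1\cdots t_{j-1})^{-1}\in P$ versus $a_{n+1}\notin W(A_n)$, which is exactly the reason behind that assertion.
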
  
\begin{proof} The proof by  induction on the length of $p$ is immediate once the length $1$ case is established.  
Assume $w \sigma_k$ is reduced for some $k$, $ 2\le k \le n-1$ and pick a reduced expression $\mathbf w$ for $w$. From Lemma \ref{Bourbaki}, we see that 
$w \sigma_k a_{n+1}$ is not reduced iff there is a simple reflection $s$ in $\mathbf{ w  \sigma_k}$, actually in $\mathbf w$,  such that 
$t_{\mathbf{w \sigma_k a_{n+1}}}(a_{n+1})= t_{\mathbf{w \sigma_k a_{n+1}}}(s)$. Since $\sigma_k$ commutes with $a_{n+1}$ this equality reads exactly $t_{\mathbf{w  a_{n+1}}}(a_{n+1})= t_{\mathbf{w  a_{n+1}}}(s)$ for some $s $ in $\mathbf w$, which is equivalent to $w a_{n+1}$ being not reduced.  
\end{proof}

The proof of  Theorem \ref{AA}, translated into the set of  statements  (1) to (6) above, proceeds by induction on $m$.  The key statement holds 
for $m=1$:  it   is given by 
Lemma \ref{Rofw},  uniqueness follows from Lemma \ref{extremal1}. In subsections \ref{maincomputation} to \ref{ALandunique} we let 
 $m\ge 2$ and, assuming that properties (1) to (6) hold for $w_k$ for any $k \le m-1$, 
 we prove successively properties (1) to  (6) for $w_m$. To do this we  rely on Lemma \ref{Bourbaki}:   
we start with a sequence $\mathbf d = (s_1, \cdots, s_r)$ and a simple reflection $s$ such that 
the expression  $s_1 \cdots s_r$ is reduced and we want to show that   $s_1 \cdots s_r s $ is also 
reduced. We
  transform the reflection  $t_\mathbf d(s) $    attached to $s$ in the expression  $s_1 \cdots s_r s $
 into the reflection attached to some simple reflection  $s'$  in another expression 
$s'_1 \cdots s'_k s' $  which is known to be reduced by induction hypothesis.  

  We recall (\ref{braidsleftright}) and  Proposition \ref{moreconditions}: we need  the pairwise inequalities. In other words: there will be computation,  mostly contained in preliminary lemmas. 
{Detailed proofs are available in \cite{Sadek_2021}, so we have omitted some of them below.
 Alternatively, an anonymous  referee suggested  to construct a proof of Theorem \ref{AA} based on Lemma \ref{automaton} and on the general Theorem on left multiplication proved by du Cloux 
 \cite[Theorem 2.6]{duCloux_transducer}. }

\subsection{Rigidity Lemma}
We start with an important Lemma. 
\begin{lemma}[Rigidity Lemma]\label{rigidity}  
Let $w=u \sigma_1 \cdots \sigma_n$ be reduced: $l(w)=l(u)+n$, with $u \in W(\tilde A_n)$. 
Then $ a_{n+1} $ does not belong to $ \mathscr{R} (w)$, in other words  
$u \sigma_1 \cdots \sigma_n a_{n+1} $ is reduced. 
\end{lemma}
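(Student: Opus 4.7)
The plan is to argue by contradiction using the criterion in Lemma~\ref{Bourbaki}. Fix a reduced expression $\mathbf{u}$ of $u$, so that $\mathbf{u}\,\sigma_1\cdots\sigma_n$ is a reduced expression of $w$, and suppose for contradiction that $\mathbf{u}\,\sigma_1\cdots\sigma_n\,a_{n+1}$ is not reduced. Lemma~\ref{Bourbaki} then supplies a unique hat partner of the final $a_{n+1}$ among the letters of $\mathbf{u}\,\sigma_1\cdots\sigma_n$, and I split into two cases according to whether this partner lies in the suffix $\sigma_1\cdots\sigma_n$ or inside $\mathbf{u}$.

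If the hat partner is some $\sigma_k$ in the suffix, removing $\sigma_k$ and $a_{n+1}$ yields the same element $w\,a_{n+1}$, so cancelling $u$ on the left gives the identity $\sigma_1\cdots\widehat{\sigma_k}\cdots\sigma_n=\sigma_1\cdots\sigma_n\,a_{n+1}$. Rearranging, $a_{n+1}=(\sigma_1\cdots\sigma_n)^{-1}\sigma_1\cdots\widehat{\sigma_k}\cdots\sigma_n$ would lie in the parabolic subgroup $W(A_n)$, contradicting the fact that the Coxeter generator $a_{n+1}$ does not belong to $W(A_n)$.

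The main obstacle is the remaining case, where the hat partner lies inside $\mathbf{u}$. Removing it produces a reduced expression of some $u'$ with $l(u')=l(u)-1$ satisfying $u'\sigma_1\cdots\sigma_n=u\sigma_1\cdots\sigma_n\,a_{n+1}$; equivalently, $u'=u\cdot r$ with $r=v\,a_{n+1}\,v^{-1}$ and $v=\sigma_1\cdots\sigma_n$, so that right multiplication by the reflection $r$ strictly decreases the length of $u$. To reach a contradiction I identify the positive affine root attached to $r$. Writing $\alpha_1,\ldots,\alpha_n$ for the simple roots of $A_n$, $\theta=\alpha_1+\cdots+\alpha_n$ for the highest root, $\delta$ for the null imaginary root and $\alpha_0=\delta-\theta$ for the affine simple root attached to $a_{n+1}$, an inductive application of $\sigma_n,\sigma_{n-1},\ldots,\sigma_1$ yields $v(\alpha_0)=\delta+\alpha_1$. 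Since $uv$ is reduced, in particular the prefix $u\sigma_1$ is reduced, hence $u(\alpha_1)>0$ as a positive affine real root; adding $\delta$ to such a root preserves positivity, so $u(\delta+\alpha_1)=\delta+u(\alpha_1)>0$. By the standard equivalence $l(u\,s_\beta)<l(u)\Leftrightarrow u(\beta)<0$ for a reflection through a positive root $\beta$, this forces $l(ur)>l(u)$, contradicting $l(u')=l(u)-1$ and ruling out this case.
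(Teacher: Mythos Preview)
Your proof is correct, and at its heart it coincides with the paper's short argument: both compute $v(\alpha_0)=\delta+\alpha_1$ for $v=\sigma_1\cdots\sigma_n$ and use that $u\sigma_1$ reduced forces $u(\alpha_1)>0$, hence $\delta+u(\alpha_1)>0$. The paper simply applies the standard criterion $l(ws)>l(w)\Leftrightarrow w(\alpha_s)>0$ directly to $w=uv$ and $s=a_{n+1}$, obtaining $w(\alpha_{n+1})=u(\alpha_1)+\sum_{i}\alpha_i>0$ in one line. Your route through Lemma~\ref{Bourbaki}, the hat-partner case split, and the reflection $r=va_{n+1}v^{-1}$ reaches the same inequality but less directly: Case~1 is in fact subsumed by the root argument (a hat partner anywhere is equivalent to $w(\alpha_{n+1})<0$), so the split is not needed. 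Nothing is wrong, only longer than necessary.
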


{
 A proof by induction on $l(u)$ can be found in \cite{Sadek_2021}. We sketch the elegant   short proof provided by a referee, whom we thank: it is enough to show that $w(\alpha_{n+1})$ is a positive root, where $\alpha_{n+1}$ is the simple root attached to $a_{n+1}$. But one checks that, with $\alpha_{i}$ is  simple root attached to $\sigma_i$: 
$$w(\alpha_{n+1})= u(\alpha_1)+ \sum_{i=1}^{n+1} \alpha_i $$
and $u(\alpha_1)$ is positive since $u \sigma_1$ is reduced. }

\medskip 
{
Our proof in \cite{Sadek_2021} uses   another Lemma, of independent interest and 
easily proved by induction on the length: }

\begin{lemma}\label{THErigidelement}
Let $u $ be an element of $W(\tilde A_n)$ of length $r \ge 2$ such that all reduced expressions of $u$ 
end with  $\sigma_n a_{n+1}$ (on the right). Then $u$ is rigid (has a unique reduced expression) and is a left truncation of  
\begin{equation}\label{rigid}
(\sigma_1 \cdots \sigma_n a_{n+1}) ^k    \qquad (k \ge 1) ,  
\end{equation}
which is a rigid hence reduced    expression.  
\end{lemma}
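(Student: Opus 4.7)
The plan is to prove the Lemma by induction on $r = \ell(u) \ge 2$. The base case $r = 2$ is immediate: the hypothesis forces $u = \sigma_n a_{n+1}$, which is rigid and equal to the length-$2$ suffix of $\mathbf{c} := \sigma_1 \sigma_2 \cdots \sigma_n a_{n+1}$. So the real work is in the inductive step, where the strategy is to peel off letters one at a time, showing at each stage that the next letter to the left is uniquely forced to match the letter preceding the current tail in the infinite Coxeter word $\mathbf{c}^\infty$.

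Concretely, for $r \ge 3$, pick any reduced expression $u = v \sigma_n a_{n+1}$, with $\ell(v) = r - 2$. The crux is to show that every reduced expression of $v$ ends in $\sigma_{n-1}$, i.e., in the letter preceding $\sigma_n a_{n+1}$ within $\mathbf{c}$. I would argue by contradiction, ruling out each alternative rightmost letter $t$ of a reduced expression of $v$: the case $t = \sigma_n$ would force $u a_{n+1} = v \sigma_n$ to have length strictly less than $r-1$; the case $t = \sigma_i$ for $2 \le i \le n-2$ makes $\sigma_i$ commute with both $\sigma_n$ and $a_{n+1}$, producing a reduced expression of $u$ ending in $\sigma_i$; the case $t = \sigma_1$ (with $n \ge 3$) uses that $\sigma_1$ commutes with $\sigma_n$ in the cycle to yield a reduced expression of $u$ ending in $\sigma_1 a_{n+1}$; and the case $t = a_{n+1}$ is ruled out by the braid relation $a_{n+1} \sigma_n a_{n+1} = \sigma_n a_{n+1} \sigma_n$, which produces a reduced expression of $u$ ending in $\sigma_n$ rather than $a_{n+1}$. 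The special case $n = 2$ (where $\sigma_{n-1} = \sigma_1$) collapses some of these subcases but is treated identically. Consequently $v = v' \sigma_{n-1}$ with $v'$ uniquely determined of length $r-3$; if $v' = 1$ then $u$ is the length-$3$ suffix of $\mathbf{c}$ and we are done, otherwise we iterate the same peeling argument, stripping $\sigma_{n-1}$, then $\sigma_{n-2}$, and so on.

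The main obstacle is the bookkeeping at the corners of $\mathbf{c}^k$: passing, while peeling, from $\sigma_1$ to $a_{n+1}$ (entering a previous copy of $\mathbf{c}$) and from $a_{n+1}$ to $\sigma_n$. At these transitions commutations are replaced by the braid relations $\sigma_1 a_{n+1} \sigma_1 = a_{n+1} \sigma_1 a_{n+1}$ and $\sigma_n a_{n+1} \sigma_n = a_{n+1} \sigma_n a_{n+1}$, and one must verify carefully that no alternative braid move yields a reduced expression of $u$ violating the prescribed ending. Once the iteration terminates, $u$ is exhibited as a suffix of $\mathbf{c}^k$ for some $k \ge 1$, and this suffix is, by construction of the peeling, the unique reduced expression of $u$. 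To close the argument I would verify directly that $\mathbf{c}^k = (\sigma_1 \sigma_2 \cdots \sigma_n a_{n+1})^k$ is itself rigid: at every potentially braidable triple within $\mathbf{c}^k$ the required third letter is absent, and any two commuting generators occurring in $\mathbf{c}^k$ are separated by non-commuting material, so no elementary move can be applied anywhere in the word.
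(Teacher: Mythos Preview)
The paper gives no proof beyond the remark that the lemma is ``easily proved by induction on the length'', and your peeling argument is precisely such an induction; the first-step case analysis you give is correct, and the subsequent peels (including the corners) do work out as you indicate. If you want to avoid the corner bookkeeping entirely, note that after your first peel the element $u' = u\,a_{n+1}$ has length $r-1$ with all reduced expressions ending in $\sigma_{n-1}\sigma_n$, so applying the cyclic Dynkin automorphism of $\tilde A_n$ puts you exactly back in the inductive hypothesis for length $r-1$.
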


\begin{remark} The two lemmas above clearly hold when replacing 
$ \sigma_1 \cdots \sigma_n$ by $ \sigma_n \cdots \sigma_1$, using the Dynkin automorphism of $A_n$.  
\end{remark}

\subsection{A few more lemmas}  We proceed with more lemmas needed in the proof.   

\begin{lemma}\label{almostrigid}
The expression $ D= a_{n+1}  \sigma_{1} \cdots \sigma_n   \cdots \sigma_1 a_{n+1}$  is reduced  and    affine length  reduced.  
\end{lemma}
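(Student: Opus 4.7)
The plan is to show two things: (a) $D$ is reduced, and (b) $L(D)=2$, so that the expression itself already realizes the minimal count of $a_{n+1}$'s.

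For (a), set $w := a_{n+1}\,h(1,n-1) = a_{n+1}\,\sigma_1\sigma_2\cdots\sigma_n\sigma_{n-1}\cdots\sigma_1$. Since the word for $h(1,n-1)$ is palindromic, $h(1,n-1)$ equals its own inverse, so $w^{-1} = h(1,n-1)\,a_{n+1} = \mathcal{B}(1,n-1)$, a reduced affine brick of length $2n$ by Lemma \ref{Rofw}. Hence $w$ is reduced of length $2n$; recognizing its last $n$ letters as $\sigma_n\sigma_{n-1}\cdots\sigma_1$, the Dynkin-dual version of the Rigidity Lemma (see the Remark following Lemma \ref{THErigidelement}) gives that $w\,a_{n+1} = D$ is reduced of length $2n+1$.

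For (b), $L(D)\le 2$ is visible from the expression, so only $L(D)\ge 2$ needs proof, i.e., $D\notin W(A_n)\cup W(A_n)\,a_{n+1}\,W(A_n)$. I would invoke the standard realization of $W(\tilde A_n)$ as affine permutations of $\mathbb{Z}$ with period $N := n+1$, in which $\sigma_i$ swaps $i$ and $i+1$ periodically and $a_{n+1}=s_0$ has window $[0,2,3,\ldots,N-1,N+1]$. A short computation yields
\[
D \;=\; [-N,\,2,\,3,\,\ldots,\,N-1,\,2N+1].
\]
Any $v\in W(A_n)$ is a permutation of $\{1,\ldots,N\}$, whose window values lie in $\{1,\ldots,N\}$; and any $u_1\,a_{n+1}\,u_2$ with $u_1,u_2\in W(A_n)$ has window values in $u_1(\{0,2,\ldots,N-1,N+1\})\subseteq\{1-N,\ldots,2N\}$, since $u_1\in S_N$ is extended periodically. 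The entry $2N+1$ in the window of $D$ lies outside both of these ranges, so $L(D)\ge 2$.

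The main obstacle is step (b), since the affine-permutation model is standard but not set up explicitly in the paper. A purely Coxeter-theoretic alternative uses the semidirect decomposition $W(\tilde A_n) = W(A_n)\ltimes Q^\vee$: recognizing $h(1,n-1) = s_\theta$ (the reflection in the highest root of $A_n$) and $a_{n+1} = s_\theta\,t_{-\theta^\vee}$, one computes $D = s_\theta\,t_{-2\theta^\vee}$; since $W(A_n)$ acts by isometries on $Q^\vee$, the translation $-2\theta^\vee$ cannot equal $u_2^{-1}(-\theta^\vee)$ for any $u_2\in W(A_n)$, ruling out $L(D)=1$, while the nonzero translation rules out $L(D)=0$.
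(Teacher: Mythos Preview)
Your proof is correct. Note, however, that the paper actually omits the proof of this lemma, referring instead to \cite{Sadek_2021} for details, so there is no in-paper argument to compare against directly.

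That said, your part (a) stays entirely within the paper's toolkit: you invoke Lemma~\ref{Rofw} to see that $\mathcal{B}(1,n-1)=h(1,n-1)a_{n+1}$ is reduced, take inverses, and then apply the dual Rigidity Lemma (the Remark after Lemma~\ref{THErigidelement}). This is almost certainly the intended route, and the verification that $l(a_{n+1}\sigma_1\cdots\sigma_{n-1})=n$ (needed for the hypothesis $l(w)=l(u)+n$) is immediate since this is a prefix of the reduced word~$w$.

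Your part (b), by contrast, steps outside the paper's self-contained combinatorial framework. Both the affine-permutation window computation and the translation-lattice argument are correct (your window $[-N,2,\ldots,N-1,2N+1]$ is right, and the bound $\{1-N,\ldots,2N\}$ for windows of affine-length-$\le 1$ elements is sharp), and the root-system version identifying $D=s_\theta t_{-2\theta^\vee}$ is a clean alternative. The paper, aiming to be ``self contained and accessible,'' would presumably argue (b) differently---most likely via a support/commutant argument in the spirit of Lemma~\ref{Rofw}: if $L(D)\le 1$ then $a_{n+1}h(1,n-1)a_{n+1}=u_1a_{n+1}u_2$ with $u_i\in W(A_n)$, forcing $h(1,n-1)$ to conjugate $a_{n+1}$ into $W(A_n)a_{n+1}W(A_n)$ in a way that contradicts the structure of that double coset. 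Your external arguments are shorter and more conceptual; the price is importing machinery the paper deliberately avoids.
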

{
 \begin{proof} Omitted. 
\end{proof}  
}

\begin{lemma}\label{casem2reduced}
 We consider an expression of the following form: 
$$
h(j_1,i_1) a_{n+1} h(j,i)a_{n+1}, \qquad  0 \le  i_1,i  \le  n-1, \   1\le j_1,j  \le n+1,  
$$
with   $h(j,i)\ne 1$. This expression is reduced except in the  four ``deficient''  cases listed below together with the hat partner of  the rightmost $a_{n+1}$:   
\begin{enumerate}
\item  $h(j,i)=  \lceil  i,1 \rceil $ and $i_1 \ge i \ge 1$, 

the hat partner is the $\sigma_i$ in 
$h(j_1,i_1) =  \lfloor j_1, n  \rfloor \sigma_{i_1} \cdots \sigma_i \cdots \sigma_1$; 
\item $h(j,i)=  \lfloor j, n  \rfloor$ and $1<j \le n $, $j_1\le j$, $i_1 < j-1$, 

the hat partner is the $\sigma_{j}$ in 
$h(j_1,i_1) =   \sigma_{j_1} \cdots \sigma_{j} \cdots \sigma_n  \lceil  i_1,1 \rceil $; 
\item $h(j,i)=  \lfloor j, n  \rfloor$ and $2<j \le n $, $j_1<j$, $i_1 \ge j-1$, 

the hat partner is the $\sigma_{j-1}$ in 
$h(j_1,i_1) =   \sigma_{j_1} \cdots \sigma_{j-1} \cdots \sigma_n  \lceil  i_1,1 \rceil $; 
\item $h(j,i)=  \lfloor 2, n  \rfloor$ and  $j_1=1$, $i_1 =1$, 

 the hat partner is the leftmost  $\sigma_{1}$ in 
$h(j_1,i_1) =   \sigma_{1} \cdots  \sigma_n \sigma_1$.  
\end{enumerate}
In particular, if $h(j,i)$ is extremal, the expression is reduced. 
\end{lemma}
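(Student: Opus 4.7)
The plan is to apply Bourbaki's criterion (Lemma \ref{Bourbaki}) at the rightmost $a_{n+1}$. First, Lemma \ref{Rofw} gives that $h(j_1,i_1)a_{n+1}$ is reduced with right descent set $\{a_{n+1}\}$, so by \eqref{parabolic} the concatenation $\mathbf{d} := h(j_1,i_1)a_{n+1}h(j,i)$ is already reduced. The question becomes whether appending a final $a_{n+1}$ is still reduced; if not, Lemma \ref{Bourbaki} produces a unique hat partner, which I need to locate.

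Next, I would eliminate the easy candidates. The middle $a_{n+1}$ cannot be the hat partner, for this would force $a_{n+1}h(j,i)a_{n+1}=h(j,i)$, i.e.\ $h(j,i)$ commutes with $a_{n+1}$; by Lemma \ref{Rofw} the commutant of $a_{n+1}$ in $W(A_n)$ is $P$, and by Lemma \ref{extremal1} the only element of the form $\lfloor r,n\rfloor\lceil i,1\rceil$ lying in $P$ is the identity, contradicting $h(j,i)\ne 1$. Similarly, if a simple reflection $\sigma_k$ strictly inside $h(j,i)$ were a hat partner, removing both it and the final $a_{n+1}$ would yield $h(j_1,i_1)a_{n+1}h'(j,i) = h(j_1,i_1)a_{n+1}h(j,i)a_{n+1}$, hence $h'(j,i)=h(j,i)a_{n+1}$, which is impossible since $h'(j,i)\in W(A_n)$ whereas the right-hand side is not. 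So any hat partner must lie inside $h(j_1,i_1)$.

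The analysis then splits according to the shape of $h(j,i)$. If $h(j,i)$ is extremal, I would use the commutation rules \eqref{productsof2Legobricks} together with Lemma \ref{wplemma} to rewrite the central block $a_{n+1}h(j,i)a_{n+1}$ and reduce the question to an instance of Lemma \ref{almostrigid}; combined with the rigidity statement of Lemma \ref{THErigidelement}, this rules out any simple reflection of $h(j_1,i_1)$ being a hat partner, so $\mathbf{d}a_{n+1}$ is reduced. If $h(j,i)$ is not extremal, then it has either the form $\lceil i,1\rceil$ with $i\ge 1$ or $\lfloor j,n\rfloor$ with $j\le n$. Sliding the inner $a_{n+1}$ past the $P$-commuting middle factors and applying the braid $a_{n+1}\sigma_1 a_{n+1}=\sigma_1 a_{n+1}\sigma_1$ (respectively its $\sigma_n$-counterpart) extracts a trailing $\sigma_1$ (respectively $\sigma_n$) from the central block; this trailing generator then either cancels against a letter of $h(j_1,i_1)$, exposing the hat partner, or it does not, leaving $\mathbf{d}a_{n+1}$ reduced.

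The main obstacle will be the bookkeeping in the non-extremal case: precisely identifying, in terms of the relative orderings of $i_1, i, j_1, j$, when the extracted trailing $\sigma_1$ or $\sigma_n$ reaches and cancels against the $\lceil i_1,1\rceil$ or $\lfloor j_1,n\rfloor$ half of $h(j_1,i_1)$. Each cancellation pattern must correspond exactly to one of the four configurations (1)--(4) and must pin down the hat partner indicated in the statement. Uniqueness of the hat partner is automatic from Lemma \ref{Bourbaki}, and the final assertion of the lemma follows because extremality of $h(j,i)$ requires $i\ge 1$ and $j\le n$, which excludes all four deficient configurations.
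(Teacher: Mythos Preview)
Your opening is correct and matches the paper: Lemma~\ref{Rofw} gives that $h(j_1,i_1)a_{n+1}h(j,i)$ is reduced, the middle $a_{n+1}$ cannot be the hat partner (commutant argument), and neither can a letter inside $h(j,i)$ (your affine-length argument is clean; the paper leaves this implicit). So the hat partner, if any, lies in $h(j_1,i_1)$.

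From there, however, the paper does \emph{not} split on the shape of $h(j,i)$. Instead it truncates $h(j_1,i_1)$ at the putative hat partner $s$, obtaining a suffix $h(j'_1,i'_1)$ whose leftmost letter is $s$, and rewrites the hat-partner condition as the conjugation identity
\[
V(j'_1,i'_1)\;:=\;a_{n+1}\,h(j'_1,i'_1)^{-1}\hat h(j'_1,i'_1)\,a_{n+1}\;=\;h(j,i)\,a_{n+1}\,h(j,i)^{-1}.
\]
A short case computation gives $V(j'_1,i'_1)$ explicitly: it is either the element $D$ of Lemma~\ref{almostrigid} (affine length $2$, hence impossible since the right-hand side has affine length $1$), or it is $y\,a_{n+1}\,y^{-1}$ with $y$ one of $\lceil i'_1,1\rceil$, $\lfloor j'_1,n\rfloor$, $\lfloor j'_1{+}1,n\rfloor$, $\lfloor 2,n\rfloor$. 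The uniqueness in Lemma~\ref{Rofw} (via Lemma~\ref{extremal1}: $y\,a_{n+1}\,y^{-1}=h(j,i)a_{n+1}h(j,i)^{-1}$ forces $h(j,i)^{-1}y\in P$, hence $h(j,i)=y$) then pins down the four deficient cases in one stroke.

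Your non-extremal branch (the braid extraction transforming $h(j_1,i_1)a_{n+1}\lceil i,1\rceil a_{n+1}$ into $h(j_1,i_1)\lceil i,1\rceil a_{n+1}\sigma_1$, and its $\sigma_n$ mirror) is sound and will produce cases (1)--(4) after the bookkeeping you anticipate. The gap is in your extremal branch. Lemma~\ref{wplemma} lets you modify the word before the final $a_{n+1}$ only by an element of $P$, but $h(j,i)$ has trivial $P$-part (Lemma~\ref{extremal1}), so it cannot be simplified that way; Lemma~\ref{almostrigid} concerns only the single element $D=a_{n+1}h(1,n-1)a_{n+1}$, not $a_{n+1}h(j,i)a_{n+1}$ for arbitrary extremal $h(j,i)$; and Lemma~\ref{THErigidelement} classifies rigid words ending in $\sigma_n a_{n+1}$, which is not the configuration here. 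To rule out hat partners when $h(j,i)$ is extremal you are effectively forced back to the conjugation identity above, where Lemma~\ref{almostrigid} enters exactly once (to exclude the $V=D$ cases) and the uniqueness in Lemma~\ref{Rofw} does the rest.
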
  
\begin{proof} 
From Lemma  \ref{Rofw}   we know that $h(j_1,i_1) a_{n+1} h(j,i) $ is reduced. Assume that  $h(j_1,i_1) a_{n+1} h(j,i)  a_{n+1} $ is not. The hat partner of the rightmost $a_{n+1}$ cannot be the leftmost $a_{n+1}$
because the commutant of   $a_{n+1}$ in $W(A_n)$ is $P$.  
So $h(j_1,i_1)$ is not equal to $1$ and the hat partner is a reflection $s$ in $h(j_1,i_1)$.  
Truncating the elements on the left of $s$ we obtain an equality 
$ h(j'_1,i'_1) a_{n+1}  h(j,i)  a_{n+1}= \hat h(j'_1,i'_1) a_{n+1}  h(j,i)   $ where  $ \hat h(j'_1,i'_1)$   is obtained from $ h(j'_1,i'_1)  $ by removing the leftmost reflection. We rewrite this as: 
$$    a_{n+1} h(j'_1,i'_1)^{-1}   \hat h(j'_1,i'_1) a_{n+1} =  h(j,i)  a_{n+1}  h(j,i)^{-1}  . 
$$
Let $V(j'_1,i'_1)$ be the expression on the left hand side. We compute: 
\begin{equation}\label{Vji}
V(j'_1,i'_1)= \left\{ \begin{aligned}  
& \lceil  i'_1,1 \rceil  a_{n+1}  \lfloor 1, i'_1  \rfloor   &\text{ if } j'_1=n+1 ; \cr 
& \lfloor j'_1, n  \rfloor a_{n+1}  \lceil  n,j'_1 \rceil   &\text { if }    1<j'_1 \le n \text{ and } i'_1<j'_1-1 ;   \cr 
&  D   &\text { if }    1<j'_1 \le n \text{ and } i'_1=j'_1-1 ;   \cr 
& \lfloor j'_1+1, n  \rfloor a_{n+1}  \lceil  n,j'_1+1 \rceil   &\text { if }    1<j'_1 \le n \text{ and } i'_1\ge j'_1 ;   \cr 
&  D     &\text { if }    j'_1=1 \text{ and } i'_1\ne 1 ;   \cr 
& \lfloor 2, n  \rfloor a_{n+1}  \lceil  n,2 \rceil  & \text { if }    j'_1=1  \text{ and } i'_1=1.   
\end{aligned}\right. 
\end{equation}
Our equality implies that  $ V(j'_1,i'_1)$ has affine length $1$, which excludes the cases where it is equal to $D$, by Lemma \ref{almostrigid}. The uniqueness in Lemma \ref{Rofw}   now implies that $h(j,i) $ is equal to one of the following:  $\lceil  i'_1,1 \rceil$, $ \lfloor j'_1, n  \rfloor$, $ \lfloor j'_1+1, n  \rfloor$ or $ \lfloor 2, n  \rfloor$, it remains to plug in  the conditions in (\ref{Vji}). 
\end{proof}

\begin{lemma}\label{SubcaseA1} Let $m \ge 2$,  assume the pairwise inequalities hold and    $ j_m >1 $. The element  $h(j_{m-1}, i_{m-1})  \lfloor j_m,n \rfloor $ is reduced and equal to one of the following reduced elements: 
$$
\begin{aligned} 
&  h(j_m, i_{m-1})  \lfloor j_{m-1}-1 ,n-1 \rfloor  &\text{ if }  j_{m-1} > j_{m} > i_{m-1} +1
\\ &  h(j_m-1, i_{m-1}-1)  \lfloor j_{m-1}-1 ,n-1 \rfloor  &\text{ if }  j_{m-1} > i_{m-1} +1 \ge  j_{m} >1  
\\ &  h(j_m-1, i_{m-1})  \lfloor j_{m-1} ,n-1 \rfloor  &\text{ if }  i_{m-1} +1 \ge  j_{m-1} \ge  j_{m} >1  
\end{aligned}
$$ 
Writing this as 
$ 
h(j_{m-1}, i_{m-1})  \lfloor j_m,n \rfloor  =  h(j'_{m-1}, i'_{m-1})  \lfloor u_{m},n-1 \rfloor 
$ 
with $u_{m} \ge 2$,  
 the sequence $\{(j_1,i_1), \cdots, (j_{m-2}, i_{m-2}), (j'_{m-1}, i'_{m-1}) \}$ 
satisfies  the pairwise inequalities. 
\end{lemma}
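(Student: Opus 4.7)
The plan is to unfold $h(j_{m-1}, i_{m-1}) = \lfloor j_{m-1}, n \rfloor \lceil i_{m-1}, 1 \rceil$ and rewrite the central pair $\lceil i_{m-1}, 1 \rceil \lfloor j_m, n \rfloor$ using the relations collected in (\ref{productsof2Legobricks}). The pairwise inequalities of Definition~\ref{pairwise}, notably clause~(4) applied to the transition from $(j_{m-1},i_{m-1})$ to $(j_m,i_m)$, ensure that the three sub-cases of the lemma are exhaustive under the hypothesis $j_m > 1$: if $j_m > i_{m-1}+1$ then automatically $j_{m-1}>i_{m-1}+1$, and clause~(4) forces $j_{m-1}>j_m$ (sub-case~1); if $j_m \le i_{m-1}+1$ then either $j_{m-1}> i_{m-1}+1$ (sub-case~2) or $j_{m-1}\le i_{m-1}+1$, in which case pairwise monotonicity yields $j_{m-1}\ge j_m$ (sub-case~3).

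In each sub-case I proceed by three scripted moves drawn from (\ref{productsof2Legobricks}). First, rewrite $\lceil i_{m-1},1\rceil \lfloor j_m, n\rfloor$: this is a commutation in sub-case~1, while in sub-cases~2 and~3 it becomes $\lfloor j_m-1,n\rfloor \lceil i_{m-1}-1,1\rceil$. Second, collapse the two adjacent $\lfloor\cdot,n\rfloor$ blocks via $\lfloor j_{m-1},n\rfloor \lfloor b,n\rfloor = \lfloor b,n\rfloor \lfloor j_{m-1}-1,n-1\rfloor$ with $b = j_m$ in sub-case~1 and $b = j_m-1$ in sub-cases~2 and~3; the required strict inequality $j_{m-1}>b$ is provided by each sub-case hypothesis. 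Third, re-pair the resulting middle factors: a pure commutation in sub-cases~1 and~2 (the support gap between $\lceil\cdot,1\rceil$ and $\lfloor\cdot,n-1\rfloor$ being available from the sub-case hypothesis) and, in sub-case~3, the braid $\lceil a,1\rceil \lfloor b,n-1\rfloor = \lfloor b-1,n-1\rfloor \lceil a-1,1\rceil$ applied in reverse, its input condition $1<j_{m-1}\le i_{m-1}+1$ being the sub-case~3 hypothesis. These rewrites land on the three claimed forms.

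Reducedness of every expression in the chain is then automatic, because each rule in (\ref{productsof2Legobricks}) is length-preserving: it suffices to exhibit one expression as reduced. I pick the output $h(j'_{m-1}, i'_{m-1})\lfloor u_m, n-1\rfloor$, whose total word length is the sum of the lengths of the factors $\lfloor j'_{m-1}, n\rfloor$, $\lceil i'_{m-1}, 1\rceil$ and $\lfloor u_m, n-1\rfloor$; comparing with the Stembridge canonical form (Theorem~\ref{1_2}) for the underlying element of $W(A_n)$ then delivers the claim.

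The pairwise-inequality check on the augmented sequence $\{(j_1,i_1),\ldots,(j_{m-2},i_{m-2}),(j'_{m-1},i'_{m-1})\}$ is the main obstacle, but it reduces to routine case analysis. The range conditions and the bound $u_m \ge 2$ are immediate: in sub-cases~1 and~2 the hypothesis $j_{m-1}>i_{m-1}+1\ge 1$ yields $j_{m-1}\ge 3$, and in sub-case~3 one has $u_m = j_{m-1}\ge j_m\ge 2$. Monotonicity follows from the original monotonicity together with $j'_{m-1}\le j_{m-1}$ and $i'_{m-1}\le i_{m-1}$. For clause~(5) applied to the new pair: in sub-cases~2 and~3 one checks that $h(j'_{m-1}, i'_{m-1})$ is long, namely $j'_{m-1}\le i'_{m-1}+1$, which makes clause~(5) vacuous; in sub-case~1 the new brick is short and clause~(5) reduces to the original clause~(5) applied to $(j_{m-2},i_{m-2}),(j_{m-1},i_{m-1})$, which is available since $(j_{m-1},i_{m-1})$ is itself short in that sub-case. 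For clause~(4): if $(j_{m-2},i_{m-2})$ is short, the original clause~(4) gives $j_{m-1}<j_{m-2}$, and since $j'_{m-1}\le j_{m-1}$ in all sub-cases, we conclude $j'_{m-1}<j_{m-2}$ as required.
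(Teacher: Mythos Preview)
Your approach is essentially the paper's own: the paper records the three identities for $\lfloor b,n\rfloor \lceil a,1\rceil \lfloor c,n\rfloor$ as (\ref{productsof3Legobricks}) and says they follow from (\ref{productsof2Legobricks}), whereas you derive them step by step from (\ref{productsof2Legobricks}); for reducedness the paper invokes Lemma~\ref{extremal1} (the output is $h(r,i)\cdot x$ with $x\in P$, hence reduced), which is slightly cleaner than your appeal to the Stembridge form but amounts to the same thing.

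There is, however, a genuine slip in your verification of clause~(3) for the augmented sequence. You write that monotonicity follows from ``$j'_{m-1}\le j_{m-1}$ and $i'_{m-1}\le i_{m-1}$''. The $j$-direction is fine, but for the $i$-direction you need $i'_{m-1}\ge i_{m-2}$, and knowing only $i'_{m-1}\le i_{m-1}$ together with $i_{m-2}\le i_{m-1}$ gives nothing. In sub-cases~1 and~3 there is no issue since $i'_{m-1}=i_{m-1}\ge i_{m-2}$, but in sub-case~2 you have $i'_{m-1}=i_{m-1}-1$ and must show $i_{m-1}>i_{m-2}$. This does hold: sub-case~2 has $j_{m-1}>i_{m-1}+1$, so the original clause~(5) applied to the pair $(j_{m-2},i_{m-2}),(j_{m-1},i_{m-1})$ gives exactly $i_{m-1}>i_{m-2}$. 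You already use this clause~(5) argument later for sub-case~1 of your clause~(5) check, so the repair is minor; but as written the monotonicity paragraph is wrong.
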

\begin{proof} We note the following formulas, for  $0 \le a  \le  n-1$, $1 \le b \le n+1$, $1 \le c \le n$: 
\begin{equation}\label{productsof3Legobricks} 
\begin{aligned}
     \lfloor b,n \rfloor    \lceil  a,1 \rceil   \lfloor c,n \rfloor  
&=    \lfloor c,n \rfloor     \lceil  a,1 \rceil    \lfloor b-1,n-1 \rfloor  
 &\text{ if }    c  >  a+1,  b >c   ;
 \\ 
&=      \lfloor b+1,n \rfloor   \lceil  a,1 \rceil  \lfloor b,n-1 \rfloor  
 &\text{ if }    c  >  a+1,  b =c   ; \\ 
&=      \lfloor c-1,n \rfloor   \lceil  a-1,1 \rceil   \lfloor b-1,n-1 \rfloor 
 &\text{ if }   1 <  c  \le  a+1  <b     ;\\
&=      \lfloor c-1,n \rfloor     \lceil  a,1 \rceil    \lfloor b,n-1 \rfloor  
 &\text{ if }   1 <  c  \le  b \le    a+1.    
\end{aligned}
\end{equation} 

They imply the equalities in the Lemma, 
 with $a= i_{m-1} \ge 0$,  $c=j_m>1$,   
$b= j_{m-1}\ge c >1$. The pairwise inequalities are easy to check. The expressions obtained are reduced by Lemma \ref{extremal1} and have the same length as the initial expression. 
\end{proof}

\subsection{The expression $w_m a_{n+1} $ is reduced.}\label{maincomputation}
The case $m=2$ has been dealt with in Lemma \ref{casem2reduced} so we let  $m\ge 3$. 
Furthermore the Rigidity Lemma \ref{rigidity}   gives the result  if $i_m=0$, or if $i_m=n-1$, 
or if $j_m=1$, hence we assume $j_m >1$ and $1 \le i_m < n-1$.

Suppose for a contradiction that  $w_m a_{n+1} $ is not reduced and let $s$ be the hat partner 
of the    $a_{n+1} $ on the right   (Lemma \ref{Bourbaki}). By induction hypothesis the expression 
$h(j_2, i_2) a_{n+1} \dots  h(j_m, i_m)  a_{n+1}$  is reduced so $s$  is to be removed from the leftmost part $h(j_1, i_1) a_{n+1} $.  
From  Lemma \ref{Bourbaki} we have  $  t_{w_m a_{n+1}}(a_{n+1}  ) =    t_{w_m a_{n+1}}(s)$,   
with  $t_{w_m a_{n+1}}(s)=t_{h(j_1, i_1) a_{n+1} }(s)$, so: 
$$
\!  [h(j_1, i_1)  a_{n+1} \dots  h(j_{m}, i_{m}) ]  \  a_{n+1} \    [  \dots    ]^{-1}  =t_{h(j_1, i_1) a_{n+1} }(s). 
$$

Recalling our assumptions $j_m >1$ and $1 \le i_m < n-1$,  we  compute 
$$  \begin{aligned}
 \!     X  &=   [h(j_{m-1}, i_{m-1})a_{n+1}h(j_m, i_m) ] \  a_{n+1}  \  [...]^{-1}     \\  
&=   [h(j_{m-1}, i_{m-1})a_{n+1}  \lfloor j_m,n \rfloor   \lceil  i_m,2 \rceil  ] \  \sigma_1 a_{n+1} \sigma_1 \   [...]^{-1}    \\
&= [h(j_{m-1}, i_{m-1})a_{n+1}  \lfloor j_m,n \rfloor  a_{n+1}   \lceil  i_m,2 \rceil  ] \   \sigma_1 \    [...]^{-1} 
\\
&=      [h(j_{m-1}, i_{m-1}) \lfloor j_m,n \rfloor  a_{n+1} \sigma_n  \lceil  i_m,2 \rceil  ] \   \sigma_1 \    [...]^{-1} 
\end{aligned}
$$

 We let $h(j_{m-1}, i_{m-1}) \lfloor j_m,n \rfloor =h(j'_{m-1}, i'_{m-1}) x$,   $x \in P$,  and   
 $$v= h(j_1, i_1)a_{n+1}   \dots  h(j_{m-2}, i_{m-2})  a_{n+1} h(j'_{m-1}, i'_{m-1}) a_{n+1}      $$
With Lemma \ref{SubcaseA1} we know that the expression $v$  satisfies the conditions in the key statement for $m-1$, so it is reduced and for any  reduced expression $y $ of an element in $W(A_n)$, $v y $ is reduced.   Let $y$ be a reduced form of  $x  \sigma_n  \lceil  i_m,2 \rceil $ ($\sigma_1$ is not in the support). The expression  $v y \sigma_1$ is reduced with leftmost terms $h(j_1, i_1)a_{n+1} $ ($m\ge 3$), so with  Lemma \ref{Bourbaki}
$v y \sigma_1  y^{-1}v^{-1}$ cannot be equal to  $t_{v y \sigma_1}(s)=t_{h(j_1, i_1) a_{n+1} }(s)$, a contradiction with 
$   w_m   a_{n+1}  w_m^{-1} = v y \sigma_1  y^{-1}v^{-1}$.   

 \subsection{The expression $w_m a_{n+1} \sigma_k $ is reduced for $2\le k \le n-1$.}\label{2ton-1}

We just proved that  $w_m a_{n+1}  $ is  reduced, so this follows from Lemmas \ref{extremal1} and \ref{wplemma}.

\subsection{The expression $w_m a_{n+1} \sigma_1 $ is reduced. }\label{casesigma1}

Let $m\ge 2$. We have shown  that    $w_m a_{n+1}  $ is a reduced  expression. 
Suppose for a contradiction that  $w_m a_{n+1} \sigma_1 $  is not and let $s$ be the hat partner of $\sigma_1$  (Lemma \ref{Bourbaki}). By induction hypothesis   $s$ belongs to the leftmost part of the expression: $h(j_1, i_1) a_{n+1} $.   We   have  
$$  t_{   w_m a_{n+1}  \sigma_1   }(\sigma_1  ) =    w_m   a_{n+1}\sigma_1  a_{n+1}   w_m^{-1} =    w_m   \sigma_1  a_{n+1}  \sigma_1 w_m^{-1}=
t_{   w_m   \sigma_1  a_{n+1} }( a_{n+1})$$
while $  t_{   w_m a_{n+1}  \sigma_1   }(s)= t_{   w_m   \sigma_1  a_{n+1} }( s) $ since the two expressions have the same 
 leftmost part $h(j_1, i_1) a_{n+1} $. 

 If  $i_m  = 0$ the expression $w_m   \sigma_1$ is obtained from $w_m$ by replacing $h(j_{m}, 0)$ with  $ h(j_{m}, 1)$. It 
satisfies   the conditions in the key statement, so $ w_m   \sigma_1 a_{n+1}$ is reduced and 
$  t_{   w_m   \sigma_1  a_{n+1} }(a_{n+1}  )  $ cannot be equal to $  t_{   w_m   \sigma_1  a_{n+1} }(s )$.

If $i_m\ge 1$,  we have the following reduced expression for $w_m \sigma_1$: 
 $$\mathbf y= h(j_1, i_1) a_{n+1}  \dots  h(j_{m-1}, i_{m-1})   a_{n+1}   \lfloor j_m,n \rfloor      \lceil i_m,2 \rceil . $$  
A contradiction will follow if we prove that $\mathbf y a_{n+1}$ is reduced or,  equivalently by  Lemma \ref{wplemma},  that 
$$ \mathbf z =  h(j_1, i_1) a_{n+1} \dots  h(j_{m-1}, i_{m-1})   a_{n+1}  \lfloor j_m,n \rfloor   a_{n+1}$$ 
is reduced. Lemma \ref{rigidity}  does the work if $j_m=1$. If $j_m>1$, we observe that 
$$
[ h(j_{m-1}, i_{m-1})a_{_{n+1}}  \lfloor j_m,n \rfloor a_{_{n+1}} ] \   a_{_{n+1}} \   [ \dots ]^{-1} \! \! =[  h(j_{m-1}, i_{m-1})  \lfloor j_m,n \rfloor ]  \   \sigma_n  \     [ \dots ]^{-1} . 
$$
  By Lemma \ref{SubcaseA1}, the expression $ h(j_{m-1}, i_{m-1})  \lfloor j_m,n \rfloor$ is reduced hence, by induction, so is $ \mathbf x = h(j_1, i_1) a_{n+1} \dots  h(j_{m-1}, i_{m-1})    \lfloor j_m,n \rfloor$.   
 If $m >2$, we obtain $t_\mathbf x (\sigma_n) = t_\mathbf x (s) $, a contradiction.  If $m=2$
we see that 
$ \mathbf z =  h(j_1, i_1) a_{_{n+1}}    \lfloor j_2,n \rfloor   a_{_{n+1}} $ is reduced using a braid, Lemma \ref{SubcaseA1} and
Lemma \ref{Rofw}.  

\subsection{The expression $w_m a_{n+1} \sigma_n$ is reduced. }

{The proof follows the same track as for $\sigma_1$, we omit it. }

\subsection{Affine length and uniqueness}\label{ALandunique}

We already know that an element of  affine length $k$ can be written as 
 $$  \   h(j'_1, i'_1) a_{n+1} h(j'_2, i'_2) a_{n+1} \dots  h(j'_k, i'_k)  a_{n+1} x  \  $$
where {$x \in W(A_n)$} and the  family  of integers  $j'_s$, $i'_s$, $1\le s \le k$,    satisfies the pairwise inequalities, and we just proved that 
for $k\le m$ this expression is reduced. 
Assume for a contradiction that either $w_m a_{n+1} $ has affine length less than $m$, or 
there is another expression of this element satisfying the required conditions. 
Either way,  we have  an integer $k\le m$ and a  family  of integers  $j'_s$, $i'_s$, $1\le s \le k$,    satisfying the pairwise inequalities, such that  
$$
\begin{aligned}
w  &= h(j_1, i_1) a_{n+1} h(j_2, i_2) a_{n+1} \dots  h(j_m, i_m)  a_{n+1}
\\  & =  
h(j'_1, i'_1) a_{n+1} h(j'_2, i'_2) a_{n+1} \dots  h(j'_k, i'_k)  a_{n+1} x 
\end{aligned}
$$
with {$x \in W(A_n)$} and both expressions reduced. We already proved that $ \  \mathscr{R} (w) =  \{ a_{n+1}  \} $,  hence $x=1$ and we can cancel out the term $a_{n+1}$ on the right. 
By induction the element expressed by $w_m=h(j_1, i_1) a_{n+1} h(j_2, i_2) a_{n+1} \dots  h(j_m, i_m) $ 
has affine length $m-1$ and can be uniquely written in this form, so $k=m$ and 
$(j'_s, i'_s)= (j_s, i_s)$ for any $s$, $1 \le s \le m$.

\section{First consequences}
\subsection{Left multiplication }
We need some insight into   left multiplication of affine blocks by a simple reflection.  
We produce a direct proof of our statement, actually homologous to  \cite[Theorem 2.6]{duCloux_transducer}, but it provides formulas that prove useful elsewhere.
 
\begin{theorem}\label{lefttimes}  
Let $ \mathbf{w_a}= \mathcal B(j_1, i_1) \mathcal B(j_2, i_2)   \dots  \mathcal B(j_m, i_m)    $ be an affine block  of affine length $m\ge 1$, let $w_a$ be the corresponding element in  $W(\tilde A_n)$, and let 
$s \in S_n$. Then:

\begin{enumerate}

\item   either $s w_a$ is not {a distinguished representative of 
$W(\tilde A_n)/W(A_n)$} and we have actually 
$l(s w_a)= l(  w_a)+1$  and  $s w_a= w_a\sigma_v$ for some $v$, $1\le v \le n$;   

\item or $s w_a$ {is a distinguished representative of 
$W(\tilde A_n)/W(A_n)$} and one of the following holds:

\begin{enumerate}

\item    $s=a_{n+1}$ and $h(j_1, i_1)=1$, so  $a_{n+1}w_a$  reduces to  the affine block 
$$  \mathcal B(j_2, i_2)   \dots  \mathcal B(j_m, i_m)    \qquad (1 \text{ if } m=1).$$  

\item   $s=a_{n+1}$ and $h(j_1, i_1)$ is extremal,  so $a_{n+1}w_a$ is the affine block 
$$ a_{n+1} \mathcal B(j_1, i_1) \mathcal B(j_2, i_2)   \dots  \mathcal B(j_m, i_m) .$$  

\item Otherwise, $s w_a$ is expressed as  an affine block of the following   form: 
$$ \mathcal B(j'_1, i'_1) \mathcal B(j'_2, i'_2)   \dots  \mathcal B(j'_m, i'_m) $$ 
where the $2m$-tuples  $(j_1, i_1, \cdots, j_m, i_m)$ and $(j'_1, i'_1, \cdots, j'_m, i'_m)$  
differ in one and only one entry, say   
 $j'_r\ne j_r$ or $i'_r\ne i_r$.   
If $ l(sw_a)= l(w_a) +1$ we have  $j'_r =j_r-1$ or $i'_r=i_r+1$, while if 
$ l(s w_a)= l(w_a) -1$ we have $j'_r =j_r+1$ or $i'_r=i_r-1$. 
\end{enumerate} 
\end{enumerate} 
\end{theorem}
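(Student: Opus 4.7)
My approach splits into three stages: (I) use Deodhar's Lemma~\ref{Soergel} to pin down the case structure and dispose of the easy situations, (II) prove an automaton-style lemma describing left multiplication of a single brick by a simple reflection, and (III) propagate the modification across the bricks of the affine block, controlling everything via the pairwise inequalities.

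\textit{Stage I.} Since $w_a$ is a distinguished representative of $W(\tilde A_n)/W(A_n)$ by Corollary~\ref{cos}, Lemma~\ref{Soergel} with $I=\{\sigma_1,\ldots,\sigma_n\}$ gives a clean trichotomy: either $l(sw_a)<l(w_a)$ (in which case $sw_a$ is still distinguished), or $l(sw_a)>l(w_a)$ and $sw_a$ is distinguished, or $l(sw_a)>l(w_a)$ and $sw_a=w_a\sigma_v$ for some $v$. The last alternative is exactly case~(1) of the theorem. The subcases (a) and (b) of~(2) for $s=a_{n+1}$ are immediate: if $h(j_1,i_1)=1$ then the leftmost brick is simply $a_{n+1}$ and cancels; if $h(j_1,i_1)$ is extremal, then appending on the left the trivial brick $\mathcal B(n+1,0)=a_{n+1}$ produces a new affine block whose tuple still satisfies the pairwise inequalities of Definition~\ref{pairwise}. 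This reduces the work to subcase~(c).

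\textit{Stage II (Automaton).} The key combinatorial ingredient is Lemma~\ref{automaton}, describing, for each $s\in S_n$ and each brick $\mathcal B(j,i)$, the normalization of $s\cdot\mathcal B(j,i)$. Using the movement rules~\eqref{productsof2Legobricks} for $\lfloor\cdot,\cdot\rfloor$ and $\lceil\cdot,\cdot\rceil$, together with $h(j,i)=\lfloor j,n\rfloor\lceil i,1\rceil$, one verifies by a finite case check that either
\[ s\cdot\mathcal B(j,i)=\mathcal B(j',i'), \quad s\cdot\mathcal B(j,i)=\mathcal B(j,i)\cdot\sigma_{v'}, \text{ or the case } s=a_{n+1}, \]
where $(j',i')$ differs from $(j,i)$ in exactly one coordinate, by $\pm 1$. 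The sign is dictated by whether $s$ adds or removes length: in particular $j'=j-1$ or $i'=i+1$ if the length increases, and $j'=j+1$ or $i'=i-1$ if it decreases. In the second (pass-through) form, $s$ effectively traverses the brick and emerges as some $\sigma_{v'}$ on the right.

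\textit{Stage III (Propagation).} We feed Stage II iteratively along the block from left to right. At the $k$-th step we have a residual left-multiplier $s_k$ acting on $\mathcal B(j_k,i_k)\cdots\mathcal B(j_m,i_m)$. Three outcomes are possible: the multiplier is absorbed into $\mathcal B(j_k,i_k)$ (modifying exactly one entry and terminating the procedure), it passes through unchanged producing a new $s_{k+1}\in\{\sigma_1,\ldots,\sigma_n\}$ to feed into the next brick, or the process exits the block altogether, which falls into case~(1) since the resulting element is then $w_a\sigma_v$. The pairwise inequalities are used at every transition: they guarantee that the ``pass-through'' output $s_{k+1}$ is compatible with $\mathcal B(j_{k+1},i_{k+1})$ in the sense of Stage II, and that the single-coordinate modification produced at the absorption step yields a $2m$-tuple still satisfying the pairwise inequalities (non-increasing $j_r$, non-decreasing $i_r$, and the boundary constraints at $j_r=i_r+1$). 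Uniqueness of the result follows from Theorem~\ref{AA}.

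\textit{Main obstacle.} The real work sits in Stage II and in the pairwise-inequality verification at the end of Stage III. Enumerating the cases in Lemma~\ref{automaton}, short versus long bricks, trivial or extremal $h(j,i)$, $s=\sigma_1$, $s=\sigma_n$, interior $\sigma_k$, and $s=a_{n+1}$, and checking that the output is always a single $\pm 1$ shift, is delicate but mechanical. The subtler point is ensuring that the propagation never breaks the pairwise inequalities, especially at the boundary $j_r=i_r+1$ where condition~(4)/(5) of Definition~\ref{pairwise} becomes active; handling this safely is where the geometry of the affine block really enters the argument.
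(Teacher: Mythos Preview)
Your approach matches the paper's: Deodhar's Lemma for the trichotomy, the automaton Lemma~\ref{automaton} for a single brick, and propagation across the block. The paper organises Stage~III as induction on $m$ applied to the prefix $w'_a=\mathcal B(j_1,i_1)\cdots\mathcal B(j_{m-1},i_{m-1})$ rather than left-to-right iteration, but that difference is cosmetic.

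There is, however, a genuine gap in your Stage~III. You assert that absorption at brick $r$ terminates the procedure with a tuple still satisfying the pairwise inequalities, to be checked directly. This is not always true. Take $n=5$, $w_a=\mathcal B(4,1)\mathcal B(2,1)$ and $s=\sigma_2$: the automaton gives $\sigma_2\,\mathcal B(4,1)=\mathcal B(4,2)$, but $\mathcal B(4,2)\mathcal B(2,1)$ violates condition~(3) of Definition~\ref{pairwise} since $i_2=1<2=i'_1$. The paper does not try to verify the inequalities directly here. Instead, when the modification at brick $m-1$ breaks the inequality with brick $m$, it applies the exchange formulas of Lemma~\ref{exchangeformulas} to rewrite the offending pair as $\mathcal B(j''_{m-1},i''_{m-1})\mathcal B(j''_m,i''_m)\,\sigma_t$ with $t\in\{1,n\}$; Deodhar's Lemma then forces $sw_a=w_a\sigma_t$, so we are in case~(1), not~(2c). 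Dually, when absorption \emph{decreases} length (so $j'_r=j_r+1$ or $i'_r=i_r-1$) and the inequality with brick $r-1$ is in doubt, the paper argues that $sw_a$ is distinguished by Deodhar, hence has a canonical form by Theorem~\ref{AA}; the expression just produced has the correct length and shape, so by uniqueness it \emph{is} that canonical form and the inequalities hold automatically. Your plan becomes complete once you build in this exchange-formula/Deodhar escape hatch; direct verification alone does not close the argument.
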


\begin{remark} {\rm 
In the case when $ l(s w_a)= l(w_a) -1$,  Theorem \ref{lefttimes} says that the ``hat partner'' of $s$ is a $\sigma_{j_r}$ or a $\sigma_{i_r}$ and that the resulting expression is in canonical form, i.e.  an affine block.}
\end{remark}

\begin{proof}  We establish first our statement in the case when $s=\sigma_u$ with  $1\le u \le n$. 
The case of affine length 1 is detailed in the following Lemma,   easily checked, in fact an automaton describing left multiplication of an affine brick $\mathcal B(j,i)$ by  
$\sigma_u $.  The result is either $\mathcal B(j,i)\sigma_v$ for some $v$, or an affine  brick of length $l(\mathcal B(j,i) \pm 1$.

\begin{lemma}\label{automaton} {
Let $\mathcal B(j,i) $ be an affine brick, with   $1 \le j \le n+1$ and $n-1 \ge i \ge 0$. The canonical form of 
$ \  \sigma_u \mathcal B(j,i) \  $ for   $1\le u \le n \ $ is as follows:
\begin{enumerate}[label=\alph*)]
\item $ \mathcal B(j,i)  \sigma_u \quad  $ if $u <j-1$ and $u>i+1$, 
\item   $ \mathcal B(j,i+1) \quad  $ if  $u <j-1$ and $u= i+1$, 
\item  $ \mathcal B(j,i-1) \quad $ if  $u <j-1$ and $u= i$, 
\item $ \mathcal B(j,i)  \sigma_{u+1} \quad $ if $u <j-1$ and $u<i$, 
\item  $ \mathcal B(j-1,i) \quad $ if  $u = j-1$, 
\item  $ \mathcal B(j+1,i) \quad $ if  $u = j$, 
\item $ \mathcal B(j,i)  \sigma_{u-1} \quad $ if $u >j$ and $u-1>i+1$, 
\item   $ \mathcal B(j,i+1) \quad $ if  $u >j$ and $u-1= i+1$, 
\item  $ \mathcal B(j,i-1) \quad $ if  $u >j$ and $u-1= i$, 
\item $ \mathcal B(j,i)  \sigma_u \quad $ if $u >j$ and $u-1<i$.  
\end{enumerate}
The  canonical form of 
$ \  a_{n+1} \mathcal B(j,i) \  $   is   as follows: 
\begin{enumerate}[label=\alph*)]\setcounter{enumi}{10}
\item $ \mathcal B(n+1,0)  \mathcal B(j,i)\quad  $ if $i>0$ and $j<n+1$,  or if $j=1$ and $i=0$, 
\item  $ \mathcal B(j,0)  \sigma_n \quad  $ if $i=0$ and $1<j<n+1$,
\item  $ \mathcal B(n+1,i)  \sigma_1 \quad  $ if $j=n+1$ and $i>0$,
\item $1 \quad $ if $j=n+1$ and $i=0$. 
\end{enumerate}}

{
In particular, if $j\le n$ and $i\ge 1$, the set   $\mathscr{L} (\mathcal B(j,i) )$ is the set 
$   \{   \sigma_j, \sigma_{i}\}  $  if $i < j-1 $, the set $   \{   \sigma_j, \sigma_{i+1}\}  $ otherwise.}
   \end{lemma}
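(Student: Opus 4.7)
\medskip
\noindent\textbf{Proof plan.} Unfold the affine brick as $\mathcal{B}(j,i)=\lfloor j,n\rfloor\,\lceil i,1\rceil\, a_{n+1}$ and proceed by a direct case analysis. The idea is simply to push the leftmost factor (either $\sigma_u$ or $a_{n+1}$) through the two segments $\lfloor j,n\rfloor$ and $\lceil i,1\rceil$ using commutation and braid relations, then through the final $a_{n+1}$ if needed, and finally to check that the resulting word is a canonical form (an affine brick, possibly multiplied by one simple reflection on the right belonging to the commutant $P$).

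\medskip
\noindent\emph{The $\sigma_u$ cases.} Split according to the position of $u$ relative to $j$. If $u<j-1$, then $\sigma_u$ commutes with every letter in $\lfloor j,n\rfloor$, so we reduce to pushing $\sigma_u$ through $\lceil i,1\rceil$: it commutes entirely past when $u>i+1$ (case (a)); it absorbs into $\lceil i,1\rceil$ to give $\lceil i+1,1\rceil$ when $u=i+1$ (case (b)); it cancels with the leftmost $\sigma_i$ when $u=i$ (case (c)); and when $u<i$ a single braid $\sigma_u\sigma_{u+1}\sigma_u=\sigma_{u+1}\sigma_u\sigma_{u+1}$ sends $\sigma_u$ to $\sigma_{u+1}$ on the right (case (d)). If $u=j-1$ we directly obtain $\lfloor j-1,n\rfloor$ (case (e)), and if $u=j$ the cancellation $\sigma_j\sigma_j=1$ gives $\lfloor j+1,n\rfloor$ (case (f)). Finally, if $u>j$ the $\sigma_u$ must be absorbed inside $\lfloor j,n\rfloor$: commuting past $\sigma_j,\dots,\sigma_{u-2}$ and using the braid $\sigma_u\sigma_{u-1}\sigma_u=\sigma_{u-1}\sigma_u\sigma_{u-1}$, the net effect is that $\sigma_u$ traverses $\lfloor j,n\rfloor$ and emerges on the right as $\sigma_{u-1}$; cases (g)--(j) are then the previous four subcases with $u$ replaced by $u-1$ against $\lceil i,1\rceil$. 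At the end the trailing reflection (when it exists) lies in $\{\sigma_2,\dots,\sigma_{n-1}\}$, so it commutes past $a_{n+1}$ and lands cleanly on the right of $\mathcal{B}(j,i)$.

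\medskip
\noindent\emph{The $a_{n+1}$ cases.} Here we use that $a_{n+1}$ commutes with every $\sigma_k$ for $2\le k\le n-1$ and the two braids $a_{n+1}\sigma_1 a_{n+1}=\sigma_1 a_{n+1}\sigma_1$, $a_{n+1}\sigma_n a_{n+1}=\sigma_n a_{n+1}\sigma_n$. When $h(j,i)$ is extremal, both $\sigma_1$ and $\sigma_n$ appear inside it, so $a_{n+1}$ cannot be absorbed and we simply obtain the two-brick affine block $\mathcal{B}(n+1,0)\,\mathcal{B}(j,i)$ (case (k)); this covers both $i>0, j<n+1$ and the degenerate $j=1,i=0$, where $\mathcal{B}(n+1,0)=a_{n+1}$. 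When $i=0$ and $1<j\le n$ there is no $\sigma_1$ in $h(j,i)=\lfloor j,n\rfloor$: $a_{n+1}$ commutes through $\sigma_j,\dots,\sigma_{n-1}$ and the braid at $\sigma_n$ turns $a_{n+1}\sigma_n a_{n+1}$ into $\sigma_n a_{n+1}\sigma_n$, producing $\mathcal{B}(j,0)\sigma_n$ (case (l)). Symmetrically, when $j=n+1$ and $i\ge 1$ the braid operates at the $\sigma_1$ end and yields $\mathcal{B}(n+1,i)\sigma_1$ (case (m)). Finally $j=n+1,i=0$ is the trivial cancellation $a_{n+1}a_{n+1}=1$ (case (n)).

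\medskip
\noindent\emph{Reducedness and descent set.} Every output is either a single affine brick, a single affine brick times one commuting simple reflection, or a length-one affine block followed by a second affine brick in canonical form; in each subcase a length count matches Theorem~\ref{AA} (or Theorem~\ref{1_2} for the trailing $\sigma_v$-factor), confirming that the expressions displayed are reduced. The characterization of $\mathscr{L}(\mathcal{B}(j,i))$ for $j\le n,\ i\ge 1$ follows by reading off which of the cases above strictly decrease the length: namely (e) contributes $\sigma_{j-1}$ shifting $j\mapsto j-1$, (f) fails since it increases length, and on the $\lceil i,1\rceil$-side only (c) and (i) decrease the length, contributing $\sigma_i$ when $i<j-1$ (via (c) with $u=i$) and $\sigma_{i+1}$ otherwise (via (i) with $u-1=i$, i.e.\ $u=i+1=j$-case requires $u>j$, which for $i\ge j-1$ forces the relevant generator to be $\sigma_{i+1}$). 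The main bookkeeping obstacle is just to keep the subcases mutually exclusive and exhaustive; all computations are elementary braid/commutation manipulations.
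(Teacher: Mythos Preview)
Your case analysis for (a)--(n) is correct and is exactly the direct verification the paper has in mind (the paper itself offers no proof beyond ``easily checked''). The manipulations with commutations and the two braids are the right ones, and the reducedness justifications via Lemma~\ref{Rofw} and Theorem~\ref{AA} are sound.

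However, your final paragraph on the left descent set contains a genuine slip: you have cases (e) and (f) reversed. Case (e) is $u=j-1$, giving $\mathcal B(j-1,i)$, whose length is $n+1+i+1-(j-1)$, i.e.\ one \emph{more} than $\ell(\mathcal B(j,i))$; so $\sigma_{j-1}\notin\mathscr L(\mathcal B(j,i))$. Case (f) is $u=j$, giving $\mathcal B(j+1,i)$, whose length is one \emph{less}; so $\sigma_j\in\mathscr L(\mathcal B(j,i))$, which is what the statement claims. The second element of $\mathscr L$ then comes from case (c) (with $u=i<j-1$) when $i<j-1$, and from case (i) (with $u=i+1>j$, so $u-1=i$) when $i\ge j$; when $i=j-1$ one has $\sigma_{i+1}=\sigma_j$ and the set collapses to a singleton. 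Your final sentence gestures at this but with the wrong case labels and a confused justification. Fix the (e)/(f) swap and rewrite that last argument cleanly, and the proof is complete.
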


We prove the general case by induction on $m$. Assuming the assumptions hold up to $m-1\ge 1$, we let 
$w'_a= \mathcal B(j_1, i_1) \mathcal B(j_2, i_2)   \dots  \mathcal B (j_{m-1}, i_{m-1})   $ and study  
$\sigma_u w_a =  (\sigma_u w'_a )  \mathcal B(j_m, i_m)   $ according to the shape of $\sigma_u w'_a $. 

\begin{itemize}
\item 
If $\sigma_u w'_a $ is not of minimal length in its coset, we write $\sigma_u w'_a= w'_a \sigma_v $ for some $v$, $1\le v \le m$, so that 
$$\sigma_u w_a =   w'_a  \sigma_v  \mathcal B(j_m, i_m)   .$$  
We deal with $  \sigma_v   \mathcal B(j_m, i_m)   $ using   the previous Lemma. If some $\sigma_z$ appears on the right we are in case (1). Assume now $  \sigma_v  \mathcal B(j_m, i_m)   =  \mathcal B(j'_m, i'_m) $. If  
  $j'_m=j_m-1$ or $i'_m=i_m+1$, we are in case (2c) since we get an affine block.  
 If  
  $j'_m=j_m+1$ or $i'_m=i_m-1$, it seems at first that the resulting expression  might not be canonical,  depending on the value of $j_{m-1}$ or $i_{m-1}$. But actually the expression has no other choice than being canonical. Indeed we are in a case where $ l(\sigma_u w_a)= l(w_a) -1$, hence 
$\sigma_u w_a$ has minimal length in its right coset and by Lemma     \ref{exchangeformulas}    the required inequalities are  satisfied.   

\item
If $\sigma_u w'_a $ is of minimal length in its coset, we write it as an affine block and get 
$$\sigma_u w_a =    \mathcal B(j'_1, i'_1) \mathcal B(j'_2, i'_2)   \dots  \mathcal B (j'_{m-1}, i'_{m-1})      \mathcal B(j_m, i_m)   .$$ 
This is an affine block except possibly when the only difference between the $i,j$'s and the $i', j'$'s happens for $j'_{m-1}$ or $ i'_{m-1}$ and the resulting pairs $ (j'_{m-1}, i'_{m-1}) $ and $(j_m, i_m)$ 
do not satisfy the required inequalities. 
In such a case we apply Lemma {exchangeformulas} and get 
$$\sigma_u w_a =   \mathcal B(j_1, i_1) \mathcal B(j_2, i_2)   \dots  \mathcal B(j''_{m-1}, i''_{m-1}) \mathcal B(j''_m, i''_m)   \sigma_t$$ 
with $t=1$ or $n$. Proposition \ref{Soergel} leaves only one choice, namely 
$\sigma_u w_a  = w_a \sigma_t$. This finishes the proof in the case $s=\sigma_u$. 
\end{itemize}

We take next $s=a_{n+1}$.   The cases when $h(j_1, i_1) $ is extremal or equal to $1$ are obvious. Otherwise we have $h(j_1, i_1) =   \lfloor j_1, n  \rfloor$ with $1< j_1\le n$ or 
$h(j_1, i_1) =   \lceil i_1,1 \rceil  $ with $i_1\ge 1$. Using a braid we reduce the claim to 
the one we have already proved for $s=\sigma_n$ or $s=\sigma_1$, left-multiplying  the affine block starting at $h(j_2, i_2)$. Checking that the resulting expression satisfies the pairwise inequalities  is 
straightforward and left to the reader. 
\end{proof}
 
\subsection{Right descent set}\label{Rds}
In this subsection we study the right descent set  $\mathscr{R} (w) $ of an element 
  $w$  in $W(\tilde A_{n})$ with $L(w) =m > 0 $,  given canonically as 
$$
w= \mathcal B(j_1, i_1) \mathcal B(j_2, i_2)   \dots  \mathcal B(j_m, i_m)   x, \qquad x \in W(A_n), 
$$
(hence the family $(j_s,i_s)_{1\le s \le m}$ satisfies the pairwise inequalities).  
 
The first observation is the following:  $  \    \mathscr{R} (x) \subseteq \mathscr{R} (w) \subseteq  \mathscr{R} (x) \cup  \{ a_{n+1}  \}  .$   
 Indeed  if a simple reflection $s$ other than $a_{n+1}$ does not belong to $ \mathscr{R} (x)$, then $ws$ is reduced by Theorem \ref{AA}. 

The determination of $\mathscr{R} (w) $ then amounts to 
giving the conditions for $ a_{n+1} $ to belong to this set. 
 Writing $x=h(j,i)p$, $p \in P$,   Lemma \ref{wplemma} shows that these conditions   depend only on the  $h(j,i)$ part of $x$, not on $p$.  Of course 
Theorem \ref{AA}  ensures that if $(j_m, i_m), (j,i)$ satisfy the pairwise  inequalities, then $ a_{n+1} $does not belong to $\mathscr{R} (w) $. 
  It is tempting to believe that if $x$ is extremal, then 
$w a_{n+1} $ is reduced.   This holds for $m=1$ (Lemma \ref{casem2reduced}) but it is not true in general, as we can see in the following Lemma that gives a full account of the case $m=2$.

\begin{lemma}\label{length2andx}
We consider an expression of the following form: 
$$
\mathcal B(j_1, i_1) \mathcal B(j_2, i_2)  x a_{n+1}  $$

\noindent 
where   $   x \in W(A_n)$ and $(j_1, i_1), (j_2,i_2)$ satisfy the pairwise inequalities,   
and we write $x=h(j,i)p$, $p \in P$.   If  $h(j,i)\ne 1$ this expression is reduced except:  

\begin{itemize}

\item  in the  four ``deficient''  cases listed in  Lemma \ref{casem2reduced}, with $j_1, i_1$ replaced by $j_2, i_2$, 

\item in  the  cases 
 listed below together with the hat partner of  the rightmost $a_{n+1}$:   
\begin{enumerate}
\item  $h(j,i)= \sigma_n \sigma_1 $ and $j_2>1$ and  $1\le i_2 < n-1$, 

the hat partner is the leftmost  $a_{n+1}$; 

\item $h(j,i)= h(n,i)$ and $1\le i \le i_2 < n-1$, $i < j_2$, \text{ and } $i_1 \ge i-1$,               

the hat partner is the $\sigma_{i-1}$ in 
$h(j_1,i_1) =    \lfloor j_1, n  \rfloor  \sigma_{i_1} \cdots \sigma_{i-1} \cdots \sigma_1  $; 

\item $h(j,i)= h(n,i)$ and $1 \le i \le i_2 < n-1$, $i \ge j_2$, \text{ and } $i_1 \ge i$,               

the hat partner is the $\sigma_{i}$ in 
$h(j_1,i_1) =    \lfloor j_1, n  \rfloor  \sigma_{i_1} \cdots \sigma_{i} \cdots \sigma_1  $.    
\end{enumerate}
\end{itemize}
We note that in  cases (1), (2), (3)  above, the element $x$ is extremal. 
\end{lemma}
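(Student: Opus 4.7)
The plan is to apply Lemma~\ref{wplemma} (iterated through a reduced word of $p$) to reduce to the case $p=1$, and then to apply Lemma~\ref{Bourbaki} to
\[
\mathbf w := \mathcal B(j_1,i_1)\mathcal B(j_2,i_2)h(j,i)a_{n+1}.
\]
After expanding $h(j,i)$ in the canonical form \eqref{Stembridge}, Theorem~\ref{AA} shows that the prefix $\mathcal B(j_1,i_1)\mathcal B(j_2,i_2)h(j,i)$ is reduced, so $\mathbf w$ is reduced iff the final $a_{n+1}$ has no hat partner. A hat partner can only sit in one of five places: inside $h(j_1,i_1)$, at the first $a_{n+1}$, inside $h(j_2,i_2)$, at the middle $a_{n+1}$, or inside $h(j,i)$.

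For a candidate $s$ in the right half of $\mathbf w$ (the last three locations), the reflection $t_{\mathbf w}(s)$ is the conjugate by $h(j_1,i_1)a_{n+1}$ of the corresponding reflection in the subword $\mathcal B(j_2,i_2)h(j,i)a_{n+1}=h(j_2,i_2)a_{n+1}h(j,i)a_{n+1}$, and the same holds for $t_{\mathbf w}(a_{n+1,\mathrm{last}})$. The hat-partner equation therefore descends to this shorter expression, and Lemma~\ref{casem2reduced} applied to it (with $(j_2,i_2)$ playing the role of $(j_1,i_1)$) yields exactly the four deficient cases of the present statement together with their hat partners.

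For a candidate $s$ in the left half I would truncate $h(j_1,i_1)$ on its left. Writing the subword of $\mathbf w$ strictly between $s$ and the final $a_{n+1}$ as $R\cdot a_{n+1}\,h(j_2,i_2)\,a_{n+1}\,h(j,i)$, where $R$ is the suffix of $h(j_1,i_1)$ to the right of $s$ (with $R=1$ and $s=a_{n+1}$ in the first-$a_{n+1}$ case), the Bourbaki condition $t_{\mathbf w}(s)=t_{\mathbf w}(a_{n+1,\mathrm{last}})$ rewrites as $RXR^{-1}=s$, with
\[
X \;=\; a_{n+1}\,h(j_2,i_2)\,a_{n+1}\,h(j,i)\,a_{n+1}\,h(j,i)^{-1}\,a_{n+1}\,h(j_2,i_2)^{-1}\,a_{n+1}.
\]
So $X$ must equal $R^{-1}sR$ for some suffix $R$ of $h(j_1,i_1)$ with $s$ the simple reflection immediately to its left (or $R=1$, $s=a_{n+1}$).

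The heart of the proof is the explicit simplification of $X$, using the commutations of $a_{n+1}$ with $\sigma_2,\dots,\sigma_{n-1}$ and the $\tilde A$-braid relations around $a_{n+1}$ together with the rules \eqref{productsof2Legobricks}. The decisive identity is $(a_{n+1}\sigma_n\sigma_1)^2=(\sigma_n\sigma_1\,a_{n+1})^2$, which yields $\sigma_n\sigma_1\cdot a_{n+1}\cdot\sigma_n\sigma_1=h(j_2,i_2)^{-1}\,a_{n+1}\,h(j_2,i_2)$ whenever $j_2>1$ and $1\le i_2<n-1$; it is the source of the collapses in $X$. A case analysis on $h(j,i)$ shows that $X$ is a simple reflection (or equals $a_{n+1}$) only when $h(j,i)\in\{\sigma_n\sigma_1\}\cup\{h(n,i):1\le i\le n-2\}$, and the pairwise inequalities isolate the exact subranges: for $h(j,i)=\sigma_n\sigma_1$ a direct manipulation using the identity above gives $X=a_{n+1}$ under $j_2>1$ and $1\le i_2<n-1$, so $R=1$ and $s=a_{n+1}$ solve $RXR^{-1}=s$ with no further constraint on $h(j_1,i_1)$ (case~(1), first $a_{n+1}$ as hat partner); for $h(j,i)=h(n,i)$ with $1\le i\le i_2<n-1$, one finds $X=\sigma_{i-1}$ if $i<j_2$ and $X=\sigma_i$ if $i\ge j_2$, and the existence of a suffix $R$ of $h(j_1,i_1)$ conjugating this $\sigma_t$ to the preceding simple reflection of $h(j_1,i_1)$ is equivalent to $\sigma_t$ actually occurring in $h(j_1,i_1)$, i.e. to $i_1\ge i-1$ in case~(2) and $i_1\ge i$ in case~(3), giving the stated hat partners. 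The main obstacle is this systematic simplification: individual braid moves are elementary, but the nested $a_{n+1}$'s must be commuted through long segments of $h(j_2,i_2)$ and $h(j,i)$ with careful bookkeeping of the pairwise inequalities. The final extremality remark is immediate, since $\sigma_n\sigma_1$ and every $h(n,i)$ with $i\ge 1$ contain both $\sigma_n$ and $\sigma_1$.
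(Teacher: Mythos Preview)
The paper explicitly skips the proof of this lemma (``We skip the (technical) proof of this Lemma''), so there is nothing to compare against directly. Your overall strategy is the natural one and is precisely in the spirit of the proof of Lemma~\ref{casem2reduced}: reduce to $p=1$ via Lemma~\ref{wplemma}, then apply the Bourbaki criterion and split the search for a hat partner of the last $a_{n+1}$ into the ``right half'' (handled by Lemma~\ref{casem2reduced} with $(j_2,i_2)$ in place of $(j_1,i_1)$) and the ``left half'' (handled by computing the fixed reflection $X$ and matching it against conjugates by suffixes of $h(j_1,i_1)$). Your verification of case~(1), namely $X=a_{n+1}$ when $h(j,i)=\sigma_n\sigma_1$, is correct.

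There is, however, a gap in your treatment of cases~(2) and~(3). You assert that $X$ simplifies to the \emph{simple} reflection $\sigma_{i-1}$ (resp.\ $\sigma_i$), and then claim that the solvability of $RXR^{-1}=s$ for a suffix $R$ of $h(j_1,i_1)$ with $s$ the preceding letter is ``equivalent to $\sigma_t$ actually occurring in $h(j_1,i_1)$''. Neither claim survives inspection. If the hat partner is the $\sigma_{i-1}$ sitting inside $\lceil i_1,1\rceil$, then the corresponding suffix is $R=\lceil i-2,1\rceil$ and one needs $X=R^{-1}\sigma_{i-1}R=\lfloor 1,i-1\rfloor\lceil i-2,1\rceil$, which is the (non-simple) transposition $(1,i)$ in $W(A_n)$, not $\sigma_{i-1}$. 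More generally, even when $X$ \emph{is} a simple reflection $\sigma_t$, the condition $R\sigma_tR^{-1}=s$ with $s$ the letter just left of $R$ is the exchange condition for $h(j_1,i_1)\cdot\sigma_t$, i.e.\ it asks whether $\sigma_t\in\mathscr R(h(j_1,i_1))$, which is a much more restrictive condition than ``$\sigma_t$ occurs in the support of $h(j_1,i_1)$''. So the shortcut you invoke does not hold, and the correct argument must compute $X$ as a genuine (affine) reflection and then carry out an honest exchange-condition analysis against the reduced word $h(j_1,i_1)a_{n+1}$, in the style of the $V(j_1',i_1')$ computation in the proof of Lemma~\ref{casem2reduced}. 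The case analysis is doable but more delicate than your sketch suggests; this is presumably why the authors call it ``technical'' and omit it.
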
  

We skip the (technical) proof of this Lemma. 
 Further computation shows that for $m=3$ the list of non reduced cases grows bigger, therefore we do not pursue this matter for now. 

Observing that actually, for $m\ge 2$:  
 $$    \mathscr{R} (x) \! \subseteq  \mathscr{R} (\mathcal B(j_{\scriptscriptstyle m}, i_{\scriptscriptstyle m})      x) 
\! \subseteq  \mathscr{R} (\mathcal B(j_{\scriptscriptstyle m-1}, i_{\scriptscriptstyle m-1}) \mathcal B(j_{\scriptscriptstyle m}, i_{\scriptscriptstyle m})   x)  \!  \subseteq \mathscr{R} (w)    
\!\subseteq  \mathscr{R} (x) \cup  \{ a_{\scriptscriptstyle  n+1}  \}  $$
we draw  from  Lemmas \ref{casem2reduced} and \ref{length2andx} a 
{\em list of cases in which $a_{n+1}$ does belong to $ \mathscr{R} (w)$,} together with its hat partner: 
\begin{enumerate}
\item 
\begin{enumerate}
\item  $h(j,i)=  \lceil  i,1 \rceil $ and $i_m \ge i \ge 1$, 

the hat partner is the $\sigma_i$ in 
$h(j_m,i_m) =  \lfloor j_m, n  \rfloor \sigma_{i_m} \cdots \sigma_i \cdots \sigma_1$; 
\item $h(j,i)=  \lfloor j, n  \rfloor$ and $1<j \le n $, $j_m\le j$, $i_m < j-1$, 

the hat partner is the $\sigma_{j}$ in 
$h(j_m,i_m) =   \sigma_{j_m} \cdots \sigma_{j} \cdots \sigma_n  \lceil  i_m,1 \rceil $; 
\item $h(j,i)=  \lfloor j, n  \rfloor$ and $2<j \le n $, $j_m<j$, $i_m \ge j-1$, 

the hat partner is the $\sigma_{j-1}$ in 
$h(j_m,i_m) =   \sigma_{j_m} \cdots \sigma_{j-1} \cdots \sigma_n  \lceil  i_m,1 \rceil $; 
\item $h(j,i)=  \lfloor 2, n  \rfloor$ and  $j_m=1$, $i_m =1$, 

 the hat partner is the leftmost  $\sigma_{1}$ in 
$h(j_m,i_m) =   \sigma_{1} \cdots  \sigma_n \sigma_1$.  
\end{enumerate}
 
\item 
\begin{enumerate}
\item  $h(j,i)= \sigma_n \sigma_1 $ and $j_m>1$ and  $1\le i_m < n-1$, 

the hat partner is the $a_{n+1}$ on the  left of $ h(j_m, i_m)$; 

\item $h(j,i)= h(n,i)$ and $1\le i \le i_m < n-1$, $i < j_m$, \text{ and } $i_{m-1} \ge i-1$,               

the hat partner is the $\sigma_{i-1}$ in 

$h(j_{m-1},i_{m-1}) =    \lfloor j_{m-1}, n  \rfloor  \sigma_{i_{m-1}} \cdots \sigma_{i-1} \cdots \sigma_1  $; 

\item $h(j,i)= h(n,i)$ and $1 \le i \le i_m < n-1$, $i \ge j_m$, \text{ and } $i_{m-1} \ge i$,               

the hat partner is the $\sigma_{i}$ in 

$h(j_{m-1},i_{m-1}) =    \lfloor j_{m-1}, n  \rfloor  \sigma_{i_{m-1}} \cdots \sigma_{i} \cdots \sigma_1  $.    
\end{enumerate}
\end{enumerate}

We point out again that this list is not exhaustive if $m\ge 3$.

\subsection{A tower of canonical reduced expressions }\label{Arr}

We study the affine length in the tower of injections $W(\tilde A_{n-1} ) \hookrightarrow W(\tilde A_{n})$
built with  the group monomorphism 
								\begin{eqnarray} 
				R_{n}: W(\tilde A_{n-1} ) &\longrightarrow& W(\tilde A_{n} )\nonumber\\
				\sigma_{i} &\longmapsto& \sigma_{i} \text{ for } 1\leq i\leq n-1\nonumber\\
				a_{n} &\longmapsto& \sigma_{n} a_{n+1}\sigma_{n}  \nonumber
			\end{eqnarray}
  from    \cite[Lemma 4.1]{al2016tower}. We produce below the canonical reduced expression of
$R_n(w)$ given the canonical reduced expression of $w \in   W(\tilde A_{n-1} )$ from Theorem \ref{AA}. In particular, 
$R_n(w)$ and $w$ have the same affine length and 
 the Coxeter length of $R_n(w)$ is fully determined by the Coxeter length and affine length of $w$. 

In this subsection we need to include the dependency on $n$ in the notation, so we write 
$ h_n(r,i)  =   \lfloor r,n \rfloor \lceil  i,1 \rceil $. 

\begin{theorem}\label{towerandcanonical}  Let  
$$
w= h_{n-1}(j_1, i_1) a_{n} h_{n-1}(j_2, i_2) a_{n} \dots  h_{n-1}(j_m, i_m)  a_{n} x 
$$
be the canonical reduced expression of an element  $w $  in $ W(\tilde A_{n-1} )$, where $x$ is the canonical reduced expression of an element in $W(A_{n-1})$. Substituting $ \sigma_{n} a_{n+1}\sigma_{n} $ for $a_n$ in this expression produces a reduced expression which can be transformed into the   canonical reduced expression of $R_n(w)$, that  has the following shape:

\begin{equation}\label{imageAnminusone}
R_n(w)= h_n(j_1, i_1) a_{n+1} h_n(j_2, i'_2) a_{n+1} \dots  h_n(j_m, i'_m)  a_{n+1}  \lfloor t, n  \rfloor x  
\end{equation}
where, letting 
$$   s=  \max \{k \   /  \    1 \le k \le m, \   i_k < n-k \} ,$$ 
we have:  
$$
i'_k=i_k  \text{ for } k \le s, \quad i'_k=i_k+1  \text{ for } k > s, \quad   t= n-s+1.  
$$
This implies $$L(R_n(w))= L(w), \qquad  l(R_n(w))= l(w)+ 2 L(w),$$
hence replacing $a_n$ by $ \sigma_{n} a_{n+1}\sigma_{n} $ in a  reduced expression for $w$ 
  produces a reduced expression for $R_n(w)$ if and only if the expression for $w$ is affine length reduced. 
\end{theorem}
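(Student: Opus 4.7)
The plan is to proceed by induction on the affine length $m$, starting from the substituted expression and simplifying via braid relations. The base cases $m=0$ (trivial) and $m=1$ reduce to the observation $h_{n-1}(j_1, i_1)\sigma_n = h_n(j_1, i_1)$, which holds because $\sigma_n$ commutes past $\lceil i_1, 1\rceil$ (as $i_1 \le n-2$), together with $\sigma_n = \lfloor n, n\rfloor = \lfloor t, n\rfloor$ when $s=1$.

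The heart of the argument is a local identity valid for $k \ge 2$, where $j_k \le n-1$. Using the braid $\sigma_n \sigma_{n-1}\sigma_n = \sigma_{n-1}\sigma_n\sigma_{n-1}$, one computes
\[
\sigma_n\,h_{n-1}(j_k, i_k)\,\sigma_n \;=\; \lfloor j_k, n \rfloor\,\sigma_{n-1}\,\lceil i_k, 1\rceil,
\]
which simplifies to $h_n(j_k, i_k)\,\sigma_{n-1}$ when $i_k < n-2$ (since $\sigma_{n-1}$ then commutes past $\lceil i_k, 1\rceil$) and to $h_n(j_k, i_k+1)$ when $i_k = n-2$ (since $\sigma_{n-1}\lceil n-2, 1\rceil = \lceil n-1, 1\rceil$). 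This dichotomy produces the two regimes $i'_k = i_k$ versus $i'_k = i_k + 1$ of the theorem.

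For the inductive step I would substitute $\sigma_n a_{n+1}\sigma_n$ for each $a_n$ and process the resulting expression left-to-right, carrying along a ``carry'' $\mathsf{C}_k \in P = \langle \sigma_2, \ldots, \sigma_{n-1}\rangle$ accumulated between the $h_n$-parts; since $\mathsf{C}_k$ commutes past $a_{n+1}$ freely, it only interacts with the next bracket. Setting $s_k = \max\{\ell \le k : i_\ell < n-\ell\}$, the claim to be proved inductively on $k$ is that $\mathsf{C}_k = \lfloor n-s_k+1,\, n-1\rfloor$ and that $(i'_1, \ldots, i'_k)$ matches the theorem up to index $k$. The transition from $\mathsf{C}_k$ to $\mathsf{C}_{k+1}$ merges $\mathsf{C}_k$ with the leading $\sigma_n$ of the next bracket and uses the iterated braid identities
\[
\sigma_{n-j}\,\lfloor r, n\rfloor = \lfloor r, n\rfloor\,\sigma_{n-j-1} \ \ (r \le n-j-1), \qquad \sigma_{n-j}\,\lceil i, 1\rceil = \lceil i, 1\rceil\,\sigma_{n-j} \ \ (i < n-j-1),
\]
to slide the carry past $\lfloor j_{k+1}, n\rfloor\lceil i_{k+1}, 1\rceil$, producing an $h_n(j_{k+1}, i'_{k+1})$ factor and either lengthening the carry by one reflection (when $i_{k+1} < n-(k+1)$) or leaving it unchanged while increasing $i'_{k+1}$ by one (when $i_{k+1} \ge n-(k+1)$). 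After processing all $m$ bricks, the trailing $\sigma_n$ absorbs the remaining carry $\lfloor n-s+1, n-1\rfloor$ into $\lfloor n-s+1, n\rfloor = \lfloor t, n\rfloor$.

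The main obstacle is to verify the hypotheses of these braid identities at each step, namely that $j_{k+1} \le n-s_k$, so that the leftmost reflection $\sigma_{n-s_k+1}$ of $\mathsf{C}_k$ slides past $\lfloor j_{k+1}, n\rfloor$ without an unexpected braid cancellation. This is a consequence of the pairwise inequalities of Definition \ref{pairwise}: a short subsidiary induction on $k$ shows that the strict-decrease rule (iii), triggered by $j_{\ell-1} > i_{\ell-1}+1$ whenever $i_{\ell-1}$ is below its threshold, together with the monotonicity of $(j_\ell)$, successively forces the required bound (and in particular rules out the edge case $j_{k+1}=n-1$ when $s_k \ge 2$). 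Once this is in place, the output is in canonical form and hence reduced by Theorem \ref{AA}, so counting generators yields $L(R_n(w)) = m = L(w)$ and $l(R_n(w)) = l(w) + 2\,L(w)$. The ``if and only if'' characterisation is then immediate, since substitution in any reduced expression for $w$ produces an expression of length $l(w) + 2\,\#\{\text{occurrences of }a_n\}$, which equals $l(R_n(w))$ precisely when $\#\{a_n\} = L(w)$.
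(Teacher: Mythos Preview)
Your proposal is correct and follows essentially the same approach as the paper's proof. Both arguments process the substituted expression left to right, accumulating a trailing element (your carry $\mathsf{C}_k=\lfloor n-s_k+1,n-1\rfloor$ is exactly the paper's $\lfloor t_{m-1},n\rfloor$ with the rightmost $\sigma_n$ stripped off), and both identify the crucial inequality $j_{k+1}\le n-s_k$ (equivalently $t_{m-1}>j_m$) and derive it from the pairwise inequalities in the same way.
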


Note that  we have  $s \le n-1$.  

\begin{proof} We observe first that the expression (\ref{imageAnminusone}) given for $R_n(w)$ is  canonical:  the pairwise inequalities are clearly satisfied, and  the fact that 
$ \lfloor t, n  \rfloor x$, $  x \in W(A_{n-1})$, is reduced, has been used since the beginning of this paper.  The last part of the Proposition states immediate consequences. We only have to produce form 
(\ref{imageAnminusone}).

Substituting $ \sigma_{n} a_{n+1}\sigma_{n} $ for $a_n$ in the canonical reduced expression of $w$  gives: 
$$
R_{\scriptscriptstyle n} (w)= h_{\scriptscriptstyle n-1} (j_{\scriptscriptstyle 1} , i_{\scriptscriptstyle 1} )  \sigma_{n} a_{\scriptscriptstyle n+1} \sigma_{n} h_{\scriptscriptstyle n-1} (j_{\scriptscriptstyle 2} , i_{\scriptscriptstyle 2} )  \sigma_{n} a_{\scriptscriptstyle n-1} \sigma_{n} \dots  h_{\scriptscriptstyle n-1} (j_{\scriptscriptstyle m} , i_{\scriptscriptstyle m} )   \sigma_{n} a_{\scriptscriptstyle n-1} \sigma_{n} x.  
$$
For the leftmost term, we have $ h_{n-1}(j_1, i_1)  \sigma_{n} =  h_{n}(j_1, i_1) $ since 
$i_1 \le n-2$. For the next one we have 
$$\sigma_{n} h_{n-1}(j_2, i_2)  \sigma_{n} =  \lfloor j_2, n-2  \rfloor \sigma_{n}\sigma_{n-1}\sigma_{n}  \lceil i_2,1 \rceil  =  \lfloor j_2, n  \rfloor \sigma_{n-1}   \lceil i_2,1 \rceil . $$
If $i_2=n-2$, we obtain $h_{n}(j_2, n-1) $, otherwise $ \sigma_{n-1}  $ travels to the right; so if $m=1$ or $m=2$ our claim holds. Assuming the claim holds up to $m-1  \ge 2$, we prove it for $m$. 
Let $  s= s_{m-1}=  \max \{k \   /  \    1 \le k \le m-1  \text{ and }  n-k  - i_k >0 \} $ and $ t_{m-1} = n-s_{m-1}+1$. We 
  have 
$$
R_{\scriptscriptstyle n} (w)= h_{\scriptscriptstyle n} (j_{\scriptscriptstyle 1} , i_{\scriptscriptstyle 1} ) a_{\scriptscriptstyle n+1}  \dots  h_{\scriptscriptstyle n} (j_{\scriptscriptstyle m-1} , i'_{\scriptscriptstyle m-1} )  a_{\scriptscriptstyle n+1}   \lfloor t_{\scriptscriptstyle m-1} , n  \rfloor  h_{\scriptscriptstyle n-1} (j_{\scriptscriptstyle m} , i_{\scriptscriptstyle m} )   \sigma_{\scriptscriptstyle n}  a_{\scriptscriptstyle n+1} \sigma_{\scriptscriptstyle n}  x.  
$$

We show first: $ t_{m-1} > j_m$. Indeed we have    $ t_{m-1} > i_s+1$ -- in particular 
$ t_{m-1} -1 > 1$, to be used soon. If $j_s \le i_s+1$ we are done, otherwise the sequence 
$(j_r)$ decreases strictly for $r \le s+1$ hence $j_{s+1} \le n-(s+1) +1 <  t_{m-1}$.

We can now  compute: 
$$
  \lfloor t_{m-1}, n  \rfloor  h_{n-1}(j_m, i_m)   \sigma_{n} 
=    \lfloor j_m, n  \rfloor   \lfloor t_{m-1}-1, n  -1  \rfloor    \lceil i_m,1 \rceil 
$$
equal to 
\begin{enumerate}
\item  $\lfloor j_m, n  \rfloor      \lceil i_m,1 \rceil \lfloor t_{m-1}-1, n  -1  \rfloor  \   $ 
if $\ t_{m-1}-1 > i_m+1$  ;  
\item    $\lfloor j_m, n  \rfloor    \lceil i_m +1,1 \rceil     \lfloor t_{m-1}, n  -1  \rfloor \  $
if $\  t_{m-1}-1 \le  i_m+1$. 
\end{enumerate}
Recalling $ t_{m-1} -1 > 1$, in these two cases $
R_n(w)$ is  respectively equal to:
\begin{enumerate}
\item  $ h_n(j_1, i_1) a_{n+1} \dots  h_n(j_{m-1}, i'_{m-1})  a_{n+1}    h_{n}(j_m, i_m)    a_{n+1}\lfloor t_{m-1}-1, n    \rfloor  x 
$;  
\item   $ h_n(j_1, i_1) a_{n+1} \dots  h_n(j_{m-1}, i'_{m-1})  a_{n+1}    h_{n}(j_m, i_m+1)    a_{n+1}\lfloor t_{m-1}, n    \rfloor  x 
$. 
\end{enumerate}
Both have the expected form, by induction, once we observe the following. If $  i'_{m-1}  = i_{m-1}+1$, then 
also $i'_m= i_m+1$:  certainly $  i'_{m-1}  = i_{m-1}+1$ implies $t_{m-1}= t_{m-2} \le i_{m-1}+2$.  Hence 
$t_{m-1} \le i_{m}+2$,  so finally $t_{m-1}=t_m$ and $i'_m= i_m+1$. 
 \end{proof}
 
 \begin{corollary}\label{con}
 
 Let $w  \in W(\tilde A_n)$ be given in its canonical form: 
 $$w= h(j_1, i_1) a_{n+1} h(j_2, i_2) a_{n+1} \dots  h(j_m, i_m)  a_{n+1}  x, \quad x \in W(A_n),  $$    
 then $w \in R_n(W(\tilde A_{n-1} ))$ if and only if  the following conditions hold:
 
\begin{enumerate}
 
  \item $j_1 \le  n$  and $i_1 < n-1$; 
   
   \item letting $s=  \max \{k \   /  \    1 \le k \le m, \   i_k < n-k \}$, we have:

$ i_{s+1}  > n-(s+1)  $;

   \item $x=\lfloor n-s+1, n \rfloor .y$ with   $y \in W(A_{n-1})$.
 \end{enumerate}

 \end{corollary}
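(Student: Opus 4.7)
The plan is to invoke Theorem \ref{towerandcanonical} in both directions. For the forward implication, suppose $w = R_n(w')$ for some $w' \in W(\tilde A_{n-1})$, and write the canonical form of $w'$ as $w' = h_{n-1}(J_1, I_1) a_n \cdots h_{n-1}(J_m, I_m) a_n x_0$, where $x_0 \in W(A_{n-1})$ and the pairwise inequalities in $W(\tilde A_{n-1})$ give $J_1 \le n$, $I_1 \le n-2$, and $J_k \le n-1$ for $k \ge 2$. Theorem \ref{towerandcanonical} applied to $w'$ yields the canonical form of $w$: $j_k = J_k$ for all $k$, $i_k = I_k$ for $k \le S$, $i_k = I_k+1$ for $k > S$, and tail $\lfloor n-S+1, n\rfloor x_0$, where $S = \max\{k : I_k < n-k\}$. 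The sets $\{k : I_k < n-k\}$ and $\{k : i_k < n-k\}$ coincide, so the corollary's $s$ equals $S$; conditions (1) and (3) are then immediate, and condition (2) follows from $i_{s+1} = I_{s+1}+1 \ge n-s$.

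For the converse, assume $w$'s canonical form satisfies conditions (1), (2), (3). Define tentative canonical form data for an element $w' \in W(\tilde A_{n-1})$ by $J_k = j_k$ for all $k$, $I_k = i_k$ for $k \le s$, $I_k = i_k - 1$ for $k > s$, and tail $y$. If these data satisfy the pairwise inequalities of Definition \ref{pairwise} in $W(\tilde A_{n-1})$, then $w'$ is a bona fide element and Theorem \ref{towerandcanonical} applied to $w'$ (noting that the theorem's index equals our $s$ and $t = n-s+1$) reproduces the canonical form of $w$, so $R_n(w') = w$ by uniqueness. The basic bounds are routine: $J_k \le n-1$ for $k \ge 2$ follows from condition (1) via pairwise inequality (4) for $w$ (when $j_1 = n$, $i_1 \le n-2$ yields $j_1 > i_1+1$, forcing $j_2 \le n-1$, and then monotonicity of $j$); $I_k \le n-2$ and $I_k \ge 0$ follow from condition (2) with monotonicity of $i$; the jump $I_{s+1} \ge I_s$ uses $i_{s+1} \ge n-s > i_s$.

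The delicate point, and what I expect to be the main obstacle, is pairwise inequality (4) for $w'$ at an index $k > s$ with $k-1 > s$: the hypothesis $J_{k-1} > I_{k-1}+1$ translates to the strictly weaker $j_{k-1} > i_{k-1}$, not directly accessible from pairwise (4) for $w$. The key lemma resolving this is that conditions (1), (2), (3) force $j_k \le i_k$ for every $k > s$. I would argue by contradiction: if $j_k > i_k$ for some $k > s$, then $j_k \ge i_k+1 \ge n-s+1$ (using $i_k \ge n-s$), and monotonicity of $j$ gives $j_r \ge n-s+1$ for all $r \le k$. Iterating pairwise inequality (4) for $w$ at indices $r = s+1, s, \ldots, 2$, at each step the hypothesis $j_{r-1} > i_{r-1}+1$ holds because $i_{r-1} \le i_s < n-s$ while $j_{r-1} \ge n-s+1$, yielding $j_{r-1} \ge j_r + 1$. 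A short induction gives $j_{s-t} \ge n-s+2+t$ for $t = 0, \ldots, s-1$, culminating in $j_1 \ge n+1$ and contradicting condition (1). Once $j_k \le i_k$ is established for $k > s$, the hypotheses of pairwise inequalities (4) and (5) for $w'$ at such indices fail and the remaining cases reduce directly to pairwise inequalities for $w$.
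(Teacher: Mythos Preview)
Your proposal is correct and follows exactly the route the paper indicates: the paper's own proof says only that ``the only thing to check is that, letting $\bar i_t=i_t$ if $t\le s$ and $\bar i_t=i_t-1$ if $t>s$, the family $(j_t, \bar i_t)_{1\le t \le m}$ satisfies the pairwise inequalities'' and leaves this to the reader. You have carried out precisely that verification, and your key lemma ($j_k \le i_k$ for every $k>s$, forced by condition~(1) via the cascading strict descent of the $j_r$'s) is the right way to dispose of pairwise inequalities (4) and (5) in the range $k>s$; the remaining checks are routine and you handle them correctly.
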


\begin{proof} The only thing to check is that, letting   $\bar i_t=i_t$ if $t\le s$ and $\bar i_t=i_t-1$ if $t>s$, the family  $(j_t, \bar i_t)_{1\le t \le m}$ satisfies the pairwise inequalities. This is left to the reader. 
\end{proof}
 
 The corollary tells that for a $w$ in $ W(\tilde A_n)$:  belonging to the image $R_n(W(\tilde A_{n-1} ))$ depends only on the $n$ leftmost affine bricks of the affine block $\mathbf{w_a}$  of $w$ and the finite part $x\in W(A_n)$! And that for every affine block $\mathbf{w_a}$ verifying  conditions (1) and (2) there are exactly $n!$ elements  $x\in  W(A_n)$ such that $\mathbf{w_a}.x$ is in  $R_n(W(\tilde A_{n-1} ))$. And finally that every element in $W(\tilde A_{n-1} )$ can be attained in such a way. \\

We can deduce from this the faithfulness of the tower of Hecke algebras on any ring, following the tracks of   \cite[Theorem 3.2]{al2019BandD}, with exactly the same proofs. In what follows, by algebra we mean
  $K$-algebra, where   $K$ is an arbitrary commutative ring with identity. We  fix  an invertible element $q$ in $K$.  
  There is a unique algebra structure on the free $K$-module with basis $ \{  g_w  |  w \in W(\tilde A_n) \} $ satisfying for $s \in S_n$: 
\begin{equation*}\label{definingrelations} 	
	 \begin{aligned}
		 &g_{s} g_{w} =g_{sw}     ~~~~~~~~~~~~~~~~~~~~  \text{ if } s \notin \mathscr{L} (w) , \\
		 &g_{s} g_{w} =qg_{sw}+ (q-1)g_w ~~  \text{ if } s \in \mathscr{L} (w). 
			  \end{aligned}    \qquad 
		\end{equation*}
This algebra is the Hecke algebra of type $\tilde A_n$, denoted   by    $H \tilde A_n(q)$. It  has a presentation  given by generators $ \left\{  g_s \ | \   s \in S_n \right\} $ and well-known relations.  The generators 
$ g_{s}$,  $s \in S_n$, are invertible.

 	The morphism $R_n$ defined in the beginning of this subsection has a counterpart in the setting of Hecke algebras,    namely the following morphism of algebras (where we write carefully $e_w$ for the basis elements of $H \tilde A_{n-1}(q)$, to be reminded of the possible lack of injectivity):  
	
		\begin{equation}\label{defRn}
\begin{aligned}
					  HR_n: H\tilde{A}_{n-1} (q)  &\longrightarrow   H\tilde{A}_{n} (q) \\
					e_{\sigma_i} &\longmapsto  g_{\sigma_i}  ~~~ ~~~ ~~~ \text{for }  1\leq i\leq n-1  \\
					e_{a_n} &\longmapsto  g_{\sigma_n} g_{a_{n+1}}g_{\sigma_n}^{-1}  .
\end{aligned}		
\end{equation}

 It was shown in \cite[Proposition 4.3.3]{al2013affine} that $HR_n$ is injective for $K=\mathbb Z[q, q^{-1}]$ where $q$ is an indeterminate. With a general $K$ as above, we can obtain injectivity using the following technical but crucial result, an immediate consequence of  Theorem \ref{towerandcanonical}  (see \cite[Proposition 3.1]{al2019BandD}):    

\begin{proposition}\label{coroBC}
	Let $w$ be any element in $W(\tilde{A}_{n-1 } )$, then there exist  $A_w \in  q^\mathds Z$ and  elements  $ \lambda_{x} \in K$  such that 

$$
					HR_n(e_{w}) =A_w \   g_{R_n(w)}+ \sum\limits_{\begin{smallmatrix} x\in W(\tilde{A}_{n}),    \cr  l(x)<l(R_n(w)) \cr L(x)\le L(w) \end{smallmatrix}} \lambda_{x}g_{x}, 
$$
\end{proposition}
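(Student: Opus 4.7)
The plan is to start from the canonical reduced expression of $w$ given by Theorem \ref{AA}, push it through $HR_n$, and extract a leading term using Theorem \ref{towerandcanonical}; the remaining summands will automatically satisfy the two required bounds.

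First I would fix the canonical reduced expression
\[
w=h_{n-1}(j_1,i_1)\,a_n\,h_{n-1}(j_2,i_2)\,a_n\,\cdots\,h_{n-1}(j_m,i_m)\,a_n\,x,\qquad x\in W(A_{n-1}),\quad m=L(w).
\]
Since this is reduced, $e_w$ factors as the corresponding product of Hecke generators. Applying $HR_n$ together with the rule $HR_n(e_{a_n})=g_{\sigma_n}g_{a_{n+1}}g_{\sigma_n}^{-1}$, and then expanding each inverse as $g_{\sigma_n}^{-1}=q^{-1}g_{\sigma_n}+(q^{-1}-1)$, presents $HR_n(e_w)$ as a sum of $2^m$ summands indexed by the choice of first or second term in each of the $m$ expansions.

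The summand in which every factor contributes the piece $q^{-1}g_{\sigma_n}$ equals $q^{-m}$ times the product of Hecke generators obtained by substituting $\sigma_n a_{n+1}\sigma_n$ for every occurrence of $a_n$ in the canonical expression of $w$. By Theorem \ref{towerandcanonical} this substituted word is a \emph{reduced} expression for $R_n(w)$, so the summand equals exactly $q^{-m}\,g_{R_n(w)}$, and setting $A_w:=q^{-m}\in q^{\mathds Z}$ accounts for the leading term. I expect this identification to be the main obstacle of the argument: it is only through the non-trivial reducedness statement of Theorem \ref{towerandcanonical} that a clean power of $q$ falls out, rather than some complicated element of $K$.

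Each of the other $2^m-1$ summands is a monomial in $q^{\mathds Z}$ times a product of Hecke generators in which at least one $g_{\sigma_n}$ has been dropped; the underlying string of generators therefore has length strictly less than $l(w)+2m=l(R_n(w))$. Expanding any such product in the standard basis via the relations $g_sg_y\in\{g_{sy},\,qg_{sy}+(q-1)g_y\}$ yields only basis vectors $g_y$ such that $y$ admits a reduced expression that is a subword of the original string; in particular $l(y)<l(R_n(w))$. Moreover the string still carries exactly $m$ copies of $g_{a_{n+1}}$, so any such $y$ has an expression with at most $m$ occurrences of $a_{n+1}$, whence $L(y)\le m=L(w)$ by the very definition of affine length. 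Collecting these contributions as $\sum\lambda_x g_x$ completes the proof.
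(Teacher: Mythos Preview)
Your argument is correct and is precisely the derivation the paper has in mind: it states that the proposition is ``an immediate consequence of Theorem \ref{towerandcanonical}'' and refers to \cite[Proposition 3.1]{al2019BandD} for the identical computation in type $\tilde B$. Your expansion of each $g_{\sigma_n}^{-1}$, the identification of the top summand via the reducedness clause of Theorem \ref{towerandcanonical}, and the length and affine-length bounds on the remaining $2^m-1$ summands reproduce that argument exactly.
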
				  
 
With this, the proof of \cite[Theorem 3.2]{al2019BandD} applies, we obtain:

\begin{corollary}\label{HeckeA} Let $K$ be a ring and $q$ be invertible in $K$. 
The tower of affine Hecke  algebras: 

$$  H\tilde{A}_{1}(q)  \stackrel{HR_{2}}{\longrightarrow}  H\tilde{A}_{2}(q) \stackrel{HR_{3}} {\longrightarrow}  \cdots  H\tilde{A}_{n-1} (q)\stackrel{HR_{n}} {\longrightarrow}  H\tilde{A}_{n}(q)\longrightarrow  \cdots $$

\medskip\noindent
is a  tower of faithful arrows. 	

\end{corollary}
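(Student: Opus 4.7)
The plan is to deduce the injectivity of $HR_n$ from the triangular expansion provided by Proposition~\ref{coroBC}, combined with the fact (Theorem~\ref{towerandcanonical}) that $R_n$ preserves affine length. Suppose for contradiction that some nonzero $\sum_{w}\mu_{w}\,e_{w} \in H\tilde{A}_{n-1}(q)$ lies in $\ker HR_n$, with the sum ranging over finitely many $w \in W(\tilde{A}_{n-1})$ for which $\mu_{w}\neq 0$. Since $\{g_{x}\}_{x\in W(\tilde A_n)}$ is a $K$-basis of $H\tilde{A}_{n}(q)$, it suffices to exhibit some $x \in W(\tilde A_n)$ whose coefficient in $\sum_{w}\mu_{w}\,HR_n(e_{w})$ is forced to be nonzero.

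The key idea is a double-maximality argument. Let $M = \max\{L(w) : \mu_{w}\neq 0\}$ be the largest affine length occurring, and among the $w$'s realising this maximum pick one, $w_{0}$, of maximal Coxeter length. The ``leading target'' will be $x_{0} := R_n(w_{0})$; by Theorem~\ref{towerandcanonical} we have $L(x_{0})=M$ and $l(x_{0})=l(w_{0})+2M$. I would then read off the coefficient of $g_{x_{0}}$ in $\sum_{w}\mu_{w}\,HR_n(e_{w})$ using Proposition~\ref{coroBC}. The term $HR_n(e_{w_{0}})$ contributes $\mu_{w_{0}}A_{w_{0}}$. A stray contribution from some other $w\neq w_{0}$ would require $g_{x_{0}}$ to appear in the sum $\sum_{x}\lambda_{x}g_{x}$ from Proposition~\ref{coroBC} applied to $e_{w}$; this forces $L(x_{0}) \le L(w)$ and $l(x_{0}) < l(R_n(w))$, i.e.\ $M \le L(w)$ and $l(w_{0}) < l(w)$ (again using that $R_n$ preserves affine length). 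By maximality of $M$ we must have $L(w)=M$, and then maximality of $l(w_{0})$ forces $\mu_{w}=0$. The ``pure leading term'' $g_{R_n(w_{0})}$ therefore cannot be produced by any $w\neq w_{0}$.

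Consequently the coefficient of $g_{x_{0}}$ in $\sum_{w}\mu_{w}\,HR_n(e_{w})$ equals $\mu_{w_{0}}A_{w_{0}}$, and since $A_{w_{0}}\in q^{\mathbb{Z}}$ is a unit in $K$ (as $q$ is invertible), this forces $\mu_{w_{0}}=0$, contradicting the choice of $w_{0}$. Hence $HR_n$ is injective for every $n$ and every ring $K$, and the tower is faithful as claimed. The whole chain is faithful because the composition of injections is injective.

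The main subtlety I expect is bookkeeping the fact that $R_n$ is one-to-one on the indexing set, so that different $w$'s give different leading basis elements $g_{R_n(w)}$: this is guaranteed by the uniqueness of the canonical form in Theorem~\ref{AA} together with the explicit description of $R_n$ in Theorem~\ref{towerandcanonical}. Everything else is formal triangular elimination, and crucially the proof makes no use of any special feature of the ring $K$ beyond the invertibility of $q$, which is precisely why the statement is stronger than the generic-$q$ injectivity result of \cite{al2013affine}.
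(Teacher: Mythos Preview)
Your argument is correct and is precisely the approach the paper intends: the paper does not spell out the proof but says that ``the proof of \cite[Theorem 3.2]{al2019BandD} applies'' once Proposition~\ref{coroBC} is available, and that proof is exactly the triangular-elimination argument you wrote (maximize first affine length, then Coxeter length, and use that $A_{w}\in q^{\mathbb Z}$ is a unit). Your handling of the only delicate point---that for $w\ne w_0$ the leading term $g_{R_n(w)}$ cannot equal $g_{x_0}$ because $R_n$ is a group monomorphism---is also correct, though it would be cleaner to state it before, rather than after, the analysis of the lower-order terms.
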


\section{Canonical form in type $\tilde B$}  
In this section we produce a canonical reduced expression, or {\it canonical form},  for elements of the Coxeter group $W(\tilde{B}_{n+1} ) $, as a right lex-min form from   section \ref{basics}.  We mostly omit the proofs, which are easier than the previous ones.

  \subsection{Canonical form in $W(D_{n+1})$}
  
For $ n \ge 3$ consider the $D$-type Coxeter group with $n+1$ generators $W(D_{n+1})$, of cardinality $2^{n} (n+1)!$, generated by $\underline S= \{ \sigma_{1},  \sigma_{\bar 1}, \dots, \sigma_{n}  \}$, with the following Coxeter diagram: 
\pagebreak 
			
			\begin{figure}[ht]
				\centering
				\begin{tikzpicture}

 \filldraw (-1.5,0) circle (2pt);
  \node at (-1.5,-0.5) {$\sigma_1$}; 

  \draw (-1.5,0) -- (0, 0);

\filldraw (0,1.5) circle (2pt);
 \node at (0,2) {$ \sigma_{\bar 1}$}; 

  \draw (0,1.5) -- (0, 0);
    
  \filldraw (0,0) circle (2pt);
  \node at (0,-0.5) {$\sigma_{2}$}; 
   
  \draw (0,0) -- (1.5, 0);

  \filldraw (1.5,0) circle (2pt);
  \node at (1.5,-0.5) {$\sigma_{3}$};

  \draw (1.5,0) -- (3, 0);

  \node at (3.5,0) {$\dots$};

  \draw (4,0) -- (5.5, 0);
  
  \filldraw (5.5,0) circle (2pt);
  \node at (5.5,-0.5) {$\sigma_{n}$};

  

               \end{tikzpicture}
			\end{figure}

 $W(D_{3})$ is to be  $W(A_{3})$ conventionally.  
The set  $W(D_{n+1})$     is described 
  by Stembridge in \cite[beginning of Part II]{St}. We use the notation there and the same convention that the subword  $ \sigma_{\bar 1} \sigma_1$ does not appear (we see it as $\sigma_1 \sigma_{\bar 1}$  for the sake of unicity, hence canonicity). For integers $j  \ge i \ge 2$ and $k\ge 1$ let:  		
$$ \begin{aligned}  \      \langle i,j ]    
&= \sigma_i \sigma_{i+1} \dots \sigma_j   \  ;          \    \langle -i, j ]   =  \sigma_i \sigma_{i-1} \dots  \sigma_2  \sigma_1\sigma_{\bar 1}   \sigma_2\dots \sigma_{j-1}  \sigma_j, 
\\
 \langle 1,k]    
&= \sigma_1 \sigma_{2} \dots \sigma_k  \   ; \    \langle -1, k]     = \sigma_{\bar 1}  \sigma_{2} \dots \sigma_k\      \    ;  \    \langle 0, k]   =\sigma_1  \sigma_{\bar 1}  \sigma_{2} \dots \sigma_k  ; 
\end{aligned}  
$$ 
so that $ \langle -1,1]  =  \sigma_{\bar 1}$ and  $ \langle 0,1]  =\sigma_1   \sigma_{\bar 1}$.  We also let  for convenience  $  \langle n+1,n ]=1$, and we write down the easy rule: 
\begin{equation}\label{easyrule}
\sigma_2 \sigma_1   \sigma_{\bar 1} \sigma_2 \sigma_1   \sigma_{\bar 1} 
=  \sigma_1 \sigma_2   \sigma_{\bar 1} \sigma_2 \sigma_1   \sigma_{2} . 
\end{equation}
Then, considering the shortest left coset representatives of $W(D_{n+1})/W(D_{n})$
leads to a canonical reduced expression for 
every element of $W(D_{n+1})$ 
   ({\it loc.cit.}):

\begin{theorem}\label{NFDn}
    $W(D_{n+1})$ is the set of elements with a reduced expression of the   form 
 \begin{equation}\label{StembridgeD}
 \langle m_1,n_1]  \langle m_2,n_2] \dots \langle m_r,n_r]   
  \end{equation}
  with 
$n\ge n_1 > n_2 > \dots n_r \ge 1$ and $|m_i| \le n_i$ for $1\le i \le r$. 
Identity is to be considered the case where $r=0$. 
\end{theorem}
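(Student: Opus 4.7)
The plan is to mimic the argument for type $A$ (Theorem \ref{1_2}) by building the canonical form inductively from the descending chain of parabolic subgroups
$$W(D_{n+1}) \supset W(D_n) \supset \cdots \supset W(D_3) = W(A_3) \supset W(D_2) \supset \{1\},$$
where $W(D_k) = \langle \sigma_1, \sigma_{\bar 1}, \sigma_2, \dots, \sigma_{k-1} \rangle$, and then applying the canonical factorization \eqref{StemFactorization} of section \ref{FdC}. Concretely, I would order the generators as $\sigma_1 < \sigma_{\bar 1} < \sigma_2 < \sigma_3 < \cdots < \sigma_n$, so that removing the largest generator at each step produces this chain, and then show that the right lex-min form for this ordering is exactly the product \eqref{StembridgeD}.

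The central step is to establish, for each $k$ with $2 \le k \le n+1$, that the set of distinguished representatives of $W(D_k)/W(D_{k-1})$ is
$$W^k \;=\; \{1\} \cup \{\langle m, k-1] \,:\, -(k-1) \le m \le k-1\}.$$
First I would verify that each expression $\langle m, k-1]$ is reduced by a direct inspection (counting the factors and using that $\sigma_1, \sigma_{\bar 1}$ commute), and compute its length: $k-m$ for $2 \le m \le k-1$, the value $k-1$ for $m=\pm 1$, $k$ for $m=0$, and $k-1+|m|$ for $-(k-1) \le m \le -2$. Each such expression ends with $\sigma_{k-1}$ on the right, hence by Lemma \ref{minlength} is of minimal length in its right coset of $W(D_{k-1})$, so it is a distinguished representative. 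Next I would show the $2(k-1)+1$ nonidentity elements are pairwise distinct: elements of distinct length are automatic, and the only coincidence of length occurs between $\langle 1, k-1]$ and $\langle -1, k-1]$, which are distinguished by their leftmost generator ($\sigma_1$ vs $\sigma_{\bar 1}$). A counting argument then closes the step: there are $2(k-1)+2 = 2k$ candidate representatives, matching $[W(D_k):W(D_{k-1})] = 2^{k-1}k! / 2^{k-2}(k-1)! = 2k$, so no coset is missed.

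Once this is done, the canonical factorization \eqref{StemFactorization} gives, for every $w \in W(D_{n+1})$, a unique expression $w = w_{n+1} w_n \cdots w_2$ with $w_k \in W^k$ and $\ell(w) = \sum \ell(w_k)$; deleting the trivial factors and renaming yields exactly the form \eqref{StembridgeD}, with $n_1 > \cdots > n_r$ and $|m_s| \le n_s$. Reducedness is automatic from the length additivity, and uniqueness follows from the uniqueness of distinguished coset representatives at each step. The main obstacle is the verification that the $2k-1$ elements $\langle m, k-1]$ are pairwise non-congruent modulo $W(D_{k-1})$ on the left (equivalently, that they all lie in distinct cosets); to sidestep any case-by-case signed-permutation calculation, I would instead use the identity \eqref{easyrule} together with the exchange property to show that multiplying any two of these candidates on the left by an element of $W(D_{k-1})$ either keeps the right-most letter as $\sigma_{k-1}$ preceded by an unchanged prefix, or collapses the length strictly, ruling out any coincidence between cosets.
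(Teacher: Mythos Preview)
Your approach is exactly the one the paper points to: the paper does not give its own proof but cites Stembridge \cite{St}, noting only that ``considering the shortest \ldots\ coset representatives of $W(D_{n+1})/W(D_n)$ leads to a canonical reduced expression''. Your inductive use of the chain $W(D_{n+1}) \supset W(D_n)\supset\cdots$ together with \eqref{StemFactorization} and the index count $[W(D_k):W(D_{k-1})]=2k$ is precisely that argument.

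Two small points. First, your appeal to Lemma~\ref{minlength} is imprecise: that lemma requires the \emph{right lex-min} form of $x$ to end in $\sigma_{k-1}$, not just some reduced expression. You should either observe that each $\langle m,k-1]$ is essentially rigid (unique reduced expression up to the commutation $\sigma_1\sigma_{\bar 1}=\sigma_{\bar 1}\sigma_1$), so it \emph{is} its own right lex-min form, or verify directly that $\mathscr R(\langle m,k-1])=\{\sigma_{k-1}\}$, which is straightforward. Second, the ``main obstacle'' you flag at the end is not one: once you know the $\langle m,k-1]$ are distinguished representatives and are pairwise distinct as \emph{elements}, they automatically lie in distinct right cosets, since each coset has a unique distinguished representative (Theorem~\ref{para}). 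The counting then finishes the proof with no further case analysis.
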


\subsection{$W(\tilde{B})$ as an "affinisation" of type  $D$}\label{affB}
	
	Now let $W(\tilde{B}_{n+1}) $ be the affine Coxeter group of $\tilde{B}$-type with $n+2$ generators in which $W(D_{n+1})$ is naturally a parabolic subgroup, as seen in  the following Coxeter diagram: 
			
				\begin{figure}[ht]
				\centering
				\begin{tikzpicture}

 \filldraw (-1.5,0) circle (2pt);
  \node at (-1.5,-0.5) {$\sigma_1$}; 

  \draw (-1.5,0) -- (0, 0);

\filldraw (0,1.5) circle (2pt);
 \node at (0,2) {$ \sigma_{\bar 1}$}; 

  \draw (0,1.5) -- (0, 0);
  
  \filldraw (0,0) circle (2pt);
  \node at (0,-0.5) {$\sigma_{2}$}; 
   
  \draw (0,0) -- (1.5, 0);

  \filldraw (1.5,0) circle (2pt);
  \node at (1.5,-0.5) {$\sigma_{3}$};

  \draw (1.5,0) -- (3, 0);

  \node at (3.5,0) {$\dots$};

  \draw (4,0) -- (5.5, 0);
  
  \filldraw (5.5,0) circle (2pt);
  \node at (5.5,-0.5) {$\sigma_{n}$};
 
  \draw (5.5,-0.07) -- (7, -0.07);
\draw (5.5,0.07) -- (7, 0.07);
  
  \filldraw (7,0) circle (2pt);
  \node at (7,-0.5) {$t_{n+1}$};

               \end{tikzpicture}
			\end{figure}
In other words the group  $W(\tilde{B}_{n+1} ) $ has a presentation given by the set of generators $S=\{
  \sigma_{\bar 1},\sigma_{1},  \dots, \sigma_{n},t_{n+1} \}$ and the relations: 
$$ \begin{aligned}
&  t_{n+1}^2 = 1,   {\sigma_{\bar 1}}^2=1  \text{ and } \sigma_i^2 = 1  \text{ for } 1\le i \le n ; \\ 
&\sigma_i \sigma_{j} = \sigma_{j} \sigma_i \text{ for } 1\le i, j \le n, \ |i-j|\ge  2 ; \\
&\sigma_i t_{n+1} = t_{n+1}\sigma_i \text{ for } 1\le i < n ;  \quad 
 \sigma_{\bar 1}  t_{n+1} = t_{n+1} \sigma_{\bar 1}    ; \\ 
&\sigma_i  \sigma_{\bar 1} = \sigma_{\bar 1}\sigma_i \text{ for } i=1 \text{ or } 3\le i   ; \\ 
&\sigma_i \sigma_{i+1} \sigma_i = \sigma_{i+1} \sigma_i\sigma_{i+1}  \text{ for } 1\le i \le n-1;   \quad 
  \sigma_{\bar 1} \sigma_{2} \sigma_{\bar 1} =  \sigma_{2} \sigma_{\bar 1}\sigma_{2}   ; \\
&\sigma_n t_{n+1}\sigma_n t_{n+1}= t_{n+1}\sigma_n t_{n+1}\sigma_n. \\  
\end{aligned}
$$	

Unlike the situation in type $\tilde A$,   the number of times $t_{n+1}$ appears in a   reduced expression of some $w$ in $W(\tilde{B}_{n+1})$ does not depend on this expression. 

				\begin{definition} \label{ALB}
				We define the {\em affine length} of $w \in W(\tilde B_{n+1})$  to be the
 multiplicity of $t_{n+1}$  in a (any) reduced expression of $w$. 
 We denote  it by $L(w)$.  
 
    \end{definition}
			
     \subsection{Canonical form for $\tilde B$-type}\label{affC}

 	\begin{definition}\label{Bex}
				An element $u$ in $W(D_{n+1})$ is called 
				$\tilde{B}$-{\rm extremal}  if   $ \sigma_{n} $  appears in a (any) reduced expression of $u$. In this case $u$ can be written uniquely in  the form  $u= 	 \langle m, n] x $ with $-n \le m \le n $ and $x$ in $W(D_{n})$ (hence $t_{n+1}x= xt_{n+1} $). 

  We call {\rm $t_{n+1}$-left reduced expression} of $u$   a reduced expression in which any possible $ \sigma_n t_{n+1}\sigma_n t_{n+1}$ is written $t_{n+1}\sigma_n t_{n+1}\sigma_n$. 
					\end{definition}

Since  elements supported in $ \{ \sigma_{1},  \sigma_{\bar 1}, \dots, \sigma_{n-1}  \}$ commute with $t_{n+1}$, we deduce from 
  \eqref{StembridgeD}, working left to right and aiming at   {\rm $t_{n+1}$-left reduced expressions}, the following Lemma:

\begin{lemma}\label{lemmafull}
Let $w$ be in $W(\tilde B_{n+1})$ with $L(w) =m \ge 2$. Fix a  reduced expression of  $w$ as follows: 
$$
w =  u_1 t_{n+1} u_2 t_{n+1} \dots u_m t_{n+1} u_{m+1}  $$
with $u_s$, for $1\le s \le m+1$, a reduced expression of an element in 
$W(D_{n+1})$.  
Then $u_2, \dots, u_m$ are $\tilde B$-extremal  elements and there is a   reduced expression of $w$ of the  form:  
 \begin{equation}\label{forme1}
w =  \langle   i_1,n ]  t_{n+1}   \langle   i_2,n ] t_{n+1} \dots   \langle  i_m,n ]  t_{n+1} v_{m+1} , \  v_{m+1} \in W(D_{n+1}),  \\
 \end{equation} 
where, if   $i_1 < n+1$,  then   $-n \le i_s \le n-1$ for   $2 \le s \le m $, while if 	
	   $i_1 = n+1$ then    $-n \le i_s \le n-1 $  for   $3 \le s \le m $.     
 \end{lemma}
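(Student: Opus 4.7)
The plan is to mimic the proof of Lemma~\ref{lemmafullA} for type $\tilde A$, replacing $a_{n+1}$ by $t_{n+1}$ and the $W(A_n)$ canonical form~(\ref{Stembridge}) by the $W(D_{n+1})$ canonical form of Theorem~\ref{NFDn}. First I would check that each $u_s$ for $2 \le s \le m$ is $\tilde B$-extremal: if some $u_s$ did not contain $\sigma_n$, it would lie in $W(D_n)$ and commute with $t_{n+1}$, so the substring $t_{n+1} u_s t_{n+1}$ would collapse to $u_s$, producing a shorter expression of $w$ and contradicting reducedness.

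Next I would build the form inductively from left to right. Apply Theorem~\ref{NFDn} to write $u_1 = \langle i_1, n] x_1$ with $x_1 \in W(D_n)$, using the convention $\langle n+1, n] = 1$ so that $i_1 = n+1$ and $x_1 = u_1$ precisely when $u_1 \in W(D_n)$. Since $x_1$ commutes with $t_{n+1}$, the expression becomes $\langle i_1, n] t_{n+1} (x_1 u_2) t_{n+1} \cdots$. The product $x_1 u_2$ still contains $\sigma_n$ (because $u_2$ does and the supports of $x_1$ and $\sigma_n$ are disjoint), so its Stembridge form begins with some $\langle i_2, n]$ with $-n \le i_2 \le n$; split off $\langle i_2, n] x_2$ with $x_2 \in W(D_n)$ and repeat. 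After $m$ iterations one reaches the shape
$$
w = \langle i_1, n] t_{n+1} \langle i_2, n] t_{n+1} \cdots \langle i_m, n] t_{n+1} v_{m+1},
$$
with $-n \le i_s \le n$ for $s \ge 2$ and $i_1 \in \{n+1\} \cup \{-n, \dots, n\}$.

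The crux is then to rule out $i_s = n$ for $2 \le s \le m$ whenever $i_{s-1} \le n$. In that case the brick $\langle i_{s-1}, n]$ ends in $\sigma_n$ and $\langle i_s, n] = \sigma_n$, so the local subword contains the braidable pattern $\sigma_n t_{n+1} \sigma_n t_{n+1}$. Applying the type-$\tilde B$ braid $\sigma_n t_{n+1} \sigma_n t_{n+1} = t_{n+1} \sigma_n t_{n+1} \sigma_n$ together with the commutation of $\langle i_{s-1}, n-1] \in W(D_n)$ with $t_{n+1}$ yields
$$
\langle i_{s-1}, n] t_{n+1} \sigma_n t_{n+1} = t_{n+1} \langle i_{s-1}, n] t_{n+1} \sigma_n .
$$
The new leading $t_{n+1}$ corresponds to refreshing the expression with $i_1' = n+1$, while the trailing $\sigma_n$ is propagated rightward through the next brick by the type-$D$ identity $\sigma_n \langle k, n] = \langle k, n] \sigma_{n-1}$ for $k \le n-1$ (which follows from the braid $\sigma_n \sigma_{n-1} \sigma_n = \sigma_{n-1} \sigma_n \sigma_{n-1}$ and distant commutations), and since $\sigma_{n-1}$ commutes with $t_{n+1}$ the leftover continues down to $v_{m+1}$ where it is absorbed. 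The subcase $k = n$ cannot occur here, because it would enable the cancellation $\sigma_n \sigma_n = 1$ followed by a $t_{n+1}^2 = 1$, contradicting reducedness of the original expression. Each pass strictly decreases the leftmost violating index, so the procedure terminates and produces the required form; the residual possibility $i_2 = n$ when $i_1 = n+1$ is precisely the stated exception.

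The main obstacle is controlling this last rewriting: verifying that it preserves both reducedness and the brick shape, that the propagated $\sigma_n$ never creates a new forbidden $i_s = n$ further to the right, and that the iteration terminates. In comparison to type $\tilde A$, things are eased by the fact that the type-$\tilde B$ double-edge braid is length-preserving, so no delicate length bookkeeping is needed beyond tracking the positions of the $t_{n+1}$'s.
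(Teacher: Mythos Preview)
Your overall strategy --- prove $\tilde B$-extremality by commutation with $t_{n+1}$, then do a left-to-right pass using the $W(D_{n+1})$ canonical form to extract a prefix $\langle i_s,n]$ from each block, then use the braid $\sigma_n t_{n+1}\sigma_n t_{n+1}=t_{n+1}\sigma_n t_{n+1}\sigma_n$ to eliminate the case $i_s=n$ --- is exactly what the paper intends by ``working left to right and aiming at $t_{n+1}$-left reduced expressions''. The extremality argument and the first pass are correct.

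The gap is in your propagation step. The identity you invoke,
\[
\sigma_n\,\langle k,n] \;=\; \langle k,n]\,\sigma_{n-1}\qquad(k\le n-1),
\]
is \emph{false} for some negative $k$. For instance with $n=3$ and $k=-2$ one has $\langle -2,3]=\sigma_2\sigma_1\sigma_{\bar 1}\sigma_2\sigma_3=[-1,2,4,-3]$ as a signed permutation, and then
\[
\sigma_3\,\langle -2,3]=[-1,2,3,-4]\quad\text{while}\quad \langle -2,3]\,\sigma_2=[-1,4,2,-3],
\]
which are distinct. The reason your braid-plus-commutation justification breaks is that for $k\le -2$ the word $\langle k,n]$ \emph{begins} with $\sigma_{|k|}$, and once $|k|=n-1$ this leading letter does not commute with $\sigma_n$, so you cannot slide $\sigma_n$ all the way to the final $\sigma_{n-1}\sigma_n$. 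Consequently the claim that ``the leftover continues down to $v_{m+1}$'' is unjustified: even when the leftover happens to be $\sigma_{n-1}$, it does not commute with the next brick $\langle i_{s+1},n]$, so it cannot simply travel past it.

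The repair is short and closer to the paper's one-line proof. First observe directly that $i_s=n$ for $s\ge 3$ is impossible in a \emph{reduced} expression: the braid gives
$\langle i_{s-1},n]\,t_{n+1}\sigma_n t_{n+1}=\langle i_{s-1},n-1]\,t_{n+1}\sigma_n t_{n+1}\sigma_n$,
and since $\langle i_{s-1},n-1]\in W(D_n)$ commutes with $t_{n+1}$, the leading $t_{n+1}$ cancels against the $t_{n+1}$ to its left, a length drop. So the only possible violation is at $s=2$ with $i_1\le n$. Apply the braid there (this resets $i_1'=n+1$, $i_2'=i_1$), and for the trailing $\sigma_n$ simply \emph{redo the first-pass step} on bricks $3,\dots,m$: write $\sigma_n\langle i_3,n]=\langle i_3',n]\,x_3'$ in canonical form (no special identity needed), push $x_3'\in W(D_n)$ past $t_{n+1}$, and continue. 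Reducedness again forbids any new $i_s'=n$ for $s\ge 3$, so the process terminates after this single sweep.
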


We observe that for any $i$, $j$,  $-n \le i  \le n+1$, and   $-n\le j \le n$,   the expression $\langle   i  ,n ]  t_{n+1}$ is almost rigid (that is, rigid up to the exchange of 
$\sigma_1$ and $  \sigma_{\bar 1}$) hence reduced, with $\mathscr R(\langle   i  ,n ]  t_{n+1}) = \{ t_{n+1}\}$, and the expression 
 $\langle   i  ,n ]  t_{n+1}\langle   j  ,n ]  t_{n+1}$ is reduced with 
$   
\{ t_{n+1}\} \subseteq  \mathscr R(\langle   i  ,n ]  t_{n+1}\langle   j  ,n ]  t_{n+1})  
\subseteq \{ t_{n+1}, \sigma_n\}. 
$ 
But we need to be more precise.   We order  $S=\{  \sigma_{\bar 1},\sigma_{1},  \dots, \sigma_{n},t_{n+1} \}$ exactly as written.

     \begin{lemma} We list below on the left-hand side the elements $e\! =\! \langle   i  ,\! n ]  t_{n+1}\langle   j  ,\! n ]  t_{n+1}$ such that $\sigma_n $ belongs to $\mathscr R(e)$, and give on the right-hand side their right lex-min reduced expression.  
     
     \begin{enumerate}
     
      \item  When $1 \le i \le  j < n+1$, or when $ -1\le i \le 0$ and $2\le j< n+1$, or when $i \le -2$ and $|i| < j$, we have: 
$$ \langle   i,n ]  t_{n+1}   \langle   j,n ] t_{n+1} =  \langle   j+1,n ]  t_{n+1}   \langle   i,n ] t_{n+1} \sigma_n.$$

        \item  $ \langle   -1,n ]  t_{n+1}   \langle   -1,n ] t_{n+1} =  \langle   2,n ]  t_{n+1}   \langle   -1,n ] t_{n+1} \sigma_n$. \\

        \item  $ \langle   0,n ]  t_{n+1}   \langle   -1,n ] t_{n+1} =  \langle   1,n ]  t_{n+1}   \langle   -1,n ] t_{n+1} \sigma_n$,
 
\noindent 
  $ \langle   0,n ]  t_{n+1}  \  \langle   1,n ] \  t_{n+1} =  \langle   -1,n ]  t_{n+1}   \langle    1,n ] t_{n+1} \sigma_n$.\\

         \item  $ \langle   -2,n ]  t_{n+1}   \langle   0,n ] t_{n+1} =  \langle   0,n ]  t_{n+1}   \langle   0,n ] t_{n+1} \sigma_n$,  

\noindent 
 $ \langle   -2,n ]  t_{n+1}   \langle   1,n ] t_{n+1} =  \langle -1,n ]  t_{n+1}   \langle   0,n ] t_{n+1} \sigma_n$, 

\noindent
$ \langle   -2,n ]  t_{n+1}   \langle   -1,n ] t_{n+1} =  \langle  1,n ]  t_{n+1}   \langle   0,n ] t_{n+1} \sigma_n$, 

\noindent
$ \langle   -2,n ]  t_{n+1}   \langle   2,n ] t_{n+1} =  \langle  2,n ]  t_{n+1}   \langle   0,n ] t_{n+1} \sigma_n$. \\   
      
      \item When $i \le -3$, and $j=0$ or $2  \le j  \le |i| \le n$ or 
$i < j \le -2$, we have: 
$$ \langle    i,n ]  t_{n+1}   \langle   j,n ] t_{n+1} =  \langle   j,n ]  t_{n+1}   \langle    
 i+1,n ] t_{n+1} \sigma_n. $$ 

   \item When $i \le -3$ and $j=\pm 1$, we have: 
$$ \langle    i,n ]  t_{n+1}   \langle   j,n ] t_{n+1} =  \langle   -j,n ]  t_{n+1}   \langle    
 i+1,n ] t_{n+1} \sigma_n. $$

\end{enumerate}

 \end{lemma}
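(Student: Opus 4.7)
The plan is to treat the six cases as manifestations of a single mechanism: in each, one shows that $e\sigma_n$ has length $l(e)-1$ by producing an explicit reduced expression whose rightmost letter is no longer $\sigma_n$, and then verifies that this expression is right lex-min. I would first establish a handful of auxiliary commutations inside $W(D_{n+1})$ that will be used repeatedly: for $\sigma_n$ to travel to the right past $\langle j,n]$, one uses $\sigma_n\sigma_k=\sigma_k\sigma_n$ for $k\le n-2$ together with the braid $\sigma_{n-1}\sigma_n\sigma_{n-1}=\sigma_n\sigma_{n-1}\sigma_n$; and for $\sigma_n$ to interact with $t_{n+1}$ one uses only the affine braid $\sigma_n t_{n+1}\sigma_n t_{n+1}=t_{n+1}\sigma_n t_{n+1}\sigma_n$. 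These facts, together with rule \eqref{easyrule} for crossing $\sigma_1\sigma_{\bar 1}$, are what drive every rewriting.

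The core computation is then the following. Starting from $e\sigma_n=\langle i,n]t_{n+1}\langle j,n]t_{n+1}\sigma_n$, one uses the affine braid to transform $t_{n+1}\sigma_n$ on the right into $\sigma_n t_{n+1}\sigma_n t_{n+1}\sigma_n^{-1}$; this produces a $\sigma_n$ just to the right of $\langle j,n]$, which can be absorbed by noting that $\langle j,n]\sigma_n = \langle j,n-1]$ or by shifting it one step to the left via the braid relation, depending on whether $\sigma_{n-1}$ is available. After a second application of the braid between the remaining $t_{n+1}$ and $\sigma_n$, the excess $\sigma_n$ is pushed further to the left, into the block $\langle i,n]$. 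At this point the behaviour splits according to the sign and size of $i$ and $j$: the six cases listed in the lemma correspond exactly to the distinct ways in which the travelling $\sigma_n$ can be absorbed by the left-hand block $\langle i,n]$, using the identities $\sigma_k\langle i,n]=\langle i,n]\sigma_{k-1}$-style rules for $k>|i|+1$, the special crossing at $k=2$ which invokes \eqref{easyrule} when $\sigma_1\sigma_{\bar 1}$ appears, and the parity-swap $\sigma_1\leftrightarrow\sigma_{\bar 1}$ for case (6).

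To extract the stated closed forms, I would handle case (1) first and treat it as the generic case: here both $i$ and $j$ are in a range where nothing special happens and the $\sigma_n$ slides cleanly to the left, giving $\langle j+1,n]t_{n+1}\langle i,n]t_{n+1}\sigma_n$. Cases (2), (3), (5), (6) are perturbations of case (1), differing only in the relabelling $\sigma_1\leftrightarrow\sigma_{\bar 1}$ or in a shift by $\pm 1$ forced by the boundary behaviour at $i=\pm 1,0$; each follows by the same slide, with one extra commutation or one application of \eqref{easyrule}. Case (4), where $i=-2$, is the genuinely exceptional one, because the block $\langle -2,n]=\sigma_2\sigma_1\sigma_{\bar 1}\sigma_2\cdots\sigma_n$ has the substring $\sigma_2\sigma_1\sigma_{\bar 1}\sigma_2$ which, combined with a neighbouring $\sigma_2$ from $\langle j,n]$, forces the use of \eqref{easyrule}; this produces the four sub-identities listed, according to whether $j=-1,0,1,2$.

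Once the identities are established, reducedness is automatic: both sides have the same word length, namely $l(e)-1=2(n+1)-1+\bigl((n-i+1)+(n-j+1)\bigr)-1$ with the usual convention $-i+1$ in place of $i+1$ when $i<0$, and the right-hand sides are already written in a form to which Theorem \ref{NFDn} applies block by block. Right lex-minimality is then a matter of inspection: the rightmost letter of each right-hand side is $\sigma_n$, which is maximal in the ordering $S=\{\sigma_{\bar 1},\sigma_1,\dots,\sigma_n,t_{n+1}\}$ apart from $t_{n+1}$, and by Lemma \ref{minlength} it is precisely the presence of $\sigma_n$ at the right that characterises the coset behaviour; one then compares the remaining prefix with Theorem \ref{NFDn} applied to $W(D_{n+1})$. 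The main obstacle is bookkeeping: verifying that in case (4), and in the boundary cases (2)--(3), the rewriting produces exactly the prefix predicted by the lemma rather than a reduced but non-canonical word; this requires checking one commutation at a time near the $\sigma_1\sigma_{\bar 1}$ junction, which is where \eqref{easyrule} must be applied in exactly the right place.
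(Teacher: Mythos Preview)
Your approach is the same as the paper's: the paper simply says ``All equalities result from straightforward calculations, some of which use the easyrule \eqref{easyrule},'' and your sketch expands on exactly this, organizing the case analysis around the braid $\sigma_n t_{n+1}\sigma_n t_{n+1}=t_{n+1}\sigma_n t_{n+1}\sigma_n$, ordinary $A_n$-type commutations, and \eqref{easyrule} near $\sigma_1,\sigma_{\bar 1}$.

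One concrete slip in your ``core computation'': the rewriting ``transform $t_{n+1}\sigma_n$ on the right into $\sigma_n t_{n+1}\sigma_n t_{n+1}\sigma_n^{-1}$'' is not a valid identity (the right-hand side equals $t_{n+1}\sigma_n t_{n+1}$, not $t_{n+1}\sigma_n$). The clean way to run case~(1) is: write $\langle j,n]=\langle j,n-1]\sigma_n$, commute $\langle j,n-1]$ past the left $t_{n+1}$, then apply the affine braid once to $t_{n+1}\sigma_n t_{n+1}\sigma_n$, giving
\[
e\sigma_n=\langle i,n]\,\langle j,n]\,t_{n+1}\sigma_n t_{n+1},
\]
after which the $W(A_n)$ identity $\langle i,n]\langle j,n]=\langle j+1,n]\langle i,n-1]$ (for $i\le j$) and one more commutation yield $\langle j+1,n]t_{n+1}\langle i,n]t_{n+1}$. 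The remaining cases are handled by the same move, with \eqref{easyrule} inserted when the product $\langle i,n]\langle j,n]$ involves $\sigma_1\sigma_{\bar 1}$; this is exactly what you outline for cases~(2)--(6). Once you replace the faulty step with this, your argument goes through and matches the paper.
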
 

\begin{proof} All equalities   result from straightforward calculations, some of which use the easyrule \eqref{easyrule}. Note  that $ \sigma_{\bar 1}$  and $\sigma_{1}$ play similar roles, except for the order.
\end{proof}  

Since the lengths of the elements considered are 
  $$ \ell( \langle  j ,n ])   = n-|j|+ 1 \ \text{ if } \ j\ge -1,   
\quad \ell( \langle j ,n ])   = n+|j|  \  \text{ if } \ j\le -2,    
$$  
this Lemma has a rather simple consequence:

\begin{corollary} Let $i, j$ such that $-n \le i  \le n+1$  and   $-n\le j \le n$. The expression 
$ \langle    i,n ]  t_{n+1}   \langle   j,n ] t_{n+1}$ is right lex-min if and only if 
$ \ell( \langle  i ,n ])  \le  \ell( \langle  j ,n ]) $ and 
\begin{enumerate}
\item if $ \ell( \langle  i ,n ])  < n$ (i.e. $i \ge 2$) then  $\ell(   \langle    i,n ] ) < \ell( \langle    j,n ] )$; 
\item if $ \ell( \langle  i ,n ])  = n$ (i.e. $i=\pm 1$)  then  either $\ell(   \langle    i,n ] ) < \ell( \langle    j,n ] )$ or $j = -i$.  
\end{enumerate}
\end{corollary}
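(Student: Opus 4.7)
The approach is to recognize the displayed expression as right lex-min exactly when $\sigma_n \notin \mathscr{R}(e)$, and then to translate the Lemma's six cases into the stated length inequalities. From the remark immediately preceding the Lemma, we have $\{t_{n+1}\} \subseteq \mathscr{R}(e) \subseteq \{t_{n+1}, \sigma_n\}$. In the fixed order $\sigma_{\bar 1} < \sigma_1 < \sigma_2 < \cdots < \sigma_n < t_{n+1}$, the rightmost letter of the right lex-min form of $e$ is the smallest element of $\mathscr{R}(e)$, so this letter is $t_{n+1}$ precisely when $\sigma_n \notin \mathscr{R}(e)$, which by the Lemma is exactly when $(i,j)$ falls outside the six enumerated cases.

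The core of the proof is then a case-by-case verification, organized by the value of $i$ in the ranges $i \ge 2$, $i = 1$, $i = 0$, $i = -1$, $i = -2$, $i \le -3$, that the complement of the Lemma's list matches the corollary's conditions. Using $\ell(\langle j, n]) = n - j + 1$ for $j \ge -1$ and $\ell(\langle j, n]) = n - j$ for $j \le -2$, the bookkeeping is quick. For instance, when $i \ge 2$ (so $\ell(\langle i, n]) < n$), the pairs excluded by Lemma case~(1) are those with $i \le j \le n$, whose complement $i > j$ or $j \le 1$ coincides with $\ell(\langle i, n]) < \ell(\langle j, n])$, subcase~(1) of the corollary. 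When $i = \pm 1$ (so $\ell(\langle i, n]) = n$), subcase~(2) arises because the only length-$n$ value of $\ell(\langle j, n])$ with $j \ne i$ is $j = -i$, and this is precisely the non-listed pair. The remaining ranges $i = 0$, $i = -2$, and $i \le -3$ yield similarly short matches against cases (3), (4), (5), (6) of the Lemma.

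The final point is to confirm that once the rightmost letter is fixed as $t_{n+1}$, the remainder $\langle i, n] t_{n+1} \langle j, n]$ is itself right lex-min for $e t_{n+1}$. The key computation is $\mathscr{R}(e t_{n+1}) = \{\sigma_n\}$: the expression ends in $\sigma_n$ (since $j \le n$), while $t_{n+1} \notin \mathscr{R}(e t_{n+1})$ because $(e t_{n+1}) t_{n+1} = e$ has greater length; for any $s \in \{\sigma_{\bar 1}, \sigma_1, \ldots, \sigma_{n-1}\}$, $s$ commutes with $t_{n+1}$ and $e t_{n+1} s t_{n+1} = e s$, so $s \in \mathscr{R}(e t_{n+1})$ would force $s \in \mathscr{R}(e) \subseteq \{t_{n+1}, \sigma_n\}$, a contradiction. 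Peeling off letters of $\langle j, n]$ one at a time and reducing ultimately to the almost-rigid expression $\langle i, n] t_{n+1}$ (with $\mathscr{R} = \{t_{n+1}\}$), a straightforward induction confirms the displayed expression is right lex-min. The main obstacle is purely notational: keeping the six cases of the Lemma aligned with the three subcases of the corollary in Step~2; the length comparisons themselves are elementary.
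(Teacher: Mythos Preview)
Your argument is correct and matches the paper's, which records the Corollary as an immediate consequence of the preceding Lemma together with the displayed length formulas; your case-by-case translation is precisely the content of that remark. Your step~3, confirming the expression is right lex-min once $\sigma_n\notin\mathscr R(e)$, fills a detail the paper leaves implicit; the peeling works, but a quicker route is the canonical factorization of \S\ref{FdC}: since $\mathscr R(\langle i,n]\,t_{n+1})=\{t_{n+1}\}$, the element $e\,t_{n+1}$ decomposes as $(\langle i,n]\,t_{n+1})\cdot\langle j,n]$ with both factors already in right lex-min form.
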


This Corollary provides the canonical form for elements of affine length at most~$2$. 
Eventually we get  the   following Theorem that gives  canonical reduced expressions for the {distinguished   representatives of $W(\tilde B_{n+1})/W(D_{n+1})$}, which  we call {\it affine blocks} as before.  A canonical reduced expression  for elements of $W(\tilde B_{n+1})$ is then obtained by plugging in \eqref{StembridgeD}.

\begin{theorem}\label{AB}

Let $w$ be in $W(\tilde B_{n+1})$, then there exist unique integers $ m \ge 0$ and  
$j_s$  for $1\le s \le m$,  and a unique element $x$ in $W(D_{n+1})$ such that : 
$$
w=  \left( \prod_{s=1}^m  ( \langle  j_s,n ] t_{n+1} )  \right)  x ,  
$$
with $-n \le j_1 \le n+1 \ $ and $-n \le j_s \le n  \ $  for  $ \  2 \le s \le m $, and, 
 for  $ \  1 \le s \le m-1 $:

\begin{itemize}
\item  $ \ell( \langle  j_s ,n ])  \le  \ell( \langle  j_{s+1} ,n ]) $ ; 

  \item if $ \ell( \langle  j_s ,n ])  < n$  then  $\ell(   \langle    j_s,n ] ) < \ell( \langle    j_{s+1},n ] )$; 

\item if $ \ell( \langle  j_s ,n ])  = n$   then  either $\ell(   \langle    j_s,n ] ) < \ell( \langle    j_{s+1},n ] )$ or $j_{s+1} = -j_s$.

\end{itemize}
Any  expression $\prod_{s=1}^m  ( \langle  j_s,n ] t_{n+1} ) $ with those conditions is reduced and right lex-min with affine length $m$. \\
	 \end{theorem}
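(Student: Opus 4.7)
\smallskip

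My plan is to follow the same overall architecture as the proof of Theorem \ref{AA} in type $\tilde A$, combined with the right lex-min framework from section \ref{basics} applied to the ordering $S=\{\sigma_{\bar 1}, \sigma_1, \ldots, \sigma_n, t_{n+1}\}$. The strategy is to show existence first, then reducedness of any expression satisfying the stated conditions, and finally uniqueness via the right lex-min characterization.

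For existence, I would start from an affine length reduced expression of $w$. Lemma \ref{lemmafull} already produces a writing of the shape $\prod_{s=1}^m (\langle i_s, n] t_{n+1}) \cdot v_{m+1}$ with $v_{m+1} \in W(D_{n+1})$. To enforce the pairwise conditions on the successive $i_s$, I would scan the expression from left to right and, whenever a pair $(\langle i_s,n]t_{n+1})(\langle i_{s+1},n]t_{n+1})$ violates the conditions of the Corollary, apply the relevant formula (items (1)--(6) of the Lemma preceding it) to rewrite the pair as $(\langle j_s,n]t_{n+1})(\langle j_{s+1},n]t_{n+1})\sigma_n$, producing one reduced pair plus a $\sigma_n$ that travels rightwards to be absorbed into the $W(D_{n+1})$ factor. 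Since each such rewrite strictly decreases a well-chosen monovariant (for instance, the vector $(\ell(\langle i_s,n]))_s$ under the right lex-min order), the procedure terminates and yields an expression satisfying the pairwise conditions, followed by an element of $W(D_{n+1})$, which I then write in the canonical form of Theorem \ref{NFDn}.

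For reducedness, I would argue by induction on $m$. The base $m=0,1$ is the already-noted almost-rigidity of $\langle i,n]t_{n+1}$. For the inductive step, given that $w_{m-1} = \prod_{s=1}^{m-1}(\langle j_s,n]t_{n+1})$ is reduced and right lex-min, I use Lemma \ref{Bourbaki} in the spirit of subsection \ref{maincomputation}: assume for contradiction that appending $\langle j_m,n]t_{n+1}$ produces a non-reduced expression, i.e.\ that the rightmost $t_{n+1}$ has a hat partner somewhere to its left. Using $\mathscr R(\langle i,n]t_{n+1})=\{t_{n+1}\}$ and the structure of the commutant of $t_{n+1}$ in $W(D_{n+1})$, the hat partner is forced to lie inside some earlier brick $\langle j_s,n]$, $s<m$; one then conjugates by the part of the expression lying to its right and reduces to a case contradicting the pairwise conditions on $(j_{m-1},j_m)$ (via the classification in the length-$2$ Corollary). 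This is the step I expect to be the main obstacle, since it requires a careful case analysis mirroring Lemma \ref{casem2reduced}, but the type $\tilde B$ bookkeeping is in fact lighter because $t_{n+1}$ has only one non-commuting neighbour $\sigma_n$ among the generators.

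For the right lex-min and uniqueness assertion, I observe that any reduced expression of the affine block ends in $t_{n+1}$, the largest generator in our ordering, and that Lemma \ref{minlength} characterizes the distinguished representatives of $W(\tilde B_{n+1})/W(D_{n+1})$ as exactly those elements whose right lex-min form ends with $t_{n+1}$. Applying this iteratively to the successive quotients by the parabolic $W(D_{n+1})$ subgroup below each leading $t_{n+1}$ shows that the factorization $\prod_{s=1}^m(\langle j_s,n]t_{n+1})\cdot x$ is the canonical factorization \eqref{StemFactorization} for the appropriate descending chain of Coxeter subgroups. Uniqueness of the $j_s$ and of $x$ then follows from uniqueness of the right lex-min normal form; conversely, the pairwise conditions in the statement are precisely those needed for the product to be right lex-min, as read off the Corollary, which completes the equivalence.
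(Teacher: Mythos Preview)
Your proposal is correct and follows essentially the same route as the paper, which explicitly omits most of the proof and remarks that the type $\tilde B$ case is ``notably easier'' than type $\tilde A$. The architecture you describe --- existence from Lemma~\ref{lemmafull} combined with the length-$2$ exchange rules to enforce the pairwise conditions, reducedness by an induction in the spirit of \S\ref{maincomputation}, and uniqueness via the right lex-min framework of \S\ref{FdC} --- is precisely what the paper sets up with the sequence Lemma~\ref{lemmafull}, the unnamed classification Lemma, and the Corollary preceding the Theorem, and your observation that the bookkeeping is lighter because $t_{n+1}$ has only the single non-commuting neighbour $\sigma_n$ matches the paper's own remark.
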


\subsection{Left multiplication}

We remark that case $\tilde B$ is notably easier than the simply laced case $\tilde A$.
As in the $\tilde A$ case, we can study left multiplication by a simple reflection, either directly, or as a particular case of   \cite[Theorem 2.6]{duCloux_transducer} (see Theorem \ref{lefts} above). As for right multiplication, it turns out to be also easier that in type $\tilde A$.
\begin{proposition} 
Let $w=\prod_{r=1}^m   \langle  j_r,n ] t_{n+1}$ be an affine block as in Theorem \ref{AB} and let $s  \in S$.  
Then the canonical form of $sw$ is given as follows: 
\begin{enumerate}
\item If $s=t_{n+1}$, then it is 
\begin{itemize}
\item either $ t_{n+1}\prod_{r=1}^m   \langle  j_r,n ] t_{n+1} $ 
if $j_1 \le n$, 
\item or $  \prod_{r=2}^m   \langle  j_r,n ] t_{n+1} $ if $j_1 = n+1$. 
\end{itemize}

\item If $s \in \underline S$ and $sw$ is not an affine block, it is    $ (\prod_{r=1}^m   \langle  j_r,n ] t_{n+1}) \sigma_i$ for some $\sigma_i$ in $\underline S$. 

\item If $s \ne t_{n+1}$ and $sw$ is  an affine block, it is  $ \prod_{r=1}^m   \langle  j'_r,n ] t_{n+1}$ where, for some $k$, we have 
$j'_r=j_r$ if $r\ne k$, and 
\begin{itemize}
\item
$j'_k= j_k-1$ if $l(sw)>l(w)$ 
\item 
or $j'_k= j_k+1$ if $l(sw)<l(w)$. 
\end{itemize}
\end{enumerate} 

As for right multiplication, consider $z=wx$ with $x \in W(D_{n+1})$. If $x$ is $\tilde B$-extremal   we have $\mathscr{R} (z) = \mathscr{R} (x) $, otherwise we have   $\mathscr{R} (z) = \mathscr{R} (x) \cup  \{ t_{n+1}  \}  $. 
 \end{proposition}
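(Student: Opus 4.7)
The plan is to mimic the strategy of Theorem \ref{lefttimes}: first build a ``single-brick automaton'' describing $s \cdot (\langle j,n] t_{n+1})$ for each $s \in S$, then propagate through an affine block by induction on the number $m$ of bricks, appealing to Theorem \ref{AB} and Lemma \ref{Soergel} to recognise the output as (or not as) a distinguished representative. The case $s = t_{n+1}$ is immediate: if $j_1 = n+1$ the leftmost brick is $t_{n+1}$ itself, so $t_{n+1}^2 = 1$ telescopes to $\prod_{r \ge 2}\langle j_r,n]t_{n+1}$, which is still canonical because the surviving subfamily $(j_r)_{r \ge 2}$ still satisfies the inequalities of Theorem \ref{AB}; if $j_1 \le n$ then prepending the brick $\langle n+1,n]t_{n+1} = t_{n+1}$ of length $0 < \ell(\langle j_1,n])$ yields a new canonical affine block, since the required strict length inequality is automatic.

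For $s = \sigma_u \in \underline S$, the single-brick calculation is a finite case analysis: using the braid $\sigma_i\sigma_{i\pm 1}\sigma_i=\sigma_{i\pm 1}\sigma_i\sigma_{i\pm 1}$, the commutation $\sigma_u t_{n+1} = t_{n+1}\sigma_u$ for $u < n$, and the easy rule \eqref{easyrule} near the boundary indices $j= \pm 1, \pm 2$, one shows that $\sigma_u \langle j,n] t_{n+1}$ is always either of the form $(\langle j,n] t_{n+1}) \sigma_v$ for some $\sigma_v \in \underline S$ (length $+1$; $\sigma_u$ ``slides through''), or a new affine brick $\langle j',n] t_{n+1}$ with $\ell(\langle j',n])=\ell(\langle j,n]) \pm 1$. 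The induction step on $m$ then goes as in Theorem \ref{lefttimes}: if the automaton applied to $\langle j_1,n] t_{n+1}$ returns $(\langle j_1,n] t_{n+1})\sigma_v$, apply the induction hypothesis to $\sigma_v$ acting on the affine block $\prod_{r \ge 2}\langle j_r,n]t_{n+1}$; if it returns $\langle j_1',n] t_{n+1}$, check directly that the canonicity inequality between $j_1'$ and $j_2$ is preserved (which is where the precise statement of the pairwise length condition in Theorem \ref{AB} is crucial). In the rare case when the modification of an index makes the block violate canonicity, Lemma \ref{Soergel} (with $I$ the generators of $W(D_{n+1})$) forces $sw$ to land outside the set of distinguished representatives, and thus falls under case (2); the explicit $\sigma_i \in \underline S$ is produced by the single-brick automaton itself.

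For the right multiplication claim, Theorem \ref{AB} gives $\mathscr{R}(w) = \{t_{n+1}\}$, hence $\mathscr{R}(x) \subseteq \mathscr{R}(z) \subseteq \mathscr{R}(x) \cup \{t_{n+1}\}$ and everything reduces to deciding when $t_{n+1} \in \mathscr{R}(z)$. Writing $x = \langle m, n] y$ with $y \in W(D_n)$ commuting with $t_{n+1}$, we compute $z t_{n+1} = w \langle m, n] t_{n+1} \cdot y$. If $x$ is not $\tilde B$-extremal, then $\sigma_n$ is absent and $z t_{n+1} = w t_{n+1} y$, which telescopes via $t_{n+1}^2=1$ and shortens $z$, so $t_{n+1} \in \mathscr{R}(z)$. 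If $x$ is $\tilde B$-extremal, then $w \langle m, n] t_{n+1}$ is the product of an affine block with one more affine brick; by the commutation/rewriting rules preceding Theorem \ref{AB} this product equals a canonical affine block of length $\ell(w)+\ell(\langle m,n])+1$, hence is reduced with right descent set $\{t_{n+1}\}$, and appending $y$ preserves length by minimality of affine blocks in their $W(D_{n+1})$-cosets; thus $\ell(zt_{n+1})=\ell(z)+1$ and $\mathscr{R}(z)=\mathscr{R}(x)$.

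The main obstacle is the bookkeeping for the single-brick automaton at the boundary indices $j \in \{0, \pm 1, \pm 2\}$, where the interaction between $\sigma_1$, $\sigma_{\bar 1}$ and $\sigma_2$ forces a case split parallel to items (2)--(4) of the Lemma preceding Theorem \ref{AB}. Once that is cleanly tabulated, the propagation argument carries through essentially as in type $\tilde A$; simpler in fact, because affine bricks in type $\tilde B$ are rigid up to the $\sigma_1 \leftrightarrow \sigma_{\bar 1}$ swap, so no long chain of braid relations needs to be tracked.
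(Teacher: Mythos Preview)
The paper does not prove this Proposition: it is stated at the end of \S 6 with the remark that the left-multiplication claim ``can be studied \dots either directly, or as a particular case of \cite[Theorem 2.6]{duCloux_transducer}'' and that the right-multiplication part ``turns out to be also easier than in type $\tilde A$''. So there is nothing to compare against; your direct approach for parts (1)--(3), modelled on Theorem \ref{lefttimes}, is exactly the route the paper gestures at.

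There is, however, a genuine gap in your right-multiplication argument, and it points to an inaccuracy in the Proposition as stated. You assert that when $x$ is $\tilde B$-extremal, the product $w\,\langle m,n]\,t_{n+1}$ ``equals a canonical affine block of length $\ell(w)+\ell(\langle m,n])+1$'' after applying the rewriting rules of the Lemma preceding Theorem \ref{AB}. But those rules can cascade: the rule
$\langle i,n]t_{n+1}\langle j,n]t_{n+1}=\langle j+1,n]t_{n+1}\langle i,n]t_{n+1}\sigma_n$
may produce $j+1=n+1$, so that the new leftmost brick is the bare $t_{n+1}$, which then cancels against the preceding brick of $w$. Concretely, take $w=\langle 2,n]t_{n+1}\langle 1,n]t_{n+1}$ (a legitimate affine block) and $x=\sigma_n$ (which is $\tilde B$-extremal). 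One computes
\[
wx\,t_{n+1}=\langle 2,n]t_{n+1}\bigl(\langle 1,n]t_{n+1}\sigma_n t_{n+1}\bigr)
=\langle 2,n]t_{n+1}\cdot t_{n+1}\langle 1,n]t_{n+1}\sigma_n
=\langle 1,n]t_{n+1}\langle 1,n],
\]
of length $2n+1<\ell(wx)=2n+2$. Hence $t_{n+1}\in\mathscr R(wx)$ even though $x$ is $\tilde B$-extremal. Your argument breaks precisely at the step where you claim no length is lost in the rewriting.

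A smaller issue affects part (3): your single-brick automaton at the boundary does not actually yield $j'_k=j_k\pm 1$. For instance $\sigma_{\bar 1}\cdot\langle 2,n]t_{n+1}=\langle -1,n]t_{n+1}$, which is an affine block with $j'_1=-1\neq j_1-1=1$. The brick length changes by exactly one, but the \emph{index} does not; so the literal statement $j'_k=j_k\pm 1$ fails around $j\in\{-1,0,1,2\}$. What survives is the weaker (and true) assertion that exactly one brick changes and its length changes by one. You should flag this as a needed correction to the statement rather than claim to have proved it as written.
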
 
 
  	We recall from \cite[Corollary 2.2, Theorem 2.6]{al2019BandD}  that the  homomorphism  $E_n : W(\tilde B_{n }) \longrightarrow W(\tilde B_{n+1})$ that is the identity on $\underline S$ and maps $t_n$ to $\sigma_n t_{n+1} \sigma_n$ is injective and sends reduced expression to reduced expression, i.e. for any $w \in W(\tilde B_{n })$ we have:
$$
l(E_n(w))= l(w)+ 2 L(w) \quad \text{ and } \  L(E_n(w))= L(w). 
$$
On this property relies in {\it loc.cit.} the proof of the faithfulness of the tower of Hecke algebras of type $\tilde B$  \cite[Theorem 3.2]{al2019BandD}. So for type $\tilde B$ we don't need the equivalent of   Theorem \ref{towerandcanonical}, which would be easy 
to write in case it was needed.

\section{Canonical form for $\tilde D$-type}\label{affD}

In this last section we produce a canonical reduced expression for elements of $W(\tilde{D}_{n+1} ) $, with short proofs drawing on section \ref{basics}.

\subsection{Canonical form for $\tilde D$-type} 
			
For $n \ge 3$, we  let $W(\tilde{D}_{n+1}) $ be the affine Coxeter group of $\tilde{D}$-type with $n+2$ generators in which $W(D_{n+1})$ could be seen a parabolic subgroup in two ways. We make our choice by presenting $W(\tilde{D}_{n+1} ) $ with the following Coxeter diagram: \\
			
			\begin{figure}[ht]
				\centering
				\begin{tikzpicture}

\filldraw (-1.5,0) circle (2pt);
  \node at (-1.5,-0.5) {$\sigma_1$}; 

  \draw (-1.5,0) -- (0, 0);

\filldraw (0,1.5) circle (2pt);
 \node at (0,2) {$ \sigma_{\bar 1}$}; 

  \draw (0,1.5) -- (0, 0);
    
  \filldraw (0,0) circle (2pt);
  \node at (0,-0.5) {$\sigma_{2}$}; 
   
  \draw (0,0) -- (1.5, 0);

  \filldraw (1.5,0) circle (2pt);
  \node at (1.5,-0.5) {$\sigma_{3}$};

  \draw (1.5,0) -- (3, 0);

  \node at (3.4,0) {$\dots$};

  \draw (4,0) -- (5.5, 0);
  
  \filldraw (5.5,0) circle (2pt);
  \node at (5.5,-0.5) {$\sigma_{n-1}$};
 
  \draw (5.5,0) -- (7, 0);
  
  \filldraw (5.5,1.5) circle (2pt);
 \node at (5.5,2) {$ \sigma_{\bar {n}}$}; 

  \draw (5.5,1.5) -- (5.5,0);
  
  \filldraw (7,0) circle (2pt);
  \node at (7,-0.5) {$\sigma_{n}$};

               \end{tikzpicture}
			\end{figure}
In other words the group  $W(\tilde{D}_{n+1} ) $ has a presentation given by the set of generators 
$S=\{  \sigma_{\bar 1}, \sigma_{1},   \dots,  \sigma_{n-1}, \sigma_{n} ,\sigma_{\bar {n}} \}$ and the relations: 
$$ \begin{aligned}
&\sigma_{\bar 1}^2 =  \sigma_{\bar {n}}^2 =1 \text{ and } \sigma_i^2 = 1  \text{ for } 1\le i \le n ; \\ 
&\sigma_i \sigma_{j} = \sigma_{j} \sigma_i \text{ for } 1\le i, j \le n, \ |i-j|\ge  2 ; \\
&\sigma_i \sigma_{\bar 1} = \sigma_{\bar 1} \sigma_i ~~~~~~ \text{ for } i \not= 2 ; \quad 
 \sigma_i \sigma_{\bar {n}} =\sigma_{\bar {n}} \sigma_i \text{ for } i\not=n-1 ; \\
&\sigma_i \sigma_{i+1} \sigma_i = \sigma_{i+1} \sigma_i\sigma_{i+1}  \text{ for } 1\le i \le n-1;\\ 
&\sigma_2 \sigma_{\bar 1} \sigma_2 = \sigma_{\bar 1} \sigma_2\sigma_{\bar 1} ;\quad 
 \sigma_{n-1} \sigma_{\bar {n}} \sigma_{n-1} = \sigma_{\bar {n}} \sigma_{n-1}\sigma_{\bar {n}}.\\ 
\end{aligned}
$$

We order the set of generators $S$ as in the list above, that is:
$$ \sigma_{\bar 1}<\sigma_{1}<   \dots < \sigma_{n-1}<\sigma_{n} <\sigma_{\bar {n}}.$$ Every element of $W(\tilde D_{n+1})$ has accordingly a normal form, that is its unique right lex-min reduced expression relative to that order. We tend to view the order just given as canonical, since it produces the natural chain of parabolic subgroups 
of $W(\tilde D_{n+1})$, the maximal one being $W(  D_{n+1})$ -- the only arbitrary choice is  $ \sigma_{\bar 1}<\sigma_{1}$, in accordance with Stembridge's convention. 
Hence we consider this normal form as canonical. We  
produce below this canonical form  explicitly. 

In  line with   \eqref{StemFactorization} we note that the canonical form of an element $u$ in $W(\tilde D_{n+1})$ is a product 
$[u] x$ where $[u]$ is the canonical form of the minimal length representative of the class 
$u W( D_{n+1})$ and $x$ is the canonical form of an element in $ W(  D_{n+1})$. 
Keeping in mind Lemma \ref{minlength}, $[u]$ either is $1$, or ends with $\sigma_{\bar {n}}$ on the right. 

\begin{definition} \label{ALD}
				We call {\em	 affine length reduced expression} of a given $u$ in $W(\tilde D_{n+1})$ any reduced expression with minimal number of occurrences of $\sigma_{\bar n}$, and we call  {\em  affine length} of $u$ this minimum number, we denote  it by $L(u)$.  
			
			\end{definition}

\begin{lemma} Any right lex-min reduced expression of an element $u$ in $W(\tilde D_{n+1})$ is affine length reduced. 
\end{lemma}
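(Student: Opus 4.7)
The plan is to proceed by induction on $l(u)$, with the trivial base case $u=1$. For the inductive step, let $\mathbf{w}=u_1\cdots u_r$ denote the right lex-min form of $u$, and record two standard facts: the final letter $u_r$ is the smallest element of $\mathscr{R}(u)$ (otherwise any reduced expression ending with a smaller right descent would produce a right-lex-smaller reduced expression of $u$), and the prefix $\mathbf{w}'=u_1\cdots u_{r-1}$ is then the right lex-min form of $u'=u u_r$. By the inductive hypothesis $\mathbf{w}'$ has exactly $L(u')$ occurrences of $\sigma_{\bar n}$, so the task reduces to matching the count of $\sigma_{\bar n}$ in $\mathbf{w}$ with $L(u)$.

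I would then split into two cases according to whether $u_r=\sigma_{\bar n}$. In the case $u_r=\sigma_{\bar n}$, the key point -- and the only place where the particular choice of ordering on $S$ is used -- is that $\sigma_{\bar n}$ is the largest generator, so $u_r$ being the smallest right descent forces $\mathscr{R}(u)=\{\sigma_{\bar n}\}$. Consequently every reduced expression of $u$ ends with $\sigma_{\bar n}$. Stripping this last letter off any affine length reduced expression of $u$ yields a reduced expression of $u'$ with $L(u)-1$ occurrences of $\sigma_{\bar n}$, while appending $\sigma_{\bar n}$ to an affine length reduced expression of $u'$ (the result stays reduced since $\sigma_{\bar n}\notin\mathscr{R}(u')$) gives a reduced expression of $u$ with $L(u')+1$ occurrences. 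The two inequalities sandwich $L(u)=L(u')+1$, matching the count of $\sigma_{\bar n}$ in $\mathbf{w}$.

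In the complementary case $u_r\ne\sigma_{\bar n}$, I would argue as follows. Appending $u_r$ to an affine length reduced expression of $u'$ gives a reduced expression of $u$ (reduced because $l(u)=l(u')+1$) with the same $L(u')$ occurrences of $\sigma_{\bar n}$, so $L(u)\le L(u')$. Conversely, starting from any affine length reduced expression $\mathbf{v}$ of $u$, the Exchange Condition applied to $u_r\in\mathscr{R}(u)$ produces a reduced expression of $u'$ by deleting a single letter from $\mathbf{v}$; this expression has at most $L(u)$ occurrences of $\sigma_{\bar n}$, so $L(u')\le L(u)$. The two inequalities force $L(u)=L(u')$, again matching the count in $\mathbf{w}$.

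Apart from the trivial recursive structure, the argument uses only the Exchange Condition and the fact that the affinizing generator has been declared maximal for the lexicographic order; the only real subtlety lies in Case 1, where maximality of $\sigma_{\bar n}$ is precisely what makes $\mathscr{R}(u)$ a singleton and lets every reduced expression be controlled from the right. The main temptation to resist is tracking Matsumoto braid moves between two reduced expressions with different counts of $\sigma_{\bar n}$, which is unnecessary once the Exchange Condition is invoked to realize a single-letter deletion.
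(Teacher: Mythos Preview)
Your proof is correct. The core insight---that $\sigma_{\bar n}$ being the maximal generator forces $\mathscr{R}(u)=\{\sigma_{\bar n}\}$ whenever the right lex-min form ends in $\sigma_{\bar n}$, so that every reduced expression ends the same way---is exactly the one the paper uses. The two arguments differ in how they organize the induction.

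The paper first invokes the coset factorization $u=[u]\,x$ with $[u]$ the distinguished representative of $uW(D_{n+1})$ and $x\in W(D_{n+1})$: since $x$ contains no $\sigma_{\bar n}$ and the right lex-min form of $u$ is the concatenation of those of $[u]$ and $x$, one reduces in a single stroke to the case $u=[u]$, where the right lex-min form necessarily ends in $\sigma_{\bar n}$. The induction is then on the affine length, stripping one $\sigma_{\bar n}$ at a time. Your proof dispenses with the coset decomposition and instead inducts directly on $l(u)$, handling the non-$\sigma_{\bar n}$ tail letter by letter via the Exchange Condition (your Case~2). This is a little longer but more self-contained: it does not presuppose the canonical factorization of \S\ref{FdC} or the observation that affine length is constant on $W(D_{n+1})$-cosets. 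The paper's version is terser because it amortizes all of your Case~2 into one appeal to structure already set up.
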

\begin{proof} It is enough to show that $[u]$ has a minimal number of occurrences of $\sigma_{\bar n}$. This holds if $[u]$ is $1$, otherwise $[u]$ ends with $\sigma_{\bar {n}}$ on the right and so does any other reduced expression of this element (\S \ref{FdC}) so if any of them had fewer  occurrences of  $\sigma_{\bar n}$, we could simplify $\sigma_{\bar {n}}$ on the right in both expressions, hence the result by induction on the affine length. \end{proof}  

Our first step is to observe elements in $W( D_{n+1}) \sigma_{\bar n}$. Since $\sigma_{\bar n}$ commutes with  every generator but $\sigma_{n-1}$, the elements 
$w \in W( D_{n+1})$ such that $w \sigma_{\bar n}$ is distinguished  are $1$ and  the elements of the set 
$$\mathcal E = \{ w \in W( D_{n+1}) / \  \mathcal R(w)= \{ \sigma_{n-1}\} \}.$$ 
\begin{lemma}\label{setE} The set  $\mathcal E$  is the set of elements of the following canonical forms: 
 \begin{equation}\label{canD1}
 \langle j,n ]  \langle i,n-1] 
  \end{equation}
with $-(n-1)\le i \le n-1$ and $-n \le j \le n+1$, and: 
\begin{itemize}
\item if $ \ 2\le i \le n-1$, then  $ \  j> i $; 
\item  if   $  \  |i| =1$, then $ \  j=-i$ or $ \  j\ge 2$; 
 
\item if $ \  i =0$, then   $ \  j\ge -1$; 
\item if $\   -2\ge i \ge -(n-1)$, then  $ \  j\ge i $. 
\end{itemize}
\end{lemma}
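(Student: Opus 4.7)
The plan is to split the proof into two phases: a structural reduction that pins down the canonical shape of any $w\in\mathcal{E}$, followed by a case analysis that identifies exactly which parameters $(j,i)$ realise $\mathcal{R}(w)=\{\sigma_{n-1}\}$.

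For the \emph{structural reduction}, I would take $w\in\mathcal{E}$ and write it in its canonical form from Theorem~\ref{NFDn}: $w=\langle m_1,n_1]\cdots\langle m_r,n_r]$. This expression being reduced, its last letter must lie in $\mathcal{R}(w)$; and a direct inspection of the definitions shows that $\langle m_r,n_r]$ ends in $\sigma_{n_r}$ when $n_r\geq 2$ and in $\sigma_1$ or $\sigma_{\bar 1}$ when $n_r=1$. Since $\mathcal{R}(w)=\{\sigma_{n-1}\}$ and $n\geq 3$, this forces $n_r=n-1$; the strict inequality $n_{r-1}>n-1$ then forces $n_{r-1}=n$ as soon as $r\geq 2$, so that $r\leq 2$. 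With the convention $\langle n+1,n]=1$ to merge the $r=1$ and $r=2$ cases, I conclude $w=\langle j,n]\langle i,n-1]$ with $-n\leq j\leq n+1$ and $-(n-1)\leq i\leq n-1$.

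For the \emph{case analysis}, I would check which of these pairs $(j,i)$ actually give $\mathcal{R}(w)=\{\sigma_{n-1}\}$. Since the canonical form ends in $\sigma_{n-1}$, the membership $\sigma_{n-1}\in\mathcal{R}(w)$ is free; and the generators $\sigma_k$ for $2\leq k\leq n-3$ cannot lie in $\mathcal{R}(w)$ either, because a reduced expression of $w$ ending in such a $\sigma_k$ would contradict the uniqueness of the canonical form with $n_r=n-1$ and $n_{r-1}\in\{n,n+1\}$. The task therefore reduces to deciding, in each regime of $i$, whether $\sigma_n$, $\sigma_{n-2}$, $\sigma_1$, $\sigma_{\bar 1}$ lie in $\mathcal{R}(w)$. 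The driving constraint is $\sigma_n\notin\mathcal{R}(w)$, and once that holds the other three turn out to cause no trouble.

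Concretely, I would apply braid moves to $w\sigma_n$ in each of the four regimes on $i$. For $2\leq i\leq n-1$, the product $\sigma_j\cdots\sigma_n\sigma_i\cdots\sigma_{n-1}\sigma_n$ admits an overlap-type cancellation via $\sigma_{n-1}\sigma_n\sigma_{n-1}=\sigma_n\sigma_{n-1}\sigma_n$ precisely when $j\leq i$, which is exactly the bad range, yielding the condition $j>i$. For $|i|=1$ the essential identity is the composite braid~\eqref{easyrule} mixing $\sigma_1$ and $\sigma_{\bar 1}$, which produces the extra admissible value $j=-i$ alongside $j\geq 2$. For $i=0$ and $-2\geq i\geq -(n-1)$, the same method applied with both $\sigma_1$ and $\sigma_{\bar 1}$ present in $\langle m,k]$ widens the admissible range of $j$ to $j\geq -1$ and $j\geq i$, respectively.

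The main obstacle is the bookkeeping in the case analysis: the definition of $\langle m,k]$ branches into five sub-shapes according to the sign of $m$, and every combination of a sub-shape for $\langle j,n]$ with one for $\langle i,n-1]$ has to be worked out by hand. The positive regime $i\geq 2$ is painless because the whole calculation stays inside the symmetric-group subalgebra generated by $\sigma_1,\dots,\sigma_n$; the genuinely delicate sub-cases are those where $\sigma_{\bar 1}$ appears in both factors, so that~\eqref{easyrule} is indispensable and the exceptional admissible values emerge naturally from the braid calculus.
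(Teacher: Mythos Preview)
Your proposal is correct and follows essentially the same route as the paper: first reduce to the shape $\langle j,n]\langle i,n-1]$ by reading off $n_r=n-1$ and $n_{r-1}=n$ from the canonical form of Theorem~\ref{NFDn}, then run a case analysis on $(j,i)$ by looking for braid moves that push $\sigma_n$ into the right descent set, invoking \eqref{easyrule} for the negative-index cases. One small simplification: your separate treatment of $\sigma_{n-2},\sigma_1,\sigma_{\bar 1}$ is unnecessary, since $\langle j,n]\langle i,n-1]$ is automatically a distinguished representative of $W(D_{n+1})/W(D_{n-1})$, forcing $\mathscr R(w)\subseteq\{\sigma_{n-1},\sigma_n\}$; this also makes precise your ``uniqueness of canonical form'' argument for the middle generators.
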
 
\begin{proof} We start with the canonical form in  Theorem \ref{NFDn}, in which we must have a $\sigma_{n-1}$ on the right, so elements of $\mathcal E$   have the form 
$ \langle j,n ]  \langle i,n-1] $. Then we proceed case by case, looking for braids. The basic case is  
$ \langle j,n ]  \langle n-1,n-1] $ with $j<n$, that produces the braid 
$\sigma_{n-1}\sigma_{n }\sigma_{n-1}=\sigma_{n }\sigma_{n-1}\sigma_{n }$,   not in $\mathcal E$. In other cases the forbidden values of $j$ are those that produce braids that propagate from left to right until we  get again the braid above. 
For negative values of $   i$ and $j$  we use  rule \eqref{easyrule}  
that lets a $\sigma_2$ free on the right, thus producing a braid with $\sigma_3$ and so on,  up to the  braid with $\sigma_n$. \end{proof} 

We note that $\mathcal E \cup \{1\}$ is the set of distinguished representatives of the quotient of $W( D_{n+1})$ by the parabolic subgroup generated by 
$\{  \sigma_{\bar 1}, \sigma_{1},   \dots,  \sigma_{n-2}, \sigma_{n}   \}$, so 
 the cardinality of $\mathcal E$ is $2n(n+1)-1$. 

\medskip

For the next step we observe   $x=\sigma_{\bar n} w \sigma_{\bar n}$ where $w$ is a reduced expression of an element in 
$  W( D_{n+1})$. If $\sigma_{n-1}$ does not appear in $w$ then $x$ is not reduced, and if $\sigma_{n-1}$   appears only once in $w$ then $x$ is not affine length reduced.

		\begin{definition}\label{Dex}
				An element $u$ in $W(D_{n+1})$ is called $\tilde{D}$-{\rm extremal}  if   $ \sigma_{n-1} $  appears twice at least in any reduced expression for $u$. 
				\end{definition}

		\begin{lemma}\label{DExt}

The  $\tilde{D}$-{\rm extremal} elements in $\mathcal E$ are the elements  of the following canonical forms: 
 \begin{equation}\label{canD2}
 \langle j,n ]  \langle i,n-1] 
  \end{equation}
with $-(n-1)\le i \le n-1$ and $-n \le j \le n+1$, and: 
\begin{itemize}
\item if $ \ 2\le i \le n-1$, then  $ \ n-1 \ge j> i $; 
\item if   $  \  |i| =1$, then $ \  j=-i$ or $ \  n-1 \ge  j\ge 2$; 
\item if $ \ i =0$, then   $ \  n-1 \ge  j\ge -1$; 
\item if $\   -2\ge i \ge -(n-2)$, then  $ \  n-1 \ge  j\ge i $; 
\item if $\    i = -(n-1)$, then  $ \  n+1 \ge  j\ge i $. 
\end{itemize}
\end{lemma}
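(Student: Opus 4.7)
The plan is to reduce $\tilde D$-extremality of $u \in \mathcal E$ to a support computation on the shorter element $u\sigma_{n-1}$, and then to read off the support directly from a canonical reduced expression.

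The first step is a bijective reduction. Since $u \in \mathcal E$ satisfies $\mathcal R(u) = \{\sigma_{n-1}\}$, the last letter of any reduced expression of $u$ must lie in $\mathcal R(u)$ and hence equals $\sigma_{n-1}$. Stripping off this trailing $\sigma_{n-1}$ sets up a bijection between the reduced expressions of $u$ and those of $u\sigma_{n-1}$, under which the multiplicity of $\sigma_{n-1}$ drops by exactly one. Consequently $u$ is $\tilde D$-extremal if and only if every reduced expression of $u\sigma_{n-1}$ contains at least one $\sigma_{n-1}$, which by Matsumoto's theorem (the support of an element is a braid invariant) is just the condition $\sigma_{n-1} \in \Supp(u\sigma_{n-1})$.

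Second, I compute $u\sigma_{n-1}$ explicitly. Writing $u = \langle j,n]\langle i,n-1]$ and dropping the terminal $\sigma_{n-1}$ of $\langle i,n-1]$ produces a reduced expression of $u\sigma_{n-1}$ consisting of $\langle j,n]$ followed by the prefix of $\langle i,n-1]$ obtained by erasing its final $\sigma_{n-1}$. Direct inspection of the bracket expansions shows that $\sigma_{n-1}$ occurs in $\langle j,n]$ exactly when $j \le n-1$ (the exceptions $j \in \{n,n+1\}$ yield $\sigma_n$ and $1$), and occurs in that stripped prefix of $\langle i,n-1]$ exactly when $i = -(n-1)$: the leading descending segment $\sigma_{n-1}\sigma_{n-2}\cdots$ then contributes one, whereas for every other admissible $i \in \{-(n-2),\ldots,n-1\}$ the prefix lies in the subgroup generated by $S \setminus \{\sigma_{n-1}\}$.

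Combining these, $u$ is $\tilde D$-extremal if and only if $j \le n-1$ or $i = -(n-1)$. The final step is to intersect this condition, case by case, with the five ranges of $(i,j)$ delivered by Lemma~\ref{setE}: the row $i = n-1$ is emptied (since $\mathcal E$ requires $j > n-1$ while extremality requires $j \le n-1$), the row $i = -(n-1)$ picks up no new restriction beyond $\mathcal E$'s own $-(n-1) \le j \le n+1$, and each intermediate row simply acquires the upper bound $j \le n-1$. The only Coxeter-theoretic content is in the bijective reduction of the first step; after that the argument is a matter of inspecting two short bracket expansions, and I expect no further obstacle.
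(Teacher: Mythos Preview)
Your argument is correct. The paper states Lemma~\ref{DExt} without proof, and your approach---reducing $\tilde D$-extremality to the support condition $\sigma_{n-1}\in\Supp(u\sigma_{n-1})$ via the bijection between reduced expressions of $u$ and of $u\sigma_{n-1}$, then reading off supports from the bracket expansions---is exactly the natural way to fill it in. The case analysis matches the statement line by line; in particular your observation that the stripped prefix of $\langle i,n-1]$ retains a $\sigma_{n-1}$ only when $i=-(n-1)$ is the key point, and it is correct.
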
  
	 
Now let $w$ be in $W(\tilde D_{n+1})$ with $L(w) =m \ge 2$. 
Fix an affine length  reduced writing of $w$ as follows: 
$$
w =  u_1 \sigma_{\bar n}u_2 \sigma_{\bar n} \dots u_m \sigma_{\bar n} u_{m+1}  $$
where $u_i$, for $1\le i \le m+1$, are   elements  in 
$W(D_{n+1})$.  
Then, as observed above,  $u_2, \dots, u_m$ are $\tilde D$-extremal  elements. Starting from the left, i.e. from $u_1$, we can push on the right of the next $\sigma_{\bar n}$ (on the right) any element that commutes with $ \sigma_{\bar n}$, until we finally get for $u_1$ an element in $\mathcal E \cup \{1\}$, then for $u_2$ a $\tilde{D}$-{\rm extremal} element  in $\mathcal E$, and proceeding from left to right, the same for $u_3$ up to $u_m$, then for all of them we use our previous notation $u_k= \langle j_k,n ]  \langle i_k,n-1] $.

Moreover, $ \  j_1$ can be equal to $n+1$, but for $2\le s \le m$  if we wish to keep 
 $$ u_1 \sigma_{\bar n}u_2 \sigma_{\bar n} \dots u_m \sigma_{\bar n}$$ 
 distinguished we are forced to suppose $j_s< n+1$ with one exception in the special case of $  \  j_1 = n+1,  \  i_1=n,  \  j_2=n+1,  \  i_n=-n $ and $m=2$.
 
To go one last step further and in order to get to distinguished bricks (as it should) the consecutive bricks are related with each other by the following conditions for  ($1\le s \le m-1$) say (**) : 

\begin{itemize}\label{cons}
\item if $j_{s+1}=n+1$ then $s+1=m=2$ and  $ \langle j_1,n]  \langle i_1,n-1] =1$ or $m=1 $ ;
\item if $j_{s+1}=n$ then $i_{s} =-(n-1)$ and special case;
\item if $ \ 2\le j_{s+1} \le n-1$, then  $ \ n-1 \ge i_{s} > j_{s+1} $; 

\item If   $  \  |j_{s+1}| =1$, then $ \  i_{s}=-j_{s+1}$ or $ \  i_{s}\ge 2$;

\item if $ \  j_{s+1}=0$, then   $ \  n-1 \ge  i_{s}\ge -1$; 
\item if $\   -2\ j_{s+1} \ge -(n-1)$, then  $ \  n-1 \ge  i_{s+1}\ge j_{s} $; 
\item if $\    j_{s+1}= -(n)$, then either $s+1 =m=2 $ and $ \langle j_1,n]  \langle i_1,n-1] =1$ or $m=1 $. 

\end{itemize}

This leads to the canonical form given in the following Theorem: 

\begin{theorem}\label{AD}

Let $w$ be in $W(\tilde D_{n+1})$. There exist 
a unique element $x$ in $W(D_{n+1})$, and unique integers  $m\ge 0$,  $i_s, j_s$ for $1\le s \le m$ such that : 

$$
w=\left( \prod_{s=1}^m  (\langle j_s,n]  \langle i_s,n-1] \sigma_{\bar n} )\right) \  x 
$$
where  the right side is   reduced, the pair of integers $(j_1,i_1)$ either is 
$(n+1,n)$ or satisfies the conditions in Lemma \ref{setE}, and,  for $2\le s \le m$,  the pairs of integers 
$(j_s,i_s)$ 
  satisfy the conditions in Lemma \ref{DExt}   and conditions(**). 	
	
The expression $\left( \prod_{s=1}^m  (\langle j_s,n]  \langle i_s,n-1] \sigma_{\bar n} )\right)$ is the {\em affine block} of $w$. For any integers  $m\ge 0$,  $i_s, j_s$ for $1\le s \le m$, satisfying the conditions above, this expression is right lex-min. Plugging  in the canonical form for $x$ given by Theorem \ref{NFDn}, we obtain the canonical form for $w$. 
\end{theorem}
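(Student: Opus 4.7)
The plan is to follow the same pattern as in types $\tilde A$ and $\tilde B$, i.e.\ work coset by coset in the tower
$W(\tilde D_{n+1}) \supset W(D_{n+1})$, exploiting the fact that $\sigma_{\bar n}$ commutes with every generator of $S$ except $\sigma_{n-1}$. Given $w\in W(\tilde D_{n+1})$, first I would separate the $W(D_{n+1})$-part on the right: write $w=[w]\,x$ where $[w]$ is the distinguished representative of $wW(D_{n+1})$ and $x\in W(D_{n+1})$, and apply Theorem~\ref{NFDn} to $x$. This reduces the problem to producing the claimed canonical affine block for an arbitrary distinguished representative $[w]$, and, by Lemma~\ref{minlength} applied to the chain of parabolic subgroups induced by the chosen order on $S$, such a representative is either $1$ or ends on the right with $\sigma_{\bar n}$.

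Next I would prove existence by induction on the affine length $L([w])=m$. Pick an affine length reduced expression
$u_1\sigma_{\bar n}u_2\sigma_{\bar n}\dots u_m\sigma_{\bar n}$, with $u_1,\dots,u_m\in W(D_{n+1})$, and push to the right, past the next $\sigma_{\bar n}$, every factor of $u_s$ that commutes with $\sigma_{\bar n}$. As observed between Definition~\ref{Dex} and Lemma~\ref{DExt}, the minimality of the number of occurrences of $\sigma_{\bar n}$ forces each $u_s$ with $s\ge 2$ to be $\tilde D$-extremal; while $u_1$ must be a distinguished representative of the right coset of $u_1$ modulo the parabolic generated by $\{\sigma_{\bar 1},\sigma_1,\dots,\sigma_{n-2},\sigma_n\}$, i.e.\ $u_1=1$ or $u_1\in\mathcal E$. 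Lemma~\ref{setE} and Lemma~\ref{DExt} then give uniquely the pair $(j_s,i_s)$ for each $s$, and the conditions~(**) are obtained by the same type of argument: whenever they fail, a braid relation $\sigma_{n-1}\sigma_n\sigma_{n-1}=\sigma_n\sigma_{n-1}\sigma_n$, propagated by the rules in \eqref{productsof2Legobricks} and \eqref{easyrule}, moves a $\sigma_{\bar n}$ strictly to the left or removes an occurrence of $\sigma_{\bar n}$, contradicting minimality. The exceptional case $(j_1,i_1)=(n+1,n)$ with $m=2$, $j_2=n+1$, $i_2=-n$ corresponds precisely to the only element of affine length $2$ whose first brick is long in the sense of Section~\ref{affA}.

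For uniqueness and the right-lex-min property, I would appeal to Fokko du Cloux's framework recalled in Section~\ref{FdC}: once existence is known, it is enough to check that any expression of the form $\bigl(\prod_{s=1}^m (\langle j_s,n]\langle i_s,n-1]\sigma_{\bar n})\bigr)x$ satisfying the listed conditions and the conditions of Theorem~\ref{NFDn} for $x$, is lexicographically minimal from right to left in the order $\sigma_{\bar 1}<\sigma_1<\dots<\sigma_{n-1}<\sigma_n<\sigma_{\bar n}$. This is a local check: every potential rewrite that decreases the right-lex-order either breaks one of the inequalities in Lemma~\ref{setE}, Lemma~\ref{DExt}, or~(**), or uses a braid relation already forbidden by the extremality and minimality requirements. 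Uniqueness then follows from uniqueness of the right-lex-min form~\cite{duCloux_X}.

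The main obstacle I anticipate is the verification of the conditions~(**), and in particular the exceptional case involving $j_s=n+1$ or $j_s=-n$: the required case distinction interleaves Lemma~\ref{setE} and Lemma~\ref{DExt}, and one must check that the braids made available by the identity $\sigma_2\sigma_1\sigma_{\bar 1}\sigma_2\sigma_1\sigma_{\bar 1}=\sigma_1\sigma_2\sigma_{\bar 1}\sigma_2\sigma_1\sigma_2$ of~\eqref{easyrule} cannot propagate further to the right than the next $\sigma_{\bar n}$. Apart from this careful bookkeeping at the junction between consecutive affine bricks, everything follows formally from the tools already developed in Section~\ref{basics} and the factorization structure inherited from $W(D_{n+1})$.
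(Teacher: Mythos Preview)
Your proposal is correct in structure and follows essentially the same route as the paper for existence: the paper also establishes existence in the paragraphs preceding the theorem by pushing commuting factors to the right, using Lemmas~\ref{setE} and~\ref{DExt} to pin down each $u_s$ as some $\langle j_s,n]\langle i_s,n-1]$, and deriving conditions~(**) from the requirement that the expression be distinguished.

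Where your argument diverges is in the right-lex-min step. You phrase it as a ``local check'' that any order-decreasing rewrite must violate one of the listed conditions, and then invoke uniqueness of the normal form. The paper instead runs a clean induction on $m$: assuming the block of affine length $m-1$ appended with $\langle j_m,n]\langle i_m,n-1]$ is already right lex-min, one only has to argue that the \emph{rightmost} $\sigma_{\bar n}$ is unmovable. This is immediate from two facts: (i) since $\langle j_m,n]\langle i_m,n-1]\in\mathcal E$, the only simple reflection to its immediate left is $\sigma_{n-1}$; and (ii) since $\langle j_m,n]\langle i_m,n-1]$ is $\tilde D$-extremal, the braid $\sigma_{\bar n}\sigma_{n-1}\sigma_{\bar n}$ cannot be formed. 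This buys a short, self-contained argument with no need to enumerate rewrites, and it avoids the bookkeeping you flag as the main obstacle (the junction between consecutive bricks is handled automatically by the inductive hypothesis together with the canonical factorization \eqref{StemFactorization}). Your approach would work but is less economical; you might consider replacing the ``local check'' paragraph with this unmovability observation.
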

\begin{proof}We proved beforehand the existence of such a form, the uniqueness will be a consequence of the fact that the expression given for the affine block is always right lex-min, which we prove next. For affine length $0$ it is Theorem \ref{NFDn}, for affine length $1$ Lemma \ref{setE} and for affine length $2$ Lemma \ref{DExt}. Assuming it holds up to affine length $m-1$, we know that  
$$\left( \prod_{s=1}^{m-1}  (\langle j_s,n]  \langle i_s,n-1] \sigma_{\bar n} )\right) 
(\langle j_m,n]  \langle i_m,n-1] $$ 
is reduced and right  lex-min. When we multiply  it on the right by $\sigma_{\bar n}$, this occurrence of $\sigma_{\bar n}$ is unmovable: it has $\sigma_{  n-1}$ and only $\sigma_{  n-1}$ on the left because $(\langle j_m,n]  \langle i_m,n-1] $ belongs to $\mathcal E$, and there is no way to produce the braid 
 $\sigma_{\bar n}\sigma_{  n-1} \sigma_{\bar n}$  because $(\langle j_m,n]  \langle i_m,n-1] $ is $\tilde D$-extremal. 
\end{proof} 

\begin{remark}

 Here we can give an alternative proof by noticing that when $j_s < n$ in some $w$, the image of $w$ in $W(\tilde B_{n+1})$, is reduced of affine length $2L(w)$, by substituting,  in the canonical expression of 
$w \in   W(\tilde D_{n+1} )$,  $ t_{n+1} \sigma_{n}  t_{n+1}$ for $\sigma_{\overline{n}}$. That is viewing $W(\tilde D_{n+1} )$ as a reflexion subgroup in $W(\tilde B_{n+1} )$. We choose not to expand for the sake of briefness.  
\end{remark}

\begin{remark}{\rm 
Again by Theorem \ref{lefts} of Fokko du Cloux recalled above, the left multiplication by a generator can be easily described, we leave this to the reader. As for the right multiplication, we see directly  that 
$$\mathscr{R} (x) \subseteq  \mathscr{R} (w) \subseteq  \mathscr{R} (x) \cup  \{ \sigma_{\bar n}  \}  $$
with  $\mathscr{R} (w)=  \mathscr{R} (x)$ if $x$ is $\tilde D$-extremal. 
}  
\end{remark}

\subsection{Faithfulness of the tower of Hecke algebras of type $\tilde D$}

Contrary to the case of type $\tilde B$ (see the end of subsection \ref{affC} and the introduction of \cite{al2019BandD}), we do not  yet know whether the tower of Hecke algebras of type $\tilde D$ is injective on {\em any}  base ring. But we cannot  repeat  for case $\tilde D$ the study made   for type $\tilde A$ in subsection \ref{Arr}, because  the monomorphism $G_n: W(\tilde D_{n}) \longrightarrow W(\tilde D_{n+1})$ from \cite{al2019BandD}[Corollary 2.2], that sends 
$\sigma_i$ to $\sigma_i$ for $i=\bar 1, 1, \cdots n-1$ and sends 
$\sigma_{\overline{n-1}} $ to $\sigma_n \sigma_{n-1} \sigma_{\bar {n}} \sigma_{n-1}\sigma_n   $,   does not satisfy the properties in Theorem \ref{towerandcanonical}: substituting,  in the canonical expression of 
$w \in   W(\tilde D_{n} )$,  $\sigma_n \sigma_{n-1} \sigma_{\bar {n}} \sigma_{n-1}\sigma_n   $ for $\sigma_{\overline{n-1}}$ may not produce a reduced expression. 
For instance, the expression  
$$
 \left( \sigma_n \sigma_{n-1} \sigma_{\bar {n}} \sigma_{n-1}\sigma_n \right)   \sigma_{n-2} \cdots \sigma_2 \sigma_{\bar 1} \sigma_1   \sigma_2  \cdots    \sigma_{n-2} \left( \sigma_n \sigma_{n-1} \sigma_{\bar {n}} \sigma_{n-1}\sigma_n \right)
$$
  is not reduced.    On the other hand  properties in Theorem \ref{towerandcanonical} are rather easy to be checked for elements in which $j_s < n$, so that we can follow the steps of type $\tilde A$, by treating the cases  $ n\le j_s  \le n+1 $ manually.  We will pursue this matter elsewhere,   in more general settings, see injectivity conjecture in  \cite{al2019BandD}.



\appendix

\section{Examples}
We detail   the cases $n=2$ and $n=3$ by applying Theorem \ref{AA}, after a word on $n=1$. 

\subsection{Canonical form in $W(\tilde A_1)$} In this group generated by two simple reflections $\sigma_1$ and $a_2$, we do not need the canonical form theorem, since the group is well known.  Let $w$ be in $W(\tilde{A_{1}})$ with $L(w)>0$, then $w$ is to be written uniquely: 
$$
 w=a_2^{\epsilon}(\sigma_1 a_2)^k \sigma_1^{\lambda},
 $$
where $k \ge 0$ and $\epsilon, \lambda  \in \{ 0, 1\}$, with $ L(w)= k + \epsilon \ne 0$. 

\subsection{Canonical form in  $W(\tilde A_2)$}
The list of elements of positive affine length in   $W(\tilde{A_{2}})$, given in their canonical reduced expression, is the following:\\ 
				
					\begin{figure}[ht]
				\centering
				%
\begin{tikzpicture}
				\begin{scope}[xscale = 1]

  \node[left =-13pt] at (-2.5,1)  {$(h+k\ne0)$ \space $1$};
  \node[left =-13pt] at (-2.5,0)  {$a_{3}$};
  \node[left =-13pt] at (-2.5,-1) {$\sigma_{1} a_{3}$};
  \node[left =-13pt] at (-2.5,-2) {(only for $h=0$) \space $ \sigma_{2} a_{3}$}; 	
	
	\draw (-2,1)  -- (-1,0);
	\draw (-2,0)  -- (-1,0);
	\draw (-2,-1) -- (-1,0);
	\draw (-2,-2) -- (-1,0);
  \node at (1,0)  {$(\sigma_{2}\sigma_{1}a_{3})^{h}(\sigma_{1}\sigma_{2}\sigma_{1}a_{3})^{k}$};

	\draw (3,0)  -- (4.5,-1);
	\draw (3,0)  -- (4.5,0);
	\draw (3,0)  -- (4.5,1);
    \draw (3,0)  -- (4.5,-2);
    \draw (3,0)  -- (4.5,-3);
    \draw (3,0)  -- (4.5,-4);

  \node[right =-13pt] at (5,1)  {$1$};
  \node[right =-13pt] at (5,0)  {$\sigma_{1}$};
  \node[right =-13pt] at (5,-1) {$\sigma_{2}$};
  \node[right =-13pt] at (5,-2) {$\sigma_{1} \sigma_{2}$};
  \node[right =-13pt] at (5,-3) {$\sigma_{2} \sigma_{1}$};
  \node[right =-13pt] at (5,-4) {$\sigma_{1}\sigma_{2} \sigma_{1}$};

\end{scope}
   \end{tikzpicture}
			\end{figure}
			
			Or (and under the assumption that $(h+k\ne0)$ :\\ 
			
					\begin{figure}[ht]
				\centering
				%
\begin{tikzpicture}
				\begin{scope}[xscale = 1]

  \node[left =-13pt] at (-2.5,1)  {$1$};
  \node[left =-13pt] at (-2.5,0)  {$a_{3}$};
  \node[left =-13pt] at (-2.5,-1) {$\sigma_{2} a_{3}$};
  \node[left =-13pt] at (-2.5,-2) {(only for $h=0$) \space $ \sigma_{1} a_{3}$}; 	
	
	\draw (-2,1)  -- (-1,0);
	\draw (-2,0)  -- (-1,0);
	\draw (-2,-1) -- (-1,0);
	\draw (-2,-2) -- (-1,0);
  \node at (1,0)  {$(\sigma_{1}\sigma_{2}a_{3})^{h}(\sigma_{1}\sigma_{2}\sigma_{1}a_{3})^{k}$};

	\draw (3,0)  -- (4.5,-1);
	\draw (3,0)  -- (4.5,0);
	\draw (3,0)  -- (4.5,1);
    \draw (3,0)  -- (4.5,-2);
    \draw (3,0)  -- (4.5,-3);
    \draw (3,0)  -- (4.5,-4);

  \node[right =-13pt] at (5,1)  {$1$};
  \node[right =-13pt] at (5,0)  {$\sigma_{1}$};
  \node[right =-13pt] at (5,-1) {$\sigma_{2}$};
  \node[right =-13pt] at (5,-2) {$\sigma_{1} \sigma_{2}$};
  \node[right =-13pt] at (5,-3) {$\sigma_{2} \sigma_{1}$};
  \node[right =-13pt] at (5,-4) {$\sigma_{1}\sigma_{2} \sigma_{1}$};

\end{scope}
   \end{tikzpicture}
			\end{figure}

\subsection{Canonical form in  $W(\tilde A_3)$}

  Let $w$ be in $W(\tilde{A_{3}})$ with $L(w)>0$. Then there exist integers $k,h,f \ge 0$ and $\epsilon \in \{ 0, 1\}$  such that $w$ is written uniquely as: 

$$ w= \alpha.\mathbf{w_a}.x,$$

\noindent  
reduced, where $x$ is any element in $W(A_3)$ and $\mathbf{w_a}$ is one of the following reduced expressions, representing distinct elements: \\  

\begin{itemize} 

\item $(\sigma_{3}\sigma_{1}a_{4})^{\epsilon}(\sigma_{2}\sigma_{3}\sigma_{1}a_{4})^{f}(\sigma_{1}\sigma_{2}\sigma_{3}\sigma_{1}a_{4})^{h}(\sigma_{1}\sigma_{2}\sigma_{3}\sigma_{2}\sigma_{1}a_{4})^{k} $, where $\alpha$ is subject to:\\

 \begin{itemize}
 
 \item if $\epsilon=1$ then $\alpha \in \{ 1, a_4 \}$;
 \item if $\epsilon =0$ and $f>0$ then $\alpha \in \{ 1, a_4,\sigma_{1}a_{4},\sigma_{3}a_{4} \}$;
 \item if $\epsilon=f =0$ and $h>0$ then $\alpha \in \{ 1, a_4,\sigma_{1}a_{4},\sigma_{3}a_{4}, \sigma_{2}\sigma_{3}a_{4}, \}$;
 \item if $\epsilon=f =h=0$ then $\alpha \in \{ 1, a_4,\sigma_{1}a_{4},\sigma_{3}a_{4}, \sigma_{2}\sigma_{3}a_{4},\sigma_{2}\sigma_{1}a_{4} \}$.\\
 
 \end{itemize}

\item $(\sigma_{3}\sigma_{1}a_{4})^{\epsilon}(\sigma_{2}\sigma_{3}\sigma_{1}a_{4})^{f}(\sigma_{2}\sigma_{3}\sigma_{2}\sigma_{1}a_{4})^{h}(\sigma_{1}\sigma_{2}\sigma_{3}\sigma_{2}\sigma_{1}a_{4})^{k} $, here $h>0$ and:\\

\begin{itemize}
 
 \item if $\epsilon=1$ then $\alpha \in \{ 1, a_4 \}$;
 \item if $\epsilon =0$ and $f>0$ then $\alpha \in \{ 1, a_4,\sigma_{1}a_{4},\sigma_{3}a_{4} \}$;
 \item if $\epsilon=f =0$ then $\alpha \in \{ 1, a_4,\sigma_{1}a_{4},\sigma_{3}a_{4}, \sigma_{2}\sigma_{1}a_{4} \}$.\\
 
 \end{itemize}
 
 \item $(\sigma_{1}\sigma_{2}\sigma_{3}a_{4})^{f}(\sigma_{1}\sigma_{2}\sigma_{3}\sigma_{1}a_{4})^{h}(\sigma_{1}\sigma_{2}\sigma_{3}\sigma_{2}\sigma_{1}a_{4})^{k} $, here $f>0$ and:\\

\begin{itemize}
 
 \item  $\alpha \in \{ 1, a_4, \sigma_{3}a_{4},\sigma_{2}\sigma_{3}a_{4} \}$.\\
  
 \end{itemize}	
\item $(\sigma_{3}\sigma_{2}\sigma_{1}a_{4})^{f}(\sigma_{2}\sigma_{3}\sigma_{2}\sigma_{1}a_{4})^{h}(\sigma_{1}\sigma_{2}\sigma_{3}\sigma_{2}\sigma_{1}a_{4})^{k} $, here $f>0$ and:\\

\begin{itemize}
 
 \item  $\alpha \in \{ 1, a_4, \sigma_{1}a_{4},\sigma_{2}\sigma_{1}a_{4} \}$.
  
 \end{itemize}		
\end{itemize}

 \section*{Acknowledgements}

Partial financial support was received from EPSRC (EP/W007509/1) through the Programme Grant in representation theory at the University of Leeds.

\bibliography{NormalFormABD}
\bibliographystyle{plain}

\end{document}